\documentclass[11pt,reqno]{amsart}
\usepackage[dvipdfm,a4paper,top=30mm,bottom=25mm,left=25mm,right=25mm]{geometry}
\usepackage{amsmath,amssymb,amsthm,mathtools}
\usepackage{comment,mathrsfs,bm}

\usepackage[colorlinks=true,linkcolor=blue,citecolor=blue]{hyperref}

\usepackage{graphicx}


\newtheorem{theorem}{Theorem}[section]
\newtheorem{lemma}[theorem]{Lemma}
\newtheorem{proposition}[theorem]{Proposition}

\theoremstyle{definition}
\newtheorem{definition}[theorem]{Definition}

\theoremstyle{remark}

\numberwithin{equation}{section}

\allowdisplaybreaks[4]

\def\rnum#1{\expandafter{\romannumeral #1}} 
\def\Rnum#1{\uppercase\expandafter{\romannumeral #1}}

\def\~#1{\widetilde #1}

\def\({\left(}
\def\){\right)}
\def\<{\langle}
\def\>{\rangle}

\begin{document}

\title[nonlinear Klein-Gordon equation with an inverse-square potential]{Stability and instability of radial standing waves to NLKG equation with an inverse-square potential}


\author{Masaru Hamano}
\address{Department of Mathematics, Graduate School of Science and Engineering Saitama University, Shimo-Okubo 255, Sakura-ku, Saitama-shi, Saitama 338-8570, Japan}
\email{ess70116@mail.saitama-u.ac.jp}

\author{Masahiro Ikeda}
\address{Department of Mathematics, Faculty of Science and Technology, Keio University, 3-14-1 Hiyoshi, Kohoku-ku, Yokohama, 223-8522, Japan/Center for Advanced Intelligence Project, Riken, Japan}
\email{masahiro.ikeda@riken.jp / masahiro.ikeda@keio.jp}







\begin{abstract}
In this paper, we consider radial standing waves to a nonlinear Klein-Gordon equation with a repulsive inverse-square potential.
It is known that existence of a ``radial'' ground state to the stationary problem of the nonlinear Klein-Gordon equation.
Here, the ``radial'' ground state is a solution with the least energy among radial solutions to the stationary problem.
We deal with stability and instability of the standing wave for the ``radial'' ground state.
\end{abstract}

\maketitle

\tableofcontents


\section{Introduction}

\subsection{Nonlinear Klein-Gordon equation}

We consider the following nonlinear Klein-Gordon equation with the inverse-square potential:
\begin{equation}
	- \partial_t^2 u + \Delta_\gamma u - u
		= - |u|^{p-1}u,\quad (t,x) \in [0,\infty) \times \mathbb{R}^d, \tag{NLKG$_\gamma$} \label{NLKG}
\end{equation}
where $1 < p \leq 1 + \frac{4}{d-2}$, $d \geq 3$, $\Delta_\gamma = \Delta - \frac{\gamma}{|x|^2}$, $\gamma > - (\frac{d-2}{2})^2$, and $\gamma \neq 0$.
An unknown function $u(t,x) : [0,\infty) \times \mathbb{R}^d \longrightarrow \mathbb{C}$ is a solution to \eqref{NLKG}.
In particular, we deal with the Cauchy problem of \eqref{NLKG} with initial data
\begin{equation}
	(u(0,x),\partial_tu(0,x)) = (u_0,u_1), \quad x \in \mathbb{R}^d. \tag{IC} \label{IC}
\end{equation}
Let $- \Delta_\gamma^0$ be the natural action of $- \Delta + \frac{\gamma}{|x|^2}$ on $C_c^\infty(\mathbb{R}^d \setminus \{0\})$, where $C_c^\infty(\mathbb{R}^d \setminus \{0\})$ is a set of functions with a compact support on $\mathbb{R}^d \setminus \{0\}$.
For $\gamma > - (\frac{d-2}{2})^2$, $- \Delta_\gamma^0$ is a positive semi-definite symmetric operator.
Indeed, we have
\begin{align*}
	\<- \Delta_\gamma^0 f,f\>_{L^2}
		= \int_{\mathbb{R}^d}|\nabla f(x)|^2 + \frac{\gamma}{|x|^2}|f(x)|^2dx
		= \int_{\mathbb{R}^d}\left|\nabla f(x) + \frac{\rho}{|x|^2}xf(x)\right|^2dx
\end{align*}
for any $f \in C_c^\infty(\mathbb{R}^d \setminus \{0\})$, where
\begin{align}
	\rho
		:= \frac{d-2}{2} - \left\{\left(\frac{d-2}{2}\right)^2 + \gamma\right\}^\frac{1}{2}. \label{141}
\end{align}
We consider the self-adjoint extension of $- \Delta_\gamma^0$.
It is known that the extension is not unique in the range $- (\frac{d-2}{2})^2 < \gamma < - (\frac{d-2}{2})^2 + 1$ (see \cite{KalSchWalWus75}).
In this case, we choose $- \Delta_\gamma$ as Friedrichs extension among possible extensions.
Since $- \Delta_\gamma$ and $1 - \Delta_\gamma$ are non-negative operators, $(- \Delta_\gamma)^\frac{1}{2}$ and $(1 - \Delta_\gamma)^\frac{1}{2}$ are well-defined on the domain
\begin{align*}
	H_\gamma^1(\mathbb{R}^d)
		:= \{f \in H^1(\mathbb{R}^d) : \|f\|_{H_\gamma^1} < \infty\}
\end{align*}
with the norm $\|f\|_{H_\gamma^1}^2 := \|(1 - \Delta_\gamma)^\frac{1}{2}f\|_{L^2}^2 = \|f\|_{L^2}^2 + \|(- \Delta_\gamma)^\frac{1}{2}f\|_{L^2}^2$.
We note
\begin{align*}
	\|(- \Delta_\gamma)^\frac{1}{2}f\|_{L^2}^2
		= \|\nabla f\|_{L^2}^2 + \int_{\mathbb{R}^d}\frac{\gamma}{|x|^2}|f(x)|^2dx.
\end{align*}
Here, we define solutions to \eqref{NLKG} with \eqref{IC} clearly.

\begin{definition}[Solution]
Let $I \subset [0,\infty)$ be a nonempty time interval.
We say that a function $u : I \times \mathbb{R}^d \longrightarrow \mathbb{C}$ is a solution to \eqref{NLKG} with \eqref{IC} on $I$ if $(u,\partial_tu) \in (C_t \cap L_{t,\text{loc}}^\infty)(I;H_x^1(\mathbb{R}^d) \times L_x^2(\mathbb{R}^d))$, there exists $J \in \mathbb{N} \cup \{0\}$ such that $u \in \cap_{j=1}^J L_{t,\text{loc}}^{q_j}(I;W_x^{s,r_j}(\mathbb{R}^d))$ (, where we ignore this condition when $J = 0$), and the Duhamel formula
\begin{align*}
	u(t)
		= \partial_tK_\gamma(t)u_0(x) + K_\gamma(t)u_1(x) - \int_0^tK_\gamma(t-s)(|u|^{p-1}u)(s)ds
\end{align*}
holds, where $(q_j,r_j)$ is an admissible pair and belongs to $\Lambda_{1-s,0,\frac{(d-2)^2}{4}}$ with $0 \leq s \leq 1$ (see Definition \ref{Admissible pair} for the definition) and the operators $K_\gamma(t)$ and $\partial_tK_\gamma(t)$ for each $t \geq 0$ are defined as
\begin{align*}
	K_\gamma(t)
		:= \frac{\sin\{(1-\Delta_\gamma)^\frac{1}{2}t\}}{(1-\Delta_\gamma)^\frac{1}{2}},\ \ \ \ 
	\partial_tK_\gamma(t)
		:= \cos\{(1-\Delta_\gamma)^\frac{1}{2}t\}.
\end{align*}
\end{definition}

It is proved for $\gamma > 0$ in \cite{Din18, KilMurVisZhe17} that \eqref{NLKG} has the non-trivial standing wave $u(t,x) = e^{i\omega t}Q_{\omega,\gamma}(x)$ for $1 - \omega^2 > 0$, where $Q_{\omega,\gamma}$ satisfies the following elliptic equation:
\begin{align}
	- (1 - \omega^2)Q_{\omega,\gamma} + \Delta_\gamma Q_{\omega,\gamma}
		= - |Q_{\omega,\gamma}|^{p-1}Q_{\omega,\gamma}. \label{SP} \tag{SP$_{\omega,\gamma}$}
\end{align}
In particular, \eqref{SP} has a ``radial'' ground state, where a set $\mathcal{G}_{\omega,\gamma,\text{rad}}$ of a whole of ``radial'' ground state is defined as
\begin{align*}
	\mathcal{G}_{\omega,\gamma,\text{rad}}
		& := \{\phi \in \mathcal{A}_{\omega,\gamma,\text{rad}} : S_{\omega,\gamma}(\phi) \leq S_{\omega,\gamma}(\psi)\text{ for any }\psi \in \mathcal{A}_{\omega,\gamma,\text{rad}}\}, \\
	\mathcal{A}_{\omega,\gamma,\text{rad}}
		& := \{\phi \in H_\text{rad}^1(\mathbb{R}^d) \setminus \{0\} : S_{\omega,\gamma}'(\phi) = 0\}, \\
	S_{\omega,\gamma}(\phi)
		& := \frac{1-\omega^2}{2}\|\phi\|_{L^2}^2 + \frac{1}{2}\|(-\Delta_\gamma)^\frac{1}{2}\phi\|_{L^2}^2 - \frac{1}{p+1}\|\phi\|_{L^{p+1}}^{p+1}.
\end{align*}
Unlike the case $\gamma > 0$, it is known for $- (\frac{d-2}{2})^2 < \gamma < 0$ in \cite{Din18, KilMurVisZhe17} and $\gamma = 0$ in \cite{BerLio83, Str77} that \eqref{SP} has the ground state in the usual sense, where a set $\mathcal{G}_{\omega,\gamma}$ of a whole of ground state is defined as
\begin{align*}
	\mathcal{G}_{\omega,\gamma}
		& := \{\phi \in \mathcal{A}_{\omega,\gamma} : S_{\omega,\gamma}(\phi) \leq S_{\omega,\gamma}(\psi)\text{ for any }\psi \in \mathcal{A}_{\omega,\gamma}\}, \\
	\mathcal{A}_{\omega,\gamma}
		& := \{\phi \in H^1(\mathbb{R}^d) \setminus \{0\} : S_{\omega,\gamma}'(\phi) = 0\}.
\end{align*}
When $\gamma = 0$, the ground state is characterized by a functional $K_{\omega,\gamma}^{\alpha,\beta}$ with suitable exponent $(\alpha,\beta)$ as follows: If $Q_{\omega,0}$ is the ground state to \eqref{SP} with $\gamma = 0$, then $Q_{\omega,0} \in \mathcal{M}_{\omega,\gamma}^{\alpha,\beta}$ holds for some $(\alpha,\beta)$, where
\begin{align*}
	K_{\omega,\gamma}^{\alpha,\beta}(f)
		& := \left.\frac{d}{d\lambda}\right|_{\lambda = 0}S_{\omega,\gamma}(e^{\alpha\lambda}f(e^{\beta\lambda}\,\cdot\,)), \\
	\mathcal{M}_{\omega,\gamma}^{\alpha,\beta}
		& := \{\phi \in H^1(\mathbb{R}^d) \setminus \{0\} : S_{\omega,\gamma}(\phi) = n_{\omega,\gamma}^{\alpha,\beta},\,K_{\omega,\gamma}^{\alpha,\beta}(\phi) = 0\}, \\
	n_{\omega,\gamma}^{\alpha,\beta}
		& := \inf\{S_{\omega,\gamma}(\phi) : \phi \in H^1(\mathbb{R}^d) \setminus \{0\},\,K_{\omega,\gamma}^{\alpha,\,\beta}(\phi) = 0\}.
\end{align*}
By using the characterization, Shatah \cite{Sha83} showed stability of the set $\mathcal{G}_{\omega,0}$ and Ohta--Todorova \cite{OhtTod07} proved its very strong instability (see also \cite{BerCaz81, LiuOhtTod07, OhtTod05, Sha85, ShaStr85}), where very strong instability of $\mathcal{G}_{\omega,\gamma}$ and stability of $\mathcal{G}_{\omega,\gamma}$ are defined respectively as suitable rewriting of Definition \ref{Definition of stability} below.

\begin{theorem}[Stability versus very strong instability with $\gamma = 0$]\label{Stability of instability of standing waves with gamma=0}
Let $d \geq 2$, $1 < p < \infty$ if $d = 2$, $1 < p < 1 + \frac{4}{d-2}$ if $d \geq 3$, $\gamma = 0$, $1 - \omega^2 > 0$, and $\omega_c := \sqrt{\frac{p-1}{4 - (d-1)(p-1)}}$.
\begin{itemize}
\item (Stability of $\mathcal{G}_{\omega,0}$, \cite{Sha83})
Let $d \geq 3$, $1 < p < 1 + \frac{4}{d}$, and $\omega_c < |\omega| < 1$.
Then, $\mathcal{G}_{\omega,0}$ is stable.
\item (Very strong instability of $\mathcal{G}_{\omega,0}$, \cite{OhtTod07})
We assume $|\omega| \leq \omega_c$ if $1 < p < 1 + \frac{4}{d}$.
Then, $\mathcal{G}_{\omega,0}$ is very strongly unstable.
\item (Very strong instability of $\mathcal{A}_{\omega,0}$, \cite{OhtTod07})
Let $1 < p < 1 + \frac{4}{d}$ and $|\omega| = \omega_c$.
Then, $\mathcal{A}_{\omega,0}$ is very strong unstable.
\end{itemize}
\end{theorem}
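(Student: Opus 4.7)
\textbf{Strategy.} All three bullets rest on the variational characterization of ground states through the scaled functional $K_{\omega,0}^{\alpha,\beta}$ combined with the Lyapunov functional $E(u,\partial_tu) - \omega Q(u,\partial_tu)$, where $E$ is the conserved energy and $Q$ the conserved charge of \eqref{NLKG} with $\gamma = 0$. A direct computation gives the key identity
\begin{align*}
	E(u,\partial_tu) - \omega Q(u,\partial_tu) = S_{\omega,0}(u) + \tfrac{1}{2}\|\partial_tu - i\omega u\|_{L^2}^2,
\end{align*}
so that $(u_0,u_1)$ close to $(Q_{\omega,0},i\omega Q_{\omega,0})$ in $H^1 \times L^2$ pins $S_{\omega,0}(u(t))$ near $n_{\omega,0}^{\alpha,\beta}$ for every $t$; this in turn makes it possible either to run a minimizing-sequence argument (for stability) or to propagate the sign of $K_{\omega,0}^{\alpha,\beta}(u(t))$ throughout the life span (for instability).

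\textbf{First bullet (stability).} I would argue by contradiction. If $\mathcal{G}_{\omega,0}$ were unstable there would exist radial data $(u_0^{(n)}, u_1^{(n)}) \to (Q_{\omega,0}, i\omega Q_{\omega,0})$ in $H^1 \times L^2$ and times $t_n$ along which $u^{(n)}(t_n)$ leaves an $\varepsilon$-neighborhood of $\mathcal{G}_{\omega,0}$. The Lyapunov identity above forces $S_{\omega,0}(u^{(n)}(t_n)) \to n_{\omega,0}^{\alpha,\beta}$, and choosing rescalings $\lambda_n \to 1$ so that $K_{\omega,0}^{\alpha,\beta}(\lambda_n u^{(n)}(t_n)) = 0$ turns $\{\lambda_n u^{(n)}(t_n)\}$ into a minimizing sequence for the constrained problem defining $n_{\omega,0}^{\alpha,\beta}$. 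Radiality enters through the compact embedding $H_{\text{rad}}^1(\mathbb{R}^d) \hookrightarrow L^{p+1}(\mathbb{R}^d)$ (valid since $p+1 < \tfrac{2d}{d-2}$), producing an $H^1$-strong subsequential limit in $\mathcal{G}_{\omega,0}$, which is the desired contradiction. The hypothesis $\omega_c<|\omega|<1$ is exactly what yields $d''(\omega) > 0$ for $d(\omega) := S_{\omega,0}(Q_{\omega,0})$, making the constrained minimization nondegenerate so that indeed $\lambda_n \to 1$.

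\textbf{Second and third bullets (very strong instability).} Following the strategy of Ohta--Todorova, I would pick the exponents $(\alpha,\beta)$ so that the scaling $\phi \mapsto e^{\alpha\lambda}\phi(e^{\beta\lambda}\cdot)$ satisfies
\begin{align*}
	\tfrac{d}{d\lambda}\Big|_{\lambda=0}S_{\omega,0}(e^{\alpha\lambda}Q_{\omega,0}(e^{\beta\lambda}\cdot)) = 0,\qquad \tfrac{d^2}{d\lambda^2}\Big|_{\lambda=0}S_{\omega,0}(e^{\alpha\lambda}Q_{\omega,0}(e^{\beta\lambda}\cdot)) < 0,
\end{align*}
which is possible precisely when $|\omega|\leq\omega_c$. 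Perturbing $Q_{\omega,0}$ in this direction produces initial data close to $(Q_{\omega,0}, i\omega Q_{\omega,0})$ in $H^1 \times L^2$ but with $S_{\omega,0}(u_0) < n_{\omega,0}^{\alpha,\beta}$ and $K_{\omega,0}^{\alpha,\beta}(u_0) < 0$. The Lyapunov identity together with a continuity argument propagates $K_{\omega,0}^{\alpha,\beta}(u(t)) < 0$ throughout the life span, and a Levine-type convexity computation on $J(t):=\|u(t)\|_{L^2}^2$ combined with conservation of energy gives $J''(t) \leq -c < 0$, forcing blow-up in finite time and hence very strong instability. The third bullet is treated within the same framework; the subtlety at $|\omega|=\omega_c$ is that one must work with the larger set $\mathcal{A}_{\omega,0}$ to exploit additional modulation degrees of freedom compensating for the degeneracy of the second variation.

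\textbf{Main obstacle.} The delicate point is the boundary case $|\omega| = \omega_c$: the second variation along the pure scaling direction vanishes, so a first-order perturbation of $Q_{\omega,0}$ no longer places the datum strictly on the instability side of the threshold. One therefore needs an additional direction in the tangent space at $Q_{\omega,0}$ (coming from the richer family $\mathcal{A}_{\omega,0}$ of bound states) along which $K_{\omega,0}^{\alpha,\beta}$ becomes strictly negative while the Lyapunov identity still keeps $S_{\omega,0}(u(t))$ controlled. Establishing the existence of such a direction, verifying the finite-variance hypothesis required by the convexity argument on $J$, and carefully tracking the asymptotics of $Q_{\omega,0}$ as $|\omega|\uparrow\omega_c$ are the technical hurdles that concentrate all the difficulty of Theorem \ref{Stability of instability of standing waves with gamma=0}.
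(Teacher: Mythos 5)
This statement is not proved in the paper: it is quoted from Shatah \cite{Sha83} and Ohta--Todorova \cite{OhtTod07}, and the paper's own proofs (Sections~\ref{Proof of stability}--\ref{Proof of very strong instability}) concern the $\gamma>0$, radial analogue in Theorem~\ref{Instability versus stability}. Your sketch captures the common architecture — the Lyapunov identity $E-\omega C=S_{\omega,0}(u)+\tfrac12\|\partial_tu-i\omega u\|_{L^2}^2$, the virial functional $K_{\omega,0}^{\alpha,\beta}$, and the propagation of the sign of $K$ under the flow — but the blow-up part contains genuine errors. First, the asserted inequality $J''(t)\leq -c<0$ is simply the wrong sign; when $K_{\omega,0}^{\alpha,\beta}(u(t))<0$ one obtains a \emph{lower} bound $f''(t)\geq c>0$ for $f(t)=\|u(t)\|_{L^2}^2$ (cf.\ identity \eqref{118}), and a naive convexity argument on $f$ does not by itself yield blow-up. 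Second, the ``finite-variance hypothesis'' you flag as a hurdle is precisely what the Ohta--Todorova argument (and the paper's $\gamma>0$ version in Section~\ref{Proof of very strong instability}) is designed to avoid: one works with the \emph{localized} virial quantities $I_R^1, I_R^2$ built from the cut-offs $\Phi_R$, $\Psi_R$, uses the radial Sobolev inequality (Lemma~\ref{Radial Sobolev inequality}) to control the tails in $\|u\|_{L^{p+1}(|x|\geq R)}$, and then invokes ``global implies bounded'' (the $\gamma=0$ analogue of Theorem~\ref{Global implies boundedness}) to upgrade unboundedness to finite-time blow-up. Without this localization the argument collapses in the generic ($1+\tfrac{4}{d}\leq p$, or data not of finite variance) case.

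Two further points. Your treatment of $|\omega|=\omega_c$ posits ``additional modulation degrees of freedom'' from the richer set $\mathcal{A}_{\omega,0}$, but that is not how the proof proceeds: as in Section~8.1 of the paper, one exploits that \emph{every} $Q\in\mathcal{A}_{\omega_c,\gamma,\text{rad}}$ satisfies $K_{\omega_c,\gamma}^{2,p-1}(Q)=0$, and the particular positive constant $\delta=-q\omega_c C-(q+2)(E+\omega_c C)$ evaluated on $(\lambda Q,\lambda i\omega_c Q)$ with $\lambda>1$ is shown strictly positive directly from $K_{\omega_c,\gamma}^{2,p-1}(Q)=0$; no extra direction in the tangent space is introduced. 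Finally, in the stability bullet you rely on the compact embedding $H_{\text{rad}}^1\hookrightarrow L^{p+1}$, but for $\gamma=0$ Theorem~\ref{Stability of instability of standing waves with gamma=0} is about $\mathcal{G}_{\omega,0}$ in the full space $H^1\times L^2$, where translations are symmetries and compactness fails; Shatah's original argument (and its modern concentration-compactness phrasing) must modulate out translations before a minimizing sequence can be made to converge. The radial compactness route only becomes directly available in the paper's own $\gamma>0$ setting, where the absence of translation invariance permits working in $H^1_{\text{rad}}$ from the start.
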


In this paper, we consider a similar problem to Theorem \ref{Stability of instability of standing waves with gamma=0} for \eqref{NLKG} with $\gamma > 0$.
We note that $n_{\omega,\gamma}^{\alpha,\beta}$ does not have a minimizer for $\gamma > 0$ since we can not use the Schwarz symmetrization.
Therefore, we do not know \eqref{SP} has the ground state or not at the moment and hence, we restrict to radial functions.
That is, we use the minimization problem
\begin{align}
	r_{\omega,\gamma}^{\alpha,\,\beta}
		:= \inf\{S_{\omega,\gamma}(\phi) : \phi \in H_\text{rad}^1(\mathbb{R}^d) \setminus \{0\},\,K_{\omega,\gamma}^{\alpha,\,\beta}(\phi) = 0\} \label{146}
\end{align}
and prove that $\mathcal{G}_{\omega,\gamma,\,\text{rad}} \subset \mathcal{M}_{\omega,\gamma,\,\text{rad}}^{\alpha,\,\beta}$ for suitable $(\alpha,\beta)$, where
\begin{align*}
	\mathcal{M}_{\omega,\gamma,\,\text{rad}}^{\alpha,\,\beta}
		:= \{\phi \in H_\text{rad}^1(\mathbb{R}^d) \setminus \{0\} : S_{\omega,\gamma}(\phi) = r_{\omega,\gamma}^{\alpha,\,\beta},\,K_{\omega,\gamma}^{\alpha,\,\beta}(\phi) = 0\}.
\end{align*}
We define stability, instability, strong instability, and very strong instability of $\mathcal{G}_{\omega,\gamma,\text{rad}}$ and $\mathcal{A}_{\omega,\gamma,\text{rad}}$.

\begin{definition}\label{Definition of stability}
Let $\mathcal{H}_{\omega,\gamma,\text{rad}}$ denote $\mathcal{G}_{\omega,\gamma,\text{rad}}$ or $\mathcal{A}_{\omega,\gamma,\text{rad}}$.
\begin{itemize}
\item (Stability)
We say that $\mathcal{H}_{\omega,\gamma,\text{rad}}$ is stable in $H_\text{rad}^1(\mathbb{R}^d) \times L_\text{rad}^2(\mathbb{R}^d)$ when for any $\varepsilon > 0$, there exists $\delta > 0$ such that if $(u_0,u_1) \in H_\text{rad}^1(\mathbb{R}^d) \times L_\text{rad}^2(\mathbb{R}^d)$ satisfies
\begin{align}
	\inf_{Q_{\omega,\gamma} \in \mathcal{H}_{\omega,\gamma,\text{rad}}}\|(u_0,u_1) - (Q_{\omega,\gamma},i\omega Q_{\omega,\gamma})\|_{H^1 \times L^2}
		< \delta, \label{127}
\end{align}
then the solution $u$ to \eqref{NLKG} with initial data $(u_0,u_1)$ exists globally in time and satisfies
\begin{align*}
	\sup_{t \in [0,\infty)} \inf_{Q_{\omega,\gamma} \in \mathcal{H}_{\omega,\gamma,\text{rad}}} \|(u(t),\partial_tu(t)) - (Q_{\omega,\gamma},i\omega Q_{\omega,\gamma})\|_{H_x^1 \times L_x^2}
		< \varepsilon.
\end{align*}
\item (Instability)
If $\mathcal{H}_{\omega,\gamma,\text{rad}}$ is not stable in $H_\text{rad}^1(\mathbb{R}^d) \times L_\text{rad}^2(\mathbb{R}^d)$, then we say that $\mathcal{H}_{\omega,\gamma,\text{rad}}$ is unstable in $H_\text{rad}^1(\mathbb{R}^d) \times L_\text{rad}^2(\mathbb{R}^d)$.
\item (Strong instability)
We say that $\mathcal{H}_{\omega,\gamma,\text{rad}}$ is strongly unstable in $H_\text{rad}^1(\mathbb{R}^d) \times L_\text{rad}^2(\mathbb{R}^d)$ if for any $\delta > 0$, there exists $(u_0,u_1) \in H_\text{rad}^1(\mathbb{R}^d) \times L_\text{rad}^2(\mathbb{R}^d)$ such that $(u_0,u_1)$ satisfies \eqref{127} and the solution $u$ to \eqref{NLKG} with initial data $(u_0,u_1)$ blows up in finite time or ``exists globally in time and satisfies $\limsup_{t \rightarrow \infty}\|(u(t),\partial_tu(t))\|_{H_x^1 \times L_x^2} = \infty$''.
\item (Very strong instability)
We say that $\mathcal{H}_{\omega,\gamma,\text{rad}}$ is very strongly unstable in $H_\text{rad}^1(\mathbb{R}^d) \times L_\text{rad}^2(\mathbb{R}^d)$ if for any $\delta > 0$, there exists $(u_0,u_1) \in H_\text{rad}^1(\mathbb{R}^d) \times L_\text{rad}^2(\mathbb{R}^d)$ such that $(u_0,u_1)$ satisfies \eqref{127} and the solution $u$ to \eqref{NLKG} with initial data $(u_0,u_1)$ blows up in finite time.
\end{itemize}
\end{definition}

From the definitions, we see that if $\mathcal{H}_{\omega,\gamma,\text{rad}}$ is unstable, then $\mathcal{H}_{\omega,\gamma,\text{rad}}$ is strongly unstable and if $\mathcal{H}_{\omega,\gamma,\text{rad}}$ is strongly unstable, then $\mathcal{H}_{\omega,\gamma,\text{rad}}$ is very strongly unstable.
It also follows that if $\mathcal{H}_{\omega,\gamma,\text{rad}}$ is (very) strongly unstable in $H_\text{rad}^1(\mathbb{R}^d) \times L_\text{rad}^2(\mathbb{R}^d)$, then $\mathcal{H}_{\omega,\gamma,\text{rad}}$ is (very) strongly unstable in $H^1(\mathbb{R}^d) \times L^2(\mathbb{R}^d)$.

\subsection{Main result}

First, we state the following local well-posedness result of \eqref{NLKG}.

\begin{theorem}[Local well-posedness of \eqref{NLKG}]\label{Local well-posedness}
Let $d \geq 3$, $p \in (1,1 + \frac{2}{d-2}] \cup [1 + \frac{4}{d+1},1 + \frac{4}{d-2}]$, and $\gamma > 0$.
Then, for any $(u_0,u_1) \in H^1(\mathbb{R}^d) \times L^2(\mathbb{R}^d)$, there exists $T_\text{max} \in (0,\infty]$ such that \eqref{NLKG} with \eqref{IC} has a unique solution $(u,\partial_tu) \in C_t([0,T_\text{max}) ; H_x^1(\mathbb{R}^d) \times L_x^2(\mathbb{R}^d))$.
For each compact interval $I \subset [0,T_\text{max})$, the mapping $H^1(\mathbb{R}^d) \times L^2(\mathbb{R}^d) \ni (u_0,u_1) \mapsto (u,\partial_tu) \in C_t(I;H_x^1(\mathbb{R}^d) \times L_x^2(\mathbb{R}^d))$ is continuous.
Moreover, the solution $(u,\partial_tu)$ has the following blow-up alternative for $p \neq 1 + \frac{4}{d-2}$:
If $T_\text{max} < \infty$, then
\begin{align*}
	\lim_{t \nearrow T_\text{max}}\|(u(t),\partial_tu(t))\|_{H_x^1 \times L_x^2}
		= \infty.
\end{align*}
Furthermore, the solution $u$ preserves its charge, energy, and momentum with respect to time $t$, where they are defined as follows:
\begin{align*}
	\text{(Charge) }&\ \ C[u(t),\partial_tu(t)]
		:= \text{Im}\int_{\mathbb{R}^d} u(t,x)\overline{\partial_tu(t,x)}dx, \\
	\text{(Energy) }&\ \ E_\gamma[u(t),\partial_tu(t)]
		:= \frac{1}{2}\|(-\Delta_\gamma)^\frac{1}{2}u(t)\|_{L_x^2}^2 + \frac{1}{2}\|u(t)\|_{L_x^2}^2 \\
		& \hspace{5.0cm} - \frac{1}{p+1}\|u(t)\|_{L_x^{p+1}}^{p+1} + \frac{1}{2}\|\partial_tu(t)\|_{L_x^2}^2, \\
	\text{(Momentum) }&\ \ M[u(t),\partial_tu(t)]
		:= \text{Re}\int_{\mathbb{R}^d}\nabla u(t,x)\overline{\partial_tu(t,x)}dx.
\end{align*}
\end{theorem}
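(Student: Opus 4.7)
The plan is to run a Banach fixed point argument on the Duhamel map
$$\Phi(u)(t) := \partial_tK_\gamma(t)u_0 + K_\gamma(t)u_1 - \int_0^tK_\gamma(t-s)(|u|^{p-1}u)(s)\,ds,$$
with the ambient space and estimates tailored to each of the two sub-ranges of $p$. In the lower sub-range $p \in (1, 1 + \frac{2}{d-2}]$ the nonlinearity $N(u):=|u|^{p-1}u$ is locally Lipschitz from $H^1(\mathbb{R}^d)$ into $L^2(\mathbb{R}^d)$ via the Sobolev embedding $H^1 \hookrightarrow L^{2p}$, which holds because $2p \leq \frac{2d}{d-2}$. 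Combined with the energy-type estimate $\|(K_\gamma(t)f, \partial_t K_\gamma(t)f)\|_{H^1 \times L^2} \lesssim \|f\|_{L^2} + \|f\|_{H^1}$ for the linear propagator, which rests on functional calculus for $1-\Delta_\gamma$ and the norm equivalence $\|\cdot\|_{H_\gamma^1} \sim \|\cdot\|_{H^1}$ valid for $\gamma > 0$, the map $\Phi$ is readily shown to be a contraction on a suitable ball of $C([0,T];H^1) \cap C^1([0,T];L^2)$ for $T$ small depending only on the norm of the data.

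In the upper sub-range $p \in [1 + \frac{4}{d+1}, 1 + \frac{4}{d-2}]$, the purely energy-based Lipschitz argument fails and one resorts to Strichartz estimates for $K_\gamma$ on admissible pairs $(q,r) \in \Lambda_{1-s,0,(d-2)^2/4}$ available from the cited literature. Choose a scale-compatible pair $(q,r)$ so that Hölder's inequality and Sobolev embedding yield a nonlinear bound
$$\|N(u) - N(v)\|_{L^{q'}_t L^{r'}_x} \lesssim \bigl(\|u\|_{L^q_t W^{s,r}_x}^{p-1} + \|v\|_{L^q_t W^{s,r}_x}^{p-1}\bigr)\|u-v\|_{L^q_t W^{s,r}_x},$$
so that $\Phi$ contracts on a ball in $C([0,T];H^1\times L^2) \cap L^q([0,T];W^{s,r})$. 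For strictly sub-critical $p$ the existence time $T$ is determined by $\|(u_0,u_1)\|_{H^1\times L^2}$, while at the energy-critical exponent $p = 1 + \frac{4}{d-2}$ one must instead fix $T$ so that the Strichartz norm of the linear evolution is small; this is precisely why the blow-up alternative must exclude that case. Uniqueness and continuous dependence fall out of the fixed point as usual.

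Conservation of charge, energy, and momentum is established first for smooth initial data by multiplying \eqref{NLKG} by $\overline{\partial_t u}$, $\overline{u}$, and $\overline{\nabla u}$ respectively, integrating in space, and using the self-adjointness of $-\Delta_\gamma$ so that the quadratic contributions reorganize into exact time derivatives; the statements then propagate to arbitrary $H^1 \times L^2$ data via the continuous dependence already proved. Finally, the blow-up alternative follows by contradiction: were $\|(u(t),\partial_tu(t))\|_{H^1\times L^2}$ bounded on $[0,T_{\max})$ with $T_{\max}<\infty$, an application of the sub-critical local theory at a time close to $T_{\max}$ with its uniform existence time would extend $u$ past $T_{\max}$, contradicting maximality. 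The main technical obstacle is exhibiting admissible exponents in $\Lambda_{1-s,0,(d-2)^2/4}$ for which the nonlinear estimate above closes throughout the entire upper sub-range; this is exactly where the Strichartz theory for the Klein–Gordon equation with inverse-square potential, together with Hardy-type equivalence of Sobolev spaces adapted to $-\Delta_\gamma$, carries the weight of the argument.
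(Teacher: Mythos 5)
Your proposal tracks the paper's own proof closely: for $p\in(1,1+\tfrac{2}{d-2}]$ the paper also contracts in $L_t^\infty H_x^1\times L_t^\infty L_x^2$ using $H^1\hookrightarrow L^{2p}$, and for $p\in[1+\tfrac{4}{d+1},1+\tfrac{4}{d-2}]$ it also runs a Strichartz-based fixed point (with the dual pair $\kappa=\tfrac{2(d+1)}{d-1}$, $\varrho=\tfrac{(p-1)(d+1)}{2}$ and the $W^{1/2,\kappa}$ auxiliary norm), treating the energy-critical endpoint $p=1+\tfrac{4}{d-2}$ by smallness of the free evolution in the spacetime norm rather than of the data, exactly as you describe. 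The only cosmetic difference is that the paper splits the upper range into two further subcases ($p\lessgtr 1+\tfrac{4d}{(d+1)(d-2)}$) to handle the time-Lebesgue exponent bookkeeping, but this is a technical refinement of the same strategy.
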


The next theorem is a main result in this paper.

\begin{theorem}[Stability versus very strong instability]\label{Instability versus stability}
Let $d \geq 3$, $1 < p < 1 + \frac{4}{d-2}$, $\gamma > 0$, $1 - \omega^2 > 0$, and $\omega_c := \sqrt{\frac{p-1}{4 - (d-1)(p-1)}}$.
We assume that \eqref{NLKG} with \eqref{IC} is locally well-posed for $1 + \frac{2}{d-2} < p < 1 + \frac{4}{d+1}$.
\begin{itemize}
\item (Stability of $\mathcal{G}_{\omega,\gamma,\text{rad}}$)
Let $1 < p < 1 + \frac{4}{d}$ and $\omega_c < |\omega| < 1$.
Then, $\mathcal{G}_{\omega,\gamma,\text{rad}}$ is stable.
\item (Very strong instability of $\mathcal{G}_{\omega,\gamma,\text{rad}}$)
We assume $|\omega| \leq \omega_c$ if $1 < p < 1 + \frac{4}{d}$.
Then, $\mathcal{G}_{\omega,\gamma,\text{rad}}$ is very strongly unstable.
\item (Very strong instability of $\mathcal{A}_{\omega,\gamma,\text{rad}}$)
Let $1 < p < 1 + \frac{4}{d}$ and $|\omega| = \omega_c$.
Then, $\mathcal{A}_{\omega,\gamma,\text{rad}}$ is very strong unstable.
\end{itemize}
\end{theorem}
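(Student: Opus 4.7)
The proof follows the variational framework of Shatah and Ohta--Todorova, adapted to the inverse-square potential and the radial setting. The common backbone is the constraint functional $K_{\omega,\gamma}^{\alpha,\beta}$ used at two distinguished scalings: the Nehari choice $(\alpha,\beta)=(1,0)$ for the stability regime and the $L^2$-invariant choice $(\alpha,\beta)=(d/2,1)$ for the blow-up regime. A preliminary task, done once and for all, is to establish that $\mathcal{G}_{\omega,\gamma,\text{rad}}\subset\mathcal{M}_{\omega,\gamma,\text{rad}}^{\alpha,\beta}$ for each of these scalings, so that the ``radial'' ground state realizes the constrained infimum $r_{\omega,\gamma}^{\alpha,\beta}$ in \eqref{146}. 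Existence of a minimizer uses the Strauss compact embedding $H_{\text{rad}}^1(\mathbb{R}^d)\hookrightarrow L^{p+1}(\mathbb{R}^d)$ (valid for $1<p<1+\frac{4}{d-2}$), together with Hardy's inequality, which gives the equivalence of the $H^1$ and $H_\gamma^1$ norms for $\gamma>-(d-2)^2/4$. The principle of symmetric criticality then identifies the minimizers with radial weak solutions of \eqref{SP}.

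For the stability statement ($1<p<1+\frac{4}{d}$, $\omega_c<|\omega|<1$) the plan is a Shatah--Strauss contradiction argument using $(\alpha,\beta)=(1,0)$. Setting $d(\omega):=r_{\omega,\gamma}^{1,0}$, a direct computation (as for $\gamma=0$) shows that $|\omega|>\omega_c$ is equivalent to the Grillakis--Shatah--Strauss convexity $d''(\omega)>0$. If stability failed, there would exist $\varepsilon_0>0$, radial data $(u_{0,n},u_{1,n})\to(Q_{\omega,\gamma},i\omega Q_{\omega,\gamma})$, and times $t_n>0$ with
\[ \inf_{Q\in\mathcal{G}_{\omega,\gamma,\text{rad}}}\|(u_n(t_n),\partial_tu_n(t_n))-(Q,i\omega Q)\|_{H^1\times L^2}=\varepsilon_0. \]
Conservation of $E_\gamma$ and $C$ implies that $(u_n(t_n),\partial_tu_n(t_n))$ is still an almost-minimizing sequence for a variational problem equivalent to \eqref{146} (after a phase modulation), and the compactness established above forces convergence to some element of $\mathcal{G}_{\omega,\gamma,\text{rad}}$, contradicting the choice of $\varepsilon_0$.

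For the very strong instability of $\mathcal{G}_{\omega,\gamma,\text{rad}}$ the plan is to combine the $L^2$-invariant scaling with a virial identity. For radial $u$ one derives, at least formally,
\begin{align*}
\tfrac{d^2}{dt^2}\bigl(\tfrac{1}{2}\|u(t)\|_{L^2}^2\bigr)
= \|\partial_tu\|_{L^2}^2 - \|(-\Delta_\gamma)^{1/2}u\|_{L^2}^2 - \|u\|_{L^2}^2 + \|u\|_{L^{p+1}}^{p+1},
\end{align*}
and, combining with conservation of $E_\gamma,C$ and the variational characterization, a Payne--Sattinger dichotomy: if the initial data $(u_0,u_1)$ satisfy
\[ S_{\omega,\gamma}(u_0)+\tfrac{1}{2}\|u_1-i\omega u_0\|_{L^2}^2<r_{\omega,\gamma}^{d/2,1}, \qquad K_{\omega,\gamma}^{d/2,1}(u_0)<0, \]
then both conditions are propagated by the flow, and a Levine-type concavity applied to $\|u(t)\|_{L^2}^2$ yields finite-time blow-up. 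Taking $u_0=Q_{\omega,\gamma}^\lambda:=e^{(d/2)\lambda}Q_{\omega,\gamma}(e^\lambda\,\cdot\,)$ and $u_1=i\omega u_0$ with $\lambda>0$ small produces data arbitrarily close to $(Q_{\omega,\gamma},i\omega Q_{\omega,\gamma})$ for which both inequalities hold, thanks to $K_{\omega,\gamma}^{d/2,1}(Q_{\omega,\gamma})=0$ and the strict maximality of $\lambda\mapsto S_{\omega,\gamma}(Q_{\omega,\gamma}^\lambda)$ at $\lambda=0$. The third item, at $|\omega|=\omega_c$, lies precisely where the convexity $d''(\omega)>0$ fails; here the construction of admissible perturbations requires a joint modulation of $\omega$ and $\lambda$ as in Ohta--Todorova, using that any element of $\mathcal{A}_{\omega,\gamma,\text{rad}}$ still satisfies $K_{\omega,\gamma}^{d/2,1}=0$ by the Pohozaev identity.

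The main obstacle is twofold. First, because $\gamma>0$ precludes Schwarz symmetrization, all variational work must be performed in $H_{\text{rad}}^1(\mathbb{R}^d)$; one cannot invoke the usual Berestycki--Lions existence theory and must instead verify both the symmetric criticality principle and that the infimum \eqref{146} is attained by a ``radial'' ground state (rather than by a concentrating or escaping sequence), which is where the absence of translation symmetry in the potential and the Strauss embedding become essential. Second, the virial identity must be derived rigorously in the presence of the inverse-square singularity: a truncated-virial / Ogawa--Tsutsumi approximation is needed to handle the origin, and the potential term $\int\gamma|x|^{-2}|u|^2\,dx$ must be absorbed carefully when comparing with $K_{\omega,\gamma}^{d/2,1}$, which is exactly where the explicit threshold $\omega_c$ is recovered from the balance between the Gagliardo--Nirenberg exponent and the mass term.
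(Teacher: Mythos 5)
Your stability argument is broadly in line with the paper's: a Shatah-type contradiction argument hinging on strict convexity and monotonicity of $\omega\mapsto r_{\omega,\gamma}^{\alpha,\beta}$ plus compactness of radial minimizing sequences via the Strauss embedding. Note, though, that the paper works with the constraint $(\alpha,\beta)=(0,-1)$, whose associated functional $T_{\omega,\gamma}^{0,-1}(f)=\frac{1}{d}\|(-\Delta_\gamma)^{1/2}f\|_{L^2}^2$ is independent of $\omega$; this is exactly what makes the two-sided trapping $(u(t),\partial_tu(t))\in\mathcal{R}_{\omega_0-\varepsilon,+}^{0,-1}\cap\mathcal{R}_{\omega_0+\varepsilon,-}^{0,-1}$ work and is the step your sketch leaves implicit when you say conservation laws produce ``an almost-minimizing sequence after a phase modulation.''

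The instability part has a genuine gap. You propose to propagate $K_{\omega,\gamma}^{d/2,1}(u(t))<0$ and then run a Levine concavity argument on $\|u(t)\|_{L^2}^2$. But the identity you write,
\begin{align*}
\frac{d^2}{dt^2}\Bigl(\tfrac12\|u(t)\|_{L^2}^2\Bigr)
=\|\partial_tu\|_{L^2}^2-\|(-\Delta_\gamma)^{1/2}u\|_{L^2}^2-\|u\|_{L^2}^2+\|u\|_{L^{p+1}}^{p+1},
\end{align*}
is (minus) the Nehari-type functional $K_1$ of the paper, not the scaling functional $K_{\omega,\gamma}^{d,2}(u)=2\|(-\Delta_\gamma)^{1/2}u\|_{L^2}^2-\frac{(p-1)d}{p+1}\|u\|_{L^{p+1}}^{p+1}$; negativity of the latter gives no sign information on the former, so the concavity cannot be closed this way (and since data near the ground state have energy close to $S_{\omega,\gamma}(Q_{\omega,\gamma})>0$, the classical negative-energy Levine argument is also unavailable). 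The quantity whose time derivative equals $-K_{\omega,\gamma}^{d,2}(u)$ is the momentum-type virial $\mathrm{Re}\int(2x\cdot\nabla u+du)\overline{\partial_tu}\,dx$, which for general $H^1\times L^2$ data must be localized; the paper controls the localization errors by the radial Sobolev inequality, shows that a globally bounded solution would force the localized virial to grow linearly (contradiction), and then invokes Theorem \ref{Global implies boundedness} (resp.\ Proposition \ref{Uniform estimate of global solution} together with a further space--time localization) to upgrade ``not globally bounded'' to finite-time blow-up --- a step your argument would also need, since unboundedness alone gives only strong, not very strong, instability. Finally, in the mass-subcritical regime $1<p<1+\frac{4}{d}$ the pure scaling functional is insufficient: the paper uses the combination $K=K_{\omega,\gamma}^{d,2}+qK_1$ with $q=\frac{4}{p-1}-d$ together with charge conservation, and the hypothesis $|\omega|\le\omega_c$ enters precisely through the sign of $1-(q+1)\omega^2$; your proposal does not explain how $\omega_c$ enters the blow-up construction.
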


We note that we assume local well-posedness of \eqref{NLKG} with \eqref{IC} for $1 + \frac{2}{d-2} < p < 1 + \frac{4}{d+1}$.
However, this restriction arises case only $d \geq 6$ and hence, we can get Theorem \ref{Instability versus stability} without assumption of local well-posedness for $3 \leq d \leq 5$.

The following theorem is useful to get very strong instability of $\mathcal{G}_{\omega,\gamma,\text{rad}}$ (or $\mathcal{A}_{\omega,\gamma,\text{rad}}$) for $1 < p < 1 + \frac{4}{d-1}$.

\begin{theorem}[Boundedness of global solutions]\label{Global implies boundedness}
Let $d \geq 3$, $1 < p < 1 + \frac{4}{d-1}$, and $\gamma > - (\frac{d-2}{2})^2$.
If $(u,\partial_tu) \in C_t([0,\infty) ; H_x^1(\mathbb{R}^d) \times L_x^2(\mathbb{R}^d))$ is a global solution to \eqref{NLKG}, then
\begin{align*}
	\sup_{t \geq 0}\|(u(t),\partial_tu(t))\|_{H_x^1 \times L_x^2}
		< \infty
\end{align*}
holds.
\end{theorem}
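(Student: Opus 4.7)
The plan is to combine energy conservation with Gagliardo--Nirenberg interpolation, a sub-linear control on $\|u(t)\|_{L^2}$, and a Gronwall-type bootstrap that closes thanks to the $H^{1/2}$-subcriticality $p < 1 + \frac{4}{d-1}$. Setting $y(t) := \bigl(\|u(t)\|_{H^1_\gamma}^2 + \|\partial_t u(t)\|_{L^2}^2\bigr)^{1/2}$, conservation of energy (from Theorem \ref{Local well-posedness}) rewrites as
\[
y(t)^2 = 2\,E_\gamma[u_0,u_1] + \tfrac{2}{p+1}\|u(t)\|_{L^{p+1}}^{p+1},
\]
so the task reduces to a uniform-in-$t$ bound on $\|u(t)\|_{L^{p+1}}$.

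Under $\gamma > -(\tfrac{d-2}{2})^2$, Hardy's inequality makes the $H^1_\gamma$-norm equivalent to the standard $H^1$-norm, so Gagliardo--Nirenberg gives
\[
\|u(t)\|_{L^{p+1}}^{p+1} \leq C\,\|u(t)\|_{L^2}^{p+1-\nu}\,y(t)^\nu, \qquad \nu := \tfrac{d(p-1)}{2},
\]
and the hypothesis $p < 1 + \tfrac{4}{d-1}$ is precisely $\nu < \tfrac{2d}{d-1}$. Meanwhile, the growth of $\|u(t)\|_{L^2}$ is controlled by
\[
\Bigl|\tfrac{d}{dt}\|u(t)\|_{L^2}\Bigr| \leq \|\partial_t u(t)\|_{L^2} \leq y(t),
\]
yielding $\|u(t)\|_{L^2} \leq \|u_0\|_{L^2} + \int_0^t y(s)\,ds$. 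Substituting back into the energy identity produces a self-referential integral inequality for $y$, which I would then analyze via a continuity--Gronwall bootstrap, extending the estimate interval-by-interval using the local well-posedness of Theorem \ref{Local well-posedness}.

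The main obstacle will be the regime $\nu \geq 2$ (equivalently $p \geq 1 + \tfrac{4}{d}$), where the $y^\nu$ term cannot be absorbed directly into $y^2$ via Young's inequality. Here one must exploit that $p < 1 + \tfrac{4}{d-1}$ is the \emph{$H^{1/2}$-subcritical} threshold: on each unit time interval, Strichartz estimates for the Klein--Gordon propagator $K_\gamma$ at regularity $s = \tfrac12$, using admissible pairs drawn from $\Lambda_{1-s,0,(d-2)^2/4}$, bound the nonlinear contribution by a strictly sub-critical power of the starting $H^1\times L^2$-norm on that interval. Combined with the at-most-polynomial growth of $\|u(t)\|_{L^2}$ obtained above, this rules out any escape of $y(t)$ to infinity and delivers the desired uniform bound $\sup_{t \geq 0} y(t) < \infty$.
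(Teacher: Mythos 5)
Your proposal misses the essential and genuinely nontrivial step of the paper's argument: establishing a \emph{uniform-in-time} bound on $\|u(t)\|_{L^2}$ for global solutions via a convexity/concavity argument in the spirit of Cazenave and Merle--Zaag (Lemma \ref{Boundedness of L2}).  Because the equation is focusing, energy conservation alone gives no a priori control of any norm, and your substitute estimate $\|u(t)\|_{L^2}\le\|u_0\|_{L^2}+\int_0^t y(s)\,ds$ is genuinely too weak: it allows $\|u(t)\|_{L^2}$ to grow linearly in $y$, which feeds back into the Gagliardo--Nirenberg step as a positive power of the unknown.  If you track the self-consistency of the resulting inequality $y(t)^2\le C + C\|u(t)\|_{L^2}^{p+1-\nu}y(t)^\nu$ with that $L^2$-input, you find at best polynomial growth of $y(t)$, not boundedness, even in the regime $\nu<2$ where Young's inequality applies.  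The Strichartz fix you sketch for $\nu\ge 2$ suffers from the same defect: local Strichartz estimates on unit intervals give control of $[T,T+1]$ in terms of the data at time $T$, and iterating that without an anchoring a priori bound produces exponential growth rather than a uniform bound.

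The paper's route is structurally different.  The convexity argument shows that if $(p-1)\|u(t)\|_{L^2}^2-2(p+1)E_\gamma$ ever became positive and increasing, then $t\mapsto\|u(t)\|_{L^2}^{-(p-1)/2}$ would be concave and eventually vanish, contradicting global existence; this yields $\sup_t\|u(t)\|_{L^2}<\infty$ and $E_\gamma\ge0$ with no use of Gagliardo--Nirenberg at all.  Extending this to the derivative $f'(t)$ and integrating the identity for $f''(t)$ (Lemma \ref{Boundedness of integral}) then gives the time-averaged bound $\sup_t\int_t^{t+1}\|(u,\partial_t u)\|_{H^1\times L^2}^2\,ds<\infty$ (Proposition \ref{Uniform estimate of global solution}).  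The mean value theorem then produces, near each time $t$, a moment $\tau(t)\in[t,t+1]$ at which $\|u(\tau(t))\|_{L^{p+1}}$ is uniformly controlled, and one propagates an $L^{(p+3)/2}$ bound from $\tau(t)$ to $t$ over the bounded gap $|t-\tau(t)|\le1$ by a differential estimate.  Only at this last stage does the restriction $p<1+\tfrac{4}{d-1}$ enter, ensuring that the Gagliardo--Nirenberg exponent on $\|\nabla u\|_{L^2}$ is strictly less than $2$ so it can be absorbed.  Without the first ODE step, none of the subsequent bootstrap closes uniformly in time, so the proposal as written would not prove the theorem.
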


We can find Theorem \ref{Global implies boundedness} with $\gamma = 0$ in \cite{OhtTod07}.

\subsection{Organization of the paper}

The organization of the rest of this paper is as follows:
In Section \ref{Preliminary}, we prepare some notations and tools.
In Section \ref{Sec:Local well-posedness}, we prove local well-posedness of \eqref{NLKG}.
In Section \ref{Boundedness of global solutions}, we show that global existence of solutions to \eqref{NLKG} implies uniform boundedness of the solutions (Theorem \ref{Global implies boundedness}).
In Section \ref{Existence of a minimizer}, we get a minimizer to $r_{\omega,\gamma}^{\alpha,\beta}$.
In Section \ref{Characterization of the ground state}, we characterize a minimizer to $r_{\omega,\gamma}^{\alpha,\beta}$ by the virial functional $K_{\omega,\gamma}^{\alpha,\beta}$.
In Section \ref{Proof of stability}, we prove the stability result in Theorem \ref{Instability versus stability}.
In Section \ref{Proof of very strong instability}, we prove the very strong instability results in Theorem \ref{Instability versus stability}.

\section{Preliminary}\label{Preliminary}

In this section, we define some notations and collect some tools. 

\subsection{Notations and definitions}

For nonnegative $X$ and $Y$, we write $X\lesssim Y$ to denote $X\leq CY$ for some $C > 0$.
If $X \lesssim Y \lesssim X$ holds, we write $X \sim Y$.

For $1 \leq p \leq \infty$, $L^p(\mathbb{R}^d)$ denotes the usual Lebesgue space.
$H^1(\mathbb{R}^d)$ denotes the usual Sobolev space.
We note that $H^1(\mathbb{R}^d)$ is a real Hilbert space with an inner product:
\begin{align*}
	\langle f, g \rangle_{H^1}
		= \langle f, g\rangle_{L^2} + \langle \nabla f, \nabla g\rangle_{L^2}
		:= \text{Re}\int_{\mathbb{R}^d} (f(x)\overline{g(x)} + \nabla f(x)\cdot \overline{\nabla g(x)})dx.
\end{align*}
For a Banach space $X$, we use $L^q(I;X)$ to denote the Banach space of functions $f : I \times \mathbb{R}^d \longrightarrow \mathbb{C}$ whose norm is $\|f\|_{L_t^q(I;X)} := \|\|f(t)\|_X\|_{L_t^q(I)} < \infty$.
If a time interval is not specified, then the $t$-norm is taken over $[0,\infty)$, that is, $\|f\|_{L_t^qX} := \|f\|_{L_t^q(0,\infty;X)}$.
We define the Fourier transform $\mathcal{F}$ and the inverse Fourier transform $\mathcal{F}^{-1}$ on $\mathbb{R}^d$ as
\begin{align*}
	\mathcal{F}f(\xi)
		:=\int_{\mathbb{R}^d}e^{-2\pi ix\cdot\xi}f(x)dx\ \ \text{ and }\ \ 
	\mathcal{F}^{-1}f(x)
		:=\int_{\mathbb{R}^d}e^{2\pi ix\cdot\xi}f(\xi)d\xi,
\end{align*}
where $x\cdot\xi$ denotes the usual inner product of $x$ and $\xi$ on $\mathbb{R}^d$.
$W^{s,p}(\mathbb{R}^d) = (1 - \Delta)^{-\frac{s}{2}}L^p(\mathbb{R}^d)$ is the inhomogeneous Sobolev space and $\dot{W}^{s,p}(\mathbb{R}^d) = (-\Delta)^{-\frac{s}{2}}L^p(\mathbb{R}^d)$ is the homogeneous Sobolev space for $s \in \mathbb{R}$ and $p \in [1,\infty]$, where $(1 - \Delta)^\frac{s}{2} = \mathcal{F}^{-1}(1 + 4\pi^2|\xi|^2)^\frac{s}{2}\mathcal{F}$ and $(- \Delta)^\frac{s}{2} = |\nabla|^s := \mathcal{F}^{-1}(2\pi|\xi|)^s\mathcal{F}$.
When $p = 2$, we express $W^{s,2}(\mathbb{R}^d) = H^s(\mathbb{R}^d)$ and $\dot{W}^{s,2}(\mathbb{R}^d) = \dot{H}^s(\mathbb{R}^d)$.
We also define a Sobolev space with a potential as $W_\gamma^{s,p}(\mathbb{R}^d) = (1 - \Delta_\gamma)^{-\frac{s}{2}}L^p(\mathbb{R}^d)$ and $\dot{W}_\gamma^{s,p}(\mathbb{R}^d) = (- \Delta_\gamma)^{-\frac{s}{2}}L^p(\mathbb{R}^d)$ for $s \in \mathbb{R}$ and $p \in [1,\infty]$.
When $p = 2$, we express $W_\gamma^{s,2}(\mathbb{R}^d) = H_\gamma^s(\mathbb{R}^d)$ and $\dot{W}_\gamma^{s,2}(\mathbb{R}^d) = \dot{H}_\gamma^s(\mathbb{R}^d)$.
For a space $X$, we define a space $X_\text{rad} := \{f \in X : f\text{ is a radial function.}\}$.

We define an operator $\mathcal{D}^{\alpha,\beta}$ as
\begin{align*}
	\mathcal{D}^{\alpha,\beta}\phi
		:= \left.\frac{\partial}{\partial \lambda}\right|_{\lambda = 0}e^{\alpha\lambda}\phi(e^{\beta\lambda}\,\cdot\,)\ \ \text{ as }\ \ 
	\mathcal{D}^{\alpha,\beta}\mathscr{F}(\phi)
		:= \left.\frac{\partial}{\partial \lambda}\right|_{\lambda = 0}\mathscr{F}(e^{\alpha\lambda}\phi(e^{\beta\lambda}\,\cdot\,))
\end{align*}
for any function $\phi$, any functional $\mathscr{F}$, and the parameter $(\alpha,\beta)$ satisfying
\begin{align}
	\alpha
		\geq 0,\ \ 
	2\alpha - d\beta
		\geq 0,\ \ 
	2\alpha - (d-2)\beta
		> 0,\ \ 
	(p-1)\alpha - 2\beta
		\geq 0,\ \text{ and }\ 
	(\alpha,\beta) \neq (0,0). \label{101}
\end{align}
We note that the relation \eqref{101} deduces
\begin{align*}
	(p+1)\alpha - d\beta
		> 0.
\end{align*}
We set a positive constant $\overline{\mu}$ as follows:
\begin{align*}
\overline{\mu} :=
\begin{cases}
&\hspace{-0.4cm}\displaystyle{
	(p+1)\alpha - d\beta,
	}\ \text{ if }\ \beta \geq 0,\\
&\hspace{-0.4cm}\displaystyle{
	2\alpha - d\beta,
	}\ \text{ if }\ \beta < 0.
\end{cases}
\end{align*}
In particular, we use $(\alpha,\beta) = (d,2), (2,p-1), (0,-1)$ in this paper.

\subsection{Some tools}

\begin{lemma}[Radial Sobolev inequality, \cite{OgaTsu91}]\label{Radial Sobolev inequality}
Let $d \geq 2$ and $p \geq 1$.
Then, the following inequality holds:
\begin{align*}
	\|f\|_{L^{p+1}(|x| \geq R)}^{p+1}
		\lesssim R^{-\frac{(d-1)(p-1)}{2}}\|f\|_{L^2(|x| \geq R)}^\frac{p+3}{2}\|\nabla f\|_{L^2(|x| \geq R)}^\frac{p-1}{2}
\end{align*}
for any $f \in H_\text{rad}^1(\mathbb{R}^d)$ and any $R > 0$, where the implicit constant is independent of $f$ and $R$.
\end{lemma}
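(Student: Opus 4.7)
The plan is to reduce the inequality to the classical Strauss-type pointwise decay bound for radial $H^1$ functions, namely
\begin{align*}
r^{d-1}|f(r)|^2 \lesssim \|f\|_{L^2(|x|\geq r)}\|\nabla f\|_{L^2(|x|\geq r)},
\end{align*}
and then to use the elementary splitting $|f|^{p+1} = |f|^{p-1}\cdot|f|^2$ to peel off an $L^\infty$ factor from the $L^{p+1}$ norm on the exterior region.

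To establish the decay bound, I first restrict to Schwartz radial functions (so that $f(r)\to 0$ as $r\to\infty$ is guaranteed) and apply the fundamental theorem of calculus:
\begin{align*}
|f(r)|^2 = -\int_r^\infty \frac{d}{ds}|f(s)|^2\,ds = -2\int_r^\infty \mathrm{Re}\bigl(\overline{f(s)}f'(s)\bigr)\,ds.
\end{align*}
Multiplying by $r^{d-1}$, inserting the harmless factor $r^{d-1}\leq s^{d-1}$ for $s\geq r$, and applying Cauchy--Schwarz with the radial weight $s^{d-1}\,ds$ yields
\begin{align*}
r^{d-1}|f(r)|^2 \lesssim \|f\|_{L^2(|x|\geq r)}\|\partial_r f\|_{L^2(|x|\geq r)} \leq \|f\|_{L^2(|x|\geq r)}\|\nabla f\|_{L^2(|x|\geq r)},
\end{align*}
and a density argument extends this pointwise bound to all $f\in H^1_{\mathrm{rad}}(\mathbb{R}^d)$.

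With the decay bound in hand, for every $r\geq R$ the monotonicity in $r$ of the exterior norms and the factor $r^{-(d-1)}\leq R^{-(d-1)}$ give
\begin{align*}
\|f\|_{L^\infty(|x|\geq R)}^{p-1} \lesssim R^{-\frac{(d-1)(p-1)}{2}}\|f\|_{L^2(|x|\geq R)}^{\frac{p-1}{2}}\|\nabla f\|_{L^2(|x|\geq R)}^{\frac{p-1}{2}}.
\end{align*}
The conclusion then follows by combining this with
\begin{align*}
\int_{|x|\geq R}|f|^{p+1}\,dx \leq \|f\|_{L^\infty(|x|\geq R)}^{p-1}\,\|f\|_{L^2(|x|\geq R)}^2
\end{align*}
and collecting exponents via $\tfrac{p-1}{2}+2=\tfrac{p+3}{2}$. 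No real obstacle is expected: the only delicate point is the density step from Schwartz radial functions to $H^1_{\mathrm{rad}}(\mathbb{R}^d)$, which is completely standard since $H^1_{\mathrm{rad}}$ functions are continuous away from the origin and decay to zero at infinity.
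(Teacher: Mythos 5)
Your proof is correct and is exactly the standard Strauss-type argument behind the cited result of Ogawa--Tsutsumi; the paper itself offers no proof (it only cites \cite{OgaTsu91}), and your derivation --- exterior pointwise decay $r^{d-1}|f(r)|^2 \lesssim \|f\|_{L^2(|x|\geq r)}\|\nabla f\|_{L^2(|x|\geq r)}$ via the fundamental theorem of calculus and Cauchy--Schwarz, followed by the splitting $|f|^{p+1}=|f|^{p-1}|f|^2$ --- is the expected one, with the exponents checking out since $\tfrac{p-1}{2}+2=\tfrac{p+3}{2}$. The only cosmetic remark is that the density step needs nothing about continuity of $H^1_{\mathrm{rad}}$ functions: Fatou's lemma on the left-hand side together with $H^1$-convergence on the right already transfers the final integral inequality from radial Schwartz functions to all of $H^1_{\mathrm{rad}}(\mathbb{R}^d)$.
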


\begin{lemma}[Gagliardo-Nirenberg inequality with the potential, \cite{Din18, KilMurVisZhe17}]\label{Gagliardo-Nirenberg inequality with the potential}
Let $d \geq 3$, $1 < p \leq 1 + \frac{4}{d-2}$, and $\gamma > 0$.
Then, we have
\begin{align}
	\|f\|_{L^{p+1}}^{p+1}
		< C_\text{GN}(\gamma)\|f\|_{L^2}^{p+1 - \frac{d(p-1)}{2}}\|(-\Delta_\gamma)^\frac{1}{2}f\|_{L^2}^\frac{d(p-1)}{2} \tag{GN$_\gamma$} \label{G-N inequality}
\end{align}
for any $f \in H^1(\mathbb{R}^d) \setminus \{0\}$, where $C_\text{GN}(\gamma)$ is the best constant and satisfies $C_\text{GN}(\gamma) = C_\text{GN}(0)$.
\end{lemma}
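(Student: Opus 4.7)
My plan is to derive both the strict inequality and the equality $C_{\text{GN}}(\gamma)=C_{\text{GN}}(0)$ by sandwiching $C_{\text{GN}}(\gamma)$ between the classical best constant from above and below.

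For the upper bound $C_{\text{GN}}(\gamma)\leq C_{\text{GN}}(0)$ and for the inequality itself, I would use that for $\gamma>0$,
\begin{equation*}
\|\nabla f\|_{L^2}^2 \;\leq\; \|\nabla f\|_{L^2}^2+\gamma\int_{\mathbb{R}^d}\frac{|f(x)|^2}{|x|^2}\,dx \;=\; \|(-\Delta_\gamma)^{1/2}f\|_{L^2}^2,
\end{equation*}
apply the classical Gagliardo--Nirenberg inequality (with optimal constant $C_{\text{GN}}(0)$) to $\|\nabla f\|_{L^2}$, and then substitute the larger quantity $\|(-\Delta_\gamma)^{1/2}f\|_{L^2}$ on the right-hand side. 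To upgrade to \emph{strict} inequality for every $f\in H^1(\mathbb{R}^d)\setminus\{0\}$, I would note that Hardy's inequality in dimension $d\geq 3$ guarantees $0<\int|f|^2/|x|^2\,dx<\infty$ for any such $f$, so $\|\nabla f\|_{L^2}<\|(-\Delta_\gamma)^{1/2}f\|_{L^2}$ whenever $f\not\equiv 0$, and the substitution becomes strict.

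For the matching lower bound $C_{\text{GN}}(\gamma)\geq C_{\text{GN}}(0)$, I would let $Q\in H^1(\mathbb{R}^d)$ be the (positive, radial, exponentially decaying) Weinstein-type optimizer of the classical GN inequality and consider the translates $Q_n(x):=Q(x-y_n)$ along a sequence with $|y_n|\to\infty$. The $L^{p+1}$, $L^2$, and $\|\nabla\,\cdot\,\|_{L^2}$ norms are translation-invariant, so the only term in the GN ratio that changes is the Hardy term. I would then verify that
\begin{equation*}
\int_{\mathbb{R}^d}\frac{|Q(x-y_n)|^2}{|x|^2}\,dx \;\longrightarrow\; 0
\end{equation*}
by splitting the integral into the regions $\{|x|>|y_n|/2\}$, where $1/|x|^2\leq 4/|y_n|^2$ and the integral is controlled by $4\|Q\|_{L^2}^2/|y_n|^2$, and $\{|x|\leq|y_n|/2\}$, where $Q(x-y_n)$ lies in the far tail of $Q$ (hence is uniformly small, with decay dominating the local singularity $|x|^{-2}$, integrable on balls since $d\geq 3$). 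Consequently $\|(-\Delta_\gamma)^{1/2}Q_n\|_{L^2}\to\|\nabla Q\|_{L^2}$, so the GN ratios of $Q_n$ approach $C_{\text{GN}}(0)$, yielding the reverse inequality.

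Combining the two bounds gives $C_{\text{GN}}(\gamma)=C_{\text{GN}}(0)$, and the strict inequality from the first step then also shows that this optimal constant is \emph{not attained} in $H^1(\mathbb{R}^d)\setminus\{0\}$---a phenomenon consistent with the authors' remark that Schwarz symmetrization is unavailable in the potential setting. The main technical point I expect to work through is the convergence to zero of the Hardy term along the translating sequence; this hinges on the tail behavior of $Q$, but since the Euler--Lagrange equation has a positive mass term the optimizer decays exponentially, and the splitting estimate above is comfortably sufficient.
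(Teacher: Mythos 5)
Your proof is essentially the standard argument (and the one used in the cited references, since the paper itself does not prove this lemma): for $\gamma>0$ the inequality follows by inserting the classical sharp Gagliardo--Nirenberg inequality and then replacing $\|\nabla f\|_{L^2}$ by the strictly larger $\|(-\Delta_\gamma)^{1/2}f\|_{L^2}$, and the reverse bound $C_{\text{GN}}(\gamma)\geq C_{\text{GN}}(0)$ follows by translating an optimizing sequence to spatial infinity so that the Hardy term evaporates. Both halves are done correctly for $1<p<1+\frac{4}{d-2}$, and the observation that the strict inequality implies non-attainment is a good sanity check.

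The one place you should be more careful is the endpoint $p=1+\frac{4}{d-2}$, which is included in the hypothesis of the lemma. There the GN inequality degenerates to the Sobolev inequality, the Euler--Lagrange equation $-\Delta Q = Q^{2^*-1}$ has \emph{no} mass term, and the optimizer is the Aubin--Talenti bubble, which decays only like $|x|^{-(d-2)}$ rather than exponentially. Two features of your translation argument then break down: (i) the exponential-decay estimate used on the region $\{|x|\leq|y_n|/2\}$ is no longer available, and (ii) for $d=3,4$ the bubble is not even in $L^2(\mathbb{R}^d)$, so the bound $4\|Q\|_{L^2}^2/|y_n|^2$ used on $\{|x|>|y_n|/2\}$ is meaningless. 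The fix is routine --- replace $Q$ by a sequence of compactly supported (or rapidly decaying) near-optimizers in $\dot H^1$, for which the Hardy term manifestly vanishes after far translation --- but as written your argument covers only the subcritical range. Since the paper only invokes the lemma for $p<1+\frac{4}{d-2}$ this does not affect anything downstream, but the claim "the optimizer decays exponentially" should be restricted to the subcritical regime.
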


We recall that the inequality \eqref{G-N inequality} with $\gamma = 0$ is attained by the ground state $Q_{\omega,0}$ to \eqref{SP} with $\gamma = 0$.

The following Hardy inequality is well-known and hence, the energy $E_\gamma$ of solutions to \eqref{NLKG} is well-defined.

\begin{lemma}[Hardy inequality]\label{Hardy inequality}
Let $d \geq 3$.
Then, it follows that
\begin{align*}
	\left(\frac{d-2}{2}\right)^2\int_{\mathbb{R}^d}\frac{1}{|x|^2}|f(x)|^2dx
		\leq \|\nabla f\|_{L^2}^2
\end{align*}
for any $f \in H^1(\mathbb{R}^d)$.
\end{lemma}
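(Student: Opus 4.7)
The plan is to prove Hardy's inequality by the completing-the-square trick, which actually appears implicitly in the introduction of the paper in connection with the identity defining $\rho$.

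First, I would reduce to $f \in C_c^\infty(\mathbb{R}^d \setminus \{0\})$. Since $d \geq 3$, capacity considerations give density of $C_c^\infty(\mathbb{R}^d \setminus \{0\})$ in $H^1(\mathbb{R}^d)$ (one removes a small ball around the origin via a cutoff whose gradient is controlled in $L^d$, and uses $\|f\|_{L^{2d/(d-2)}} \lesssim \|\nabla f\|_{L^2}$ to show the error vanishes). So if the inequality is established on smooth test functions compactly supported away from the origin, it passes to $H^1(\mathbb{R}^d)$ by Fatou and boundedness of the two integrals along any approximating sequence.

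Second, for $f \in C_c^\infty(\mathbb{R}^d \setminus \{0\})$ I would expand the nonnegative quantity
\begin{align*}
	0 \leq \int_{\mathbb{R}^d}\left|\nabla f(x) + \lambda\frac{x}{|x|^2}f(x)\right|^2 dx
		= \|\nabla f\|_{L^2}^2 + 2\lambda \,\mathrm{Re}\!\int_{\mathbb{R}^d}\overline{f(x)}\,\nabla f(x) \cdot \frac{x}{|x|^2}dx + \lambda^2\int_{\mathbb{R}^d}\frac{|f(x)|^2}{|x|^2}dx
\end{align*}
for $\lambda \in \mathbb{R}$. The cross term is handled by writing $2\,\mathrm{Re}(\overline{f}\nabla f) = \nabla|f|^2$ and integrating by parts against $x/|x|^2$, using the pointwise identity $\nabla \cdot (x/|x|^2) = (d-2)/|x|^2$ on $\mathbb{R}^d \setminus\{0\}$; since the support of $f$ avoids the origin there are no boundary terms. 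This yields
\begin{align*}
	0 \leq \|\nabla f\|_{L^2}^2 + \bigl(\lambda^2 - (d-2)\lambda\bigr)\int_{\mathbb{R}^d}\frac{|f(x)|^2}{|x|^2}dx.
\end{align*}

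Third, I would optimize by choosing $\lambda = (d-2)/2$, at which the coefficient of the weighted $L^2$ integral equals $-((d-2)/2)^2$, so that rearranging gives exactly
\begin{align*}
	\left(\frac{d-2}{2}\right)^2 \int_{\mathbb{R}^d}\frac{|f(x)|^2}{|x|^2}dx
		\leq \|\nabla f\|_{L^2}^2.
\end{align*}
Density then gives the inequality for all $f \in H^1(\mathbb{R}^d)$.

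I don't anticipate any serious obstacle here; the only mildly delicate point is the density/approximation argument at the origin, but this is standard for $d \geq 3$ via a radial cutoff of the form $\chi(|x|/\varepsilon)$ whose gradient contributes $O(\varepsilon^{(d-2)/2})$ in $L^2$. In fact, the very identity used above is precisely the one the authors record (with $\lambda = \rho$) in the introduction to display positivity of $-\Delta_\gamma^0$, so the proof can be presented as a limiting case $\gamma \searrow -((d-2)/2)^2$ of that identity.
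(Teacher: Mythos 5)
Your proof is correct, and in fact the paper does not prove this lemma at all: the authors simply state it as ``well-known'' and cite no source. Your completing-the-square argument is the standard one, and as you observe it is exactly the algebraic identity the authors use in the introduction to exhibit the positivity of $-\Delta_\gamma^0$, so the choice $\lambda = (d-2)/2$ is precisely the degenerate case $\gamma \searrow -\bigl(\tfrac{d-2}{2}\bigr)^2$ (i.e.\ $\rho \to \tfrac{d-2}{2}$) of their displayed identity. The density step is handled correctly for $d \geq 3$ (the radial cutoff has $\|\nabla \eta_\varepsilon\|_{L^d}$ bounded and the error is absorbed by the shrinking support in $L^{2d/(d-2)}$), and the divergence identity $\nabla\cdot(x/|x|^2) = (d-2)/|x|^2$ away from the origin is applied correctly; the integration by parts produces no boundary terms since $f$ is compactly supported in $\mathbb{R}^d\setminus\{0\}$. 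This fills a gap the paper leaves implicit.
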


\begin{lemma}[Equivalence of Sobolev norm, \cite{KilMiaVisZhaZhe18}]\label{Equivalence of Sobolev norm}
Let $d \geq 3$, $\gamma > - (\frac{d-2}{2})^2$, and $0 < s < 2$.
Let $\rho$ be defined as \eqref{141}.
If $1 < p < \infty$ satisfies $\frac{s+\rho}{d} < \frac{1}{p} < \min\{1,\frac{d-\rho}{d}\}$, then
\begin{align*}
	\||\nabla|^sf\|_{L^p}
		\lesssim_{d, p, s}\|(-\Delta_\gamma)^\frac{s}{2}f\|_{L^p}
\end{align*}
for any $f \in C_c^\infty(\mathbb{R}^d)$.
If $1 < p < \infty$ satisfies $\max\{\frac{s}{d},\frac{\rho}{d}\} < \frac{1}{p} < \min\{1,\frac{d-\rho}{d}\}$, then
\begin{align*}
	\|(-\Delta_\gamma)^\frac{s}{2}f\|_{L^p}
		\lesssim_{d, p, s}\||\nabla|^sf\|_{L^p}
\end{align*}
for any $f \in C_c^\infty(\mathbb{R}^d)$.
\end{lemma}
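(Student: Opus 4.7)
\textbf{Proof plan for Lemma \ref{Equivalence of Sobolev norm}.} The plan is to reduce both inequalities to the $L^p$--boundedness of the zeroth-order composition operators
\[
T_1 := (-\Delta_\gamma)^{s/2}(-\Delta)^{-s/2},\qquad T_2 := (-\Delta)^{s/2}(-\Delta_\gamma)^{-s/2},
\]
and to realize these operators through the heat semigroups via the subordination formula
\[
A^{s/2} = \frac{1}{\Gamma(-s/2)}\int_0^\infty \lambda^{-s/2-1}\bigl(e^{-\lambda A}-1\bigr)\,d\lambda,\qquad 0<s<2,
\]
applied once with $A=-\Delta_\gamma$ and once with $A=-\Delta$.

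First I would recall the sharp Gaussian-type heat kernel bound for $e^{t\Delta_\gamma}$ (Liskevich--Sobol, Milman--Semenov, recorded in \cite{KilMiaVisZhaZhe18}):
\[
\bigl|e^{t\Delta_\gamma}(x,y)\bigr|\lesssim t^{-d/2}\Bigl(1+\tfrac{\sqrt t}{|x|}\Bigr)^{\!\rho}\Bigl(1+\tfrac{\sqrt t}{|y|}\Bigr)^{\!\rho}\exp\!\Bigl(-\tfrac{c|x-y|^2}{t}\Bigr),
\]
which differs from the free Gaussian precisely by the weights $(1+\sqrt t/|x|)^\rho$ reflecting the inverse-square singularity. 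Using Duhamel,
\[
e^{t\Delta_\gamma}-e^{t\Delta}=-\gamma\int_0^t e^{(t-\tau)\Delta_\gamma}\,|\cdot|^{-2}\,e^{\tau\Delta}\,d\tau,
\]
one obtains an integral kernel for the difference $(-\Delta_\gamma)^{s/2}-(-\Delta)^{s/2}$, and hence a kernel $K_i(x,y)$ for each $T_i-\mathrm{Id}$, by inserting this identity and the subordination formula for the other factor.

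The technical heart is to dominate $|K_i(x,y)|$ and verify the weighted Schur test
\[
\sup_y\int|K_i(x,y)|\,\omega(x)^{-1}\omega(y)\,dx<\infty,\qquad \sup_x\int|K_i(x,y)|\,\omega(x)^{-1}\omega(y)\,dy<\infty,
\]
with weight $\omega(x)=|x|^{d/p}$. Combining the Gaussian bound with $\int_0^\infty \lambda^{-s/2-1}e^{-c|x-y|^2/\lambda}\,d\lambda\sim |x-y|^{-s-d}$ and $\int_0^\infty \lambda^{\sigma-1}e^{-c|x-y|^2/\lambda}\,d\lambda\sim |x-y|^{-2\sigma}$, the kernel is bounded by a sum of Riesz-type kernels dressed with $|x|^{-\rho}$ or $|y|^{-\rho}$ factors that only activate in the regime $\sqrt\lambda\gtrsim |x|$ or $\sqrt\lambda\gtrsim |y|$. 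Splitting the $\lambda$-integral at $\lambda\sim |x|^2$ and $\lambda\sim |y|^2$ and integrating region by region produces the Schur bounds precisely under the $p$-ranges stated: the upper bound $1/p<(d-\rho)/d$ and (for $T_2$) the lower bound $\rho/d<1/p$ come from integrability of the weighted $|x|^{-\rho}$ factors, while $s/d<1/p$ ensures integrability of the Riesz tail near infinity; for $T_1$ the lower bound $(s+\rho)/d<1/p$ is forced because the $|x|^{-\rho}$ factor on the output side must combine with the Riesz kernel $|x-y|^{-s-d}$.

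The main obstacle is the appearance of the weight $(1+\sqrt t/|x|)^\rho$: it forces a case analysis depending on the relative sizes of $|x|,|y|,|x-y|,\sqrt\lambda$, and it is exactly this weight that pins down the $p$-range and produces the asymmetry between the two inequalities. Once the weighted Schur estimate is established, a density argument from $C_c^\infty(\mathbb{R}^d)$ completes the proof.
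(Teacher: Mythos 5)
This lemma is imported verbatim from \cite{KilMiaVisZhaZhe18}; the paper gives no proof of it, so there is no internal argument to compare against. Your outline---the Gaussian heat-kernel bound for $e^{t\Delta_\gamma}$ with the $(1+\sqrt{t}/|x|)^\rho$ correction, subordination to realize the fractional powers, Duhamel to compare against the free semigroup, and a case analysis on the scales $\sqrt{\lambda}$ versus $|x|,|y|,|x-y|$---is essentially the strategy of the cited reference, and the sources you identify for each parameter constraint are the right ones.

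Two points would need tightening in execution. First, the Schur test you wrote, namely $\sup_y\int|K(x,y)|\,\omega(x)^{-1}\omega(y)\,dx<\infty$ and $\sup_x\int|K(x,y)|\,\omega(x)^{-1}\omega(y)\,dy<\infty$ with $\omega(x)=|x|^{d/p}$, gives (by interpolating the $L^1$ and $L^\infty$ endpoints of the conjugated kernel $\omega^{-1}K\omega$) boundedness of $T$ on the weighted space $L^p(|x|^{-d}\,dx)$, not on $L^p(dx)$. For the unweighted bound you need the asymmetric power-weight Schur test $\int|K(x,y)|\,|y|^{-\alpha p'}\,dy\lesssim|x|^{-\alpha p'}$ together with $\int|K(x,y)|\,|x|^{-\alpha p}\,dx\lesssim|y|^{-\alpha p}$ for some exponent $\alpha$; it is the admissible window for $\alpha$, dictated by the kernel bounds, that actually produces the stated thresholds $\frac{s+\rho}{d}<\frac{1}{p}$ and $\max\{\frac{s}{d},\frac{\rho}{d}\}<\frac{1}{p}<\frac{d-\rho}{d}$. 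The cited reference organizes this step through Stein--Weiss type fractional integral inequalities rather than a raw Schur test, which amounts to the same bookkeeping but done with the correct weight exponents built in. Second, the lemma is already stated for $f\in C_c^\infty(\mathbb{R}^d)$, so the concluding density step is not needed.
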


To state the Strichartz estimate (Theorem \ref{Strichartz estimate}), we define admissible pairs.

\begin{definition}\label{Admissible pair}
We call $(q,r)$ is admissible if $(q,r) \in [2,\infty] \times [2,\infty)$ satisfies
\begin{align*}
	\frac{2}{q} + \frac{d - 1 + \theta}{r}
		\leq \frac{d - 1 + \theta}{2},\ \ \ \ 
	(q,r,d,\theta)
		\neq (2,\infty,3,0)
\end{align*}
for $\theta \in [0,1]$.
For $s \in \mathbb{R}$, we define a set $\Lambda_{s,\theta}$ as
\begin{center}
	$\Lambda_{s,\theta}
		:= \{(q,r) \in [2,\infty] \times [2,\infty) : \frac{1}{q} + \frac{d + \theta}{r} = \frac{d + \theta}{2} - s,\ (q,r)\text{ is admissible.}\}$.
\end{center}
For $\nu_0 > 0$, we define a set $\Lambda_{s,\theta,\nu_0}$ as
\begin{center}
	$\Lambda_{s,\theta,\nu_0}
		:= \{(q,r) \in \Lambda_{s,\theta} : \frac{1}{r} > \frac{1}{2} - \frac{1 + \nu_0}{d}\}$.
\end{center}
\end{definition}

\begin{theorem}[Strichartz estimate, \cite{BenCacDesZha20}]\label{Strichartz estimate}
Let $d \geq 3$, $\gamma > 0$, $0 \in I$, and $(u_0,u_1) \in H^s(\mathbb{R}^d) \times H^{s-1}(\mathbb{R}^d)$ for $s \geq 0$.
Let $u$ be a solution to the following equation on $I$:
\begin{equation*}
\begin{cases}
&\hspace{-0.4cm}\displaystyle{
	- \partial_t^2 u + \Delta_\gamma u - u
		= F(t,x),\quad (t,x) \in \mathbb{R} \times\mathbb{R}^d,
	} \\
&\hspace{-0.4cm}\displaystyle{
	(u(0,x),\partial_tu(0,x))
		= (u_0,u_1),\quad x\in\mathbb{R}^d,
	}
\end{cases}
\end{equation*}
If $q_1 > 2$, $(q_1,r_1) \in \Lambda_{s,\theta_1,\frac{(d-2)^2}{4}}$, and $(q_2,r_2) \in \Lambda_{1-s,\theta_2,\frac{(d-2)^2}{4}}$, then
\begin{align*}
	\|u\|_{L_t^{q_1}(I;L_x^{r_1})} + \|u\|_{L_t^\infty(I;H_x^s)}
		\lesssim \|(u_0,u_1)\|_{H^s \times H^{s-1}} + \|F\|_{L_t^{q_2'}(I;L_x^{r_2'})}.
\end{align*}
\end{theorem}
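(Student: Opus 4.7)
The plan is to reduce the claim to the abstract Keel--Tao Strichartz machinery applied to the Klein--Gordon half-waves $e^{\pm i t (1-\Delta_\gamma)^{1/2}}$. Energy conservation $\|e^{\pm it(1-\Delta_\gamma)^{1/2}} f\|_{L^2} = \|f\|_{L^2}$ is immediate from self-adjointness of $1-\Delta_\gamma$, so the substantive task is to prove frequency-localized dispersive bounds. Because $\gamma|x|^{-2}$ is scale-critical and cannot be treated as a short-range perturbation of $-\Delta$, I would access those dispersive bounds through the heat-kernel Gaussian upper bounds for $e^{-t(-\Delta_\gamma)}$ (available in the Friedrichs range $\gamma > -(d-2)^2/4$) and the associated Mikhlin-type spectral multiplier theorem for $-\Delta_\gamma$, which in turn supplies a Littlewood--Paley decomposition spectrally adapted to $-\Delta_\gamma$.

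The main steps, in order, would be: first, perform such a Littlewood--Paley decomposition and reduce to single dyadic blocks; second, prove the $L^1 \to L^\infty$ dispersive estimate on each block, separating the low-frequency regime, where $(1-\Delta_\gamma)^{1/2} \approx 1 + \tfrac{1}{2}(-\Delta_\gamma)$ produces Schr\"odinger-type decay $|t|^{-d/2}$, from the high-frequency regime where wave-type decay $|t|^{-(d-1)/2}$ holds, and interpolate between them to produce the full $\theta$-parameter family of admissible exponents; third, invoke Keel--Tao to obtain the homogeneous Strichartz bound in the potential-Sobolev norm $\|(1-\Delta_\gamma)^{s/2}\cdot\|_{L^{r_1}}$; fourth, pass to the inhomogeneous version via Christ--Kiselev, whose hypothesis accounts for the restriction $q_1>2$; fifth, transfer from the potential-Sobolev scale to the standard Sobolev scale using Lemma \ref{Equivalence of Sobolev norm}. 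This last transfer is where the cutoff $(q,r) \in \Lambda_{s,\theta,(d-2)^2/4}$ becomes operative: the lower bound $\tfrac{1}{r} > \tfrac{1}{2} - \tfrac{1+\nu_0}{d}$ is precisely what keeps $1/r$ inside the range $\max\{s/d,\rho/d\} < 1/r < \min\{1,(d-\rho)/d\}$ required for the equivalence of $\|(-\Delta_\gamma)^{s/2}\cdot\|_{L^r}$ and $\||\nabla|^s\cdot\|_{L^r}$ to apply uniformly in the admissible $\rho$.

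The hard part will be the dispersive estimate in the second step. Since the potential is singular at the origin and scale-critical, the propagator admits no clean Fourier-integral representation, and the usual stationary-phase approach is unavailable. I would instead diagonalize $-\Delta_\gamma$ by separation of variables into spherical harmonics, reducing the Schwartz kernel on each angular mode to a Hankel-type oscillatory integral whose order depends on the combination of $\gamma$ and the spherical-harmonic index. Uniform control of these integrals across all angular modes and dyadic frequency scales, combined with the heat-kernel bounds to reassemble the Littlewood--Paley pieces, is the technical core of the argument, and tracking constants uniformly through both the spectral cutoff and the angular decomposition is what ultimately forces the specific threshold $\nu_0 = (d-2)^2/4$ to appear in the statement.
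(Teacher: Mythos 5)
The paper does not prove Theorem~\ref{Strichartz estimate}: it is quoted directly from the reference \cite{BenCacDesZha20} and used as a black box. There is therefore no proof in the paper against which your attempt can be compared, and you have set out to construct an argument that the authors deliberately outsourced to the literature.

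Taken on its own terms, your sketch follows the route that is in fact standard for inverse-square (and more general conical) potentials, and it correctly isolates the genuine difficulties: the scale-critical, singular nature of $\gamma|x|^{-2}$ rules out a perturbative treatment, so one must diagonalize via spherical harmonics and Hankel-type kernels to get frequency-localized dispersive decay, assemble the dyadic pieces with a Littlewood--Paley theory adapted to $-\Delta_\gamma$ (which requires heat-kernel bounds and a Mikhlin multiplier theorem for this operator), run Keel--Tao and Christ--Kiselev, and finally use Lemma~\ref{Equivalence of Sobolev norm} to translate potential-Sobolev norms into ordinary ones. Two points of imprecision are worth flagging. First, Gaussian upper bounds for $e^{t\Delta_\gamma}$ hold for $\gamma \geq 0$ but \emph{not} throughout the Friedrichs range $\gamma > -(\frac{d-2}{2})^2$; for negative $\gamma$ the heat kernel acquires additional $|x|^{-\rho}|y|^{-\rho}$ growth near the origin and the adapted Littlewood--Paley theory must be built from those weighted bounds instead. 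Since the theorem is stated only for $\gamma > 0$ this does not invalidate your argument, but the parenthetical claim as written is false. Second, your explanation of where $\nu_0 = \frac{(d-2)^2}{4}$ enters is vague: the restriction $\frac{1}{r} > \frac{1}{2}-\frac{1+\nu_0}{d}$ is not an artifact of constant-tracking in the angular sum, but reflects the window in which the Sobolev-norm equivalence of Lemma~\ref{Equivalence of Sobolev norm} can be applied (the parameter $\rho$ there depends on $\gamma$ and can approach $\frac{d-2}{2}$ as $\gamma \to -(\frac{d-2}{2})^2$), and the specific value of $\nu_0$ is chosen so that the translation step survives uniformly over the relevant range of singular behavior. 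Your proposal is a plausible blueprint, but nailing down these two points, together with the genuinely hard Hankel-kernel analysis you rightly identify as the technical core, is exactly the content of \cite{BenCacDesZha20}, not of this paper.
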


\section{Local well-posedness}\label{Sec:Local well-posedness}

In this section, we prove local well-posedness of \eqref{NLKG} (Theorem \ref{Local well-posedness}).
The proof is based on the constraction map argument used the Strichartz estimate (Theorem \ref{Strichartz estimate}).
In the proof, we use the same exponents with \cite{IkeInu16, KilStoVis14}.

\begin{proof}[Proof of Theorem \ref{Local well-posedness}]
Let $\|(u_0,u_1)\|_{H_x^1 \times L_x^2} \leq M$.
We define a map $\Phi$
\begin{align*}
	\Phi(u)
		:= \partial_tK_\gamma(t)u_0(x) + K_\gamma(t)u_1(x) - \int_0^tK_\gamma(t-s)(|u|^{p-1}u)(s)ds.
\end{align*}
Case 1: $1 < p \leq 1 + \frac{2}{d-2}$\\
We define a set $E$ and a distance on $E$ as
\begin{gather*}
	E
		:= \{(u,\partial_tu) \in L_t^\infty(I;H_x^1) \times L_t^\infty(I;L_x^2) : \|(u,\partial_tu)\|_{L_t^\infty(I;H_x^1) \times L_t^\infty(I;L_x^2)} \leq 2cM\}, \\
	d(u_1,u_2)
		:= \|(u_1 - u_2,\partial_tu_1 - \partial_tu_2)\|_{L_t^\infty(I;H_x^1) \times L_t^\infty(I;L_x^2)}.
\end{gather*}
Since $H^1(\mathbb{R}^d) \subset L^{2p}(\mathbb{R}^d)$ holds, we have
\begin{align*}
	\|(\Phi(u),\partial_t\Phi(u))\|_{L_t^\infty(I;H_x^1) \times L_t^\infty(I;L_x^2)}
		& \leq c\,\|(u_0,u_1)\|_{H_x^1 \times L_x^2} + c\,\||u|^{p-1}u\|_{L_t^1(I;L_x^2)} \\
		& = c\,\|(u_0,u_1)\|_{H_x^1 \times L_x^2} + c\,\|u\|_{L_t^p(I;L_x^{2p})}^p \\
		& \leq c\,\|(u_0,u_1)\|_{H_x^1 \times L_x^2} + c\,T\,\|u\|_{L_t^\infty(I;H_x^1)}^p \\
		& \leq (1 + 2^pc^pTM^{p-1})cM
\end{align*}
and
\begin{align*}
	d(\Phi(u_1),\Phi(u_2))
		& \leq c\,\||u_1|^{p-1}u_1 - |u_2|^{p-1}u_2\|_{L_t^1(I;L_x^2)} \\
		& \leq c\left\{\|u_1\|_{L_t^p(I;L_x^{2p})}^{p-1} + \|u_2\|_{L_t^p(I;L_x^{2p})}^{p-1}\right\}\|u_1 - u_2\|_{L_t^p(I;L_x^{2p})} \\
		& \leq c\,T\left\{\|u_1\|_{L_t^\infty(I;H_x^1)}^{p-1} + \|u_2\|_{L_t^\infty(I;H_x^1)}^{p-1}\right\}\|u_1 - u_2\|_{L_t^\infty(I;H_x^1)} \\
		& \leq 2^pc^pTM^{p-1}d(u_1,u_2).
\end{align*}
If we take $T > 0$ with $2^pc^pTM^{p-1} \leq \frac{1}{2}$, then $\Phi$ is a contraction map on $(E,d)$.\\
Case 2: $1 + \frac{4}{d+1} \leq p \leq 1 + \frac{4d}{(d+1)(d-2)}$\\
We set $\varrho := \frac{(p-1)(d+1)}{2}$, $\kappa := \frac{2(d+1)}{d-1}$, and $\theta := \frac{2d}{(p-1)(d+1)} - \frac{d-2}{2}$.
Then, $2 \leq \varrho \leq \frac{2d}{d-2}$ and $\theta \in [0,1]$ hold.
We define a set $E$ and a distance $d$ on $E$ as
\begin{gather*}
	E
		:= \Bigl\{u \in L_t^\infty(I;H_x^1) : \|u\|_{L_t^\infty(I;H_x^1) \cap L_t^\kappa(I;W_x^{\frac{1}{2},\kappa})} + \|\partial_tu\|_{L_t^\infty(I;L_x^2)} \leq 2cM\Bigr\}, \\
	d(u_1,u_2)
		:= \|u_1 - u_2\|_{L_t^\kappa(I;L_x^\kappa)}.
\end{gather*}
Applying the Sobolev's embedding, we have $\|u\|_{L_t^\varrho(I;L_x^\varrho)} \leq 2cMT^\frac{1}{\varrho}$.
It follows from Theorem \ref{Strichartz estimate} that
\begin{align*}
	& \|\Phi(u)\|_{L_t^\infty(I;H_x^1)} + \|\partial_t\Phi(u)\|_{L_t^\infty(I;L_x^2)} + \|(1-\Delta)^\frac{1}{4}\Phi(u)\|_{L_t^\kappa(I;L_x^\kappa)} \\
		& \hspace{5.0cm} \leq c\,\|(u_0,u_1)\|_{H_x^1 \times L_x^2} + c\,\|(1-\Delta)^\frac{1}{4}(|u|^{p-1}u)\|_{L_t^{\kappa'}(I;L_x^{\kappa'})} \\
		& \hspace{5.0cm} \leq cM + c\,\|(1-\Delta)^\frac{1}{4}u\|_{L_t^\kappa(I;L_x^\kappa)}\|u\|_{L_t^\varrho(I;L_x^\varrho)}^{p-1} \\
		& \hspace{5.0cm} \leq (1 + 2^pc^pM^{p-1}T^\frac{p-1}{\varrho})cM
\end{align*}
and
\begin{align*}
	d(\Phi(u_1),\Phi(u_2))
		& \leq c\,\||u_1|^{p-1}u_1 - |u_2|^{p-1}u_2\|_{L_t^{\kappa'}(I;L_x^{\kappa'})} \\
		& \leq c\left\{\|u_1\|_{L_t^\varrho(I;L_x^\varrho)}^{p-1} + \|u_2\|_{L_t^\varrho(I;L_x^\varrho)}^{p-1}\right\}\|u_1 - u_2\|_{L_t^\kappa(I;L_x^\kappa)} \\
		& \leq 2^pc^pM^{p-1}T^\frac{p-1}{\varrho}d(u_1,u_2).
\end{align*}
Therefore, if we take $2^pc^pM^{p-1}T^\frac{p-1}{\varrho} \leq \frac{1}{2}$, then $\Phi$ is a contraction map on $(E,d)$. \\
Case 3: $1 + \frac{4d}{(d+1)(d-2)} < p < 1 + \frac{4}{d-2}$\\
We set $\frac{1}{\sigma} := \frac{d-2}{2} - \frac{2d}{(p-1)(d+1)} > 0$ and $\nu := \frac{1}{\varrho} - \frac{1}{\sigma} > 0$.
We define a set $E$ and a distance on $E$ as
\begin{gather*}
	E
		:= \{u \in L_t^\infty(I;H_x^1) : \|u\|_{L_t^\infty(I;H_x^1) \cap L_t^\sigma(I;L_x^\varrho) \cap L_t^\kappa(I;W_x^{\frac{1}{2},\kappa})} + \|\partial_tu\|_{L_t^\infty(I;L_x^2)} \leq 2cM\}, \\
	d(u_1,u_2)
		:= \|u_1 - u_2\|_{L_t^\kappa(I;L_x^\kappa)}.
\end{gather*}
By the H\"older inequality, we have
\begin{align*}
	\|u\|_{L_t^\varrho(I;L_x^\varrho)}
		\leq T^\nu\|u\|_{L_t^\sigma(I;L_x^\varrho)}
		\leq 2cMT^\nu.
\end{align*}
It follows from Theorem \ref{Strichartz estimate} that
\begin{align*}
	& \|\Phi(u)\|_{L_t^\infty(I;H_x^1)} + \|\partial_t\Phi(u)\|_{L_t^\infty(I;L_x^2)} + \|\Phi(u)\|_{L_t^\sigma(I;L_x^\varrho)} + \|(1-\Delta)^\frac{1}{4}\Phi(u)\|_{L_t^\kappa(I;L_x^\kappa)} \\
		& \hspace{3.0cm} \leq cM + c\,\|(1-\Delta)^\frac{1}{4}(|u|^{p-1}u)\|_{L_t^{\kappa'}(I;L_x^{\kappa'})} \\
		& \hspace{3.0cm} \leq cM + c\,\|u\|_{L_t^\varrho(I;L_x^\varrho)}^{p-1}\|(1-\Delta_\gamma)^\frac{1}{4}u\|_{L_t^\kappa(I;L_x^\kappa)} \\
		& \hspace{3.0cm} \leq \{1 + 2^pc^pM^{p-1}T^{\nu(p-1)}\}cM
\end{align*}
and
\begin{align*}
	d(\Phi(u_1),\Phi(u_2))
		& \leq c\,\||u_1|^{p-1}u_1 - |u_2|^{p-1}u_2\|_{L_t^{\kappa'}(I;L_x^{\kappa'})} \\
		& \leq c\left\{\|u_1\|_{L_t^\varrho(I;L_x^\varrho)}^{p-1} + \|u_2\|_{L_t^\varrho(I;L_x^\varrho)}^{p-1}\right\}\|u_1 - u_2\|_{L_t^\kappa(I;L_x^\kappa)} \\
		& \leq 2^pc^pM^{p-1}T^{\nu(p-1)}d(u_1,u_2).
\end{align*}
Therefore, if we take $2^pc^pM^{p-1}T^{\nu(p-1)} \leq \frac{1}{2}$, then $\Phi$ is a contraction map on $(E,d)$.\\
Case 4: $p = 1 + \frac{4}{d-2}$\\
We note that $\varrho = \sigma = \frac{2(d+1)}{d-2}$ and $\nu = 0$ in this case.
We define a set $E$ and a distance $d$ on $E$ as
\begin{gather*}
E := \left\{u \in L_t^\infty(I;H_x^1) \left|
\begin{array}{l}
\|u\|_{L_t^\infty(I;H_x^1)} + \|\partial_tu\|_{L_t^\infty(I;L_x^2)} + \|u\|_{L_t^\kappa(I;W_x^{\frac{1}{2},\kappa})} \leq L,\\
\|u\|_{L_t^\sigma(I;L_x^\varrho)} \leq M
\end{array}
\right.\right\}, \\
d(u_1,u_2)
	:= \|u_1 - u_2\|_{L_t^\kappa(I;L_x^\kappa)},
\end{gather*}
where positive constants $L$ and $M$ are chosen later.
From Theorem \ref{Strichartz estimate}, we can take a sufficiently small $T = T(L,M) > 0$ satisfying
\begin{align*}
	& \|\partial_tK_\gamma(t)u_0\|_{L_t^\kappa(I;W_x^{\frac{1}{2},\kappa})} + \|\partial_tK_\gamma(t)u_0\|_{L_t^\sigma(I;L_x^\varrho)} \\
		& \hspace{2.0cm} + \|(1-\Delta_\gamma)^\frac{1}{2}K_\gamma(t)u_1\|_{L_t^\kappa(I;L_x^\kappa)} + \|K_\gamma(t)u_1\|_{L_t^\sigma(I;L_x^\varrho)}
		\leq \frac{1}{2}\min\{L,M\}.
\end{align*}
Then, we have
\begin{align*}
	\|\Phi(u)\|_{L_t^\kappa(I;W_x^{\frac{1}{2},\kappa})}
		& \leq \|\partial_tK_\gamma(t)u_0\|_{L_t^\kappa(I;W_x^{\frac{1}{2},\kappa})} + \|(1-\Delta_\gamma)^\frac{1}{2}K_\gamma(t)u_1\|_{L_t^\kappa(I;L_x^\kappa)} \\
		& \hspace{4.5cm} + c\,\|u\|_{L_t^\sigma(I;L_x^\rho)}^\frac{4}{d-2}\|(1-\Delta_\gamma)^\frac{1}{4}u\|_{L_t^\kappa(I;L_x^\kappa)} \\
		& \leq \frac{1}{2}L + c\,M^\frac{4}{d-2}L
		\leq L,
\end{align*}
where we take $M$ satisfying $0 < c\,M^\frac{4}{d-2} \leq \frac{1}{4}$ in the last inequality.
We also have
\begin{align*}
	\|\Phi(u)\|_{L_t^\sigma(I;L_x^\varrho)}
		& \leq \|\partial_tK_\gamma(t)u_0\|_{L_t^\sigma(I;L_x^\varrho)} + \|K_\gamma(t)u_1\|_{L_t^\sigma(I;L_x^\varrho)} \\
		& \hspace{3.0cm} + c\,\|u\|_{L_t^\sigma(I;L_x^\varrho)}^\frac{4}{d-2}\|(1-\Delta_\gamma)^\frac{1}{4}u\|_{L_t^\kappa(I;L_x^\kappa)} \\
		& \leq \frac{1}{2}M + c\,M^\frac{4}{d-2}L
		\leq M,
\end{align*}
where we take $L$ satisfying $0 < c\,M^\frac{6-d}{d-2}L \leq \frac{1}{2}$ in the last inequality.
By the same manner, it follows that
\begin{align*}
	d(\Phi(u_1),\Phi(u_2))
		& \leq c\,\Bigl\{\|u_1\|_{L_t^\sigma(I;L_x^\varrho)}^\frac{4}{d-2} + \|u_2\|_{L_t^\sigma(I;L_x^\varrho)}^\frac{4}{d-2}\Bigr\}\|u_1 - u_2\|_{L_t^\kappa(I;L_x^\kappa)} \\
		& \leq 2c\,M^\frac{4}{d-2}d(u_1,u_2)
		\leq \frac{1}{2}d(u_1,u_2).
\end{align*}
Therefore, we obtain the solution to \eqref{NLKG} in $(E,d)$.
Finally, we prove that the solution $u$ to \eqref{NLKG} in $(E,d)$ satisfies $\|(u,\partial_tu)\|_{L_t^\infty(I;H_x^1) \times L_t^\infty(I;L_x^2)} < \infty$.
\begin{align*}
	\|(u,\partial_tu)\|_{L_t^\infty(I;H_x^1) \times L_t^\infty(I;L_x^2)}
		& \leq c\,\|(u_0,u_1)\|_{H_x^1 \times L_x^2} + c\,\|(1-\Delta_\gamma)^\frac{1}{4}(|u|^\frac{4}{d-2}u)\|_{L_t^{\kappa'}(I;L_x^{\kappa'})} \\
		& \leq c\,\|(u_0,u_1)\|_{H_x^1 \times L_x^2} + c\,M^\frac{4}{d-2}L
		< \infty.
\end{align*}
\end{proof}

\section{Boundedness of global solutions}\label{Boundedness of global solutions}

In this section, we show that global existence implies uniform boundedness (Theorem \ref{Global implies boundedness}).
The proof is based on the argument in \cite{OhtTod07} due to \cite{Caz85, MerZaa03} (see also \cite{Miy21}).
In this section, we use the notation $f(t) = \|u(t)\|_{L_x^2}^2$.

\begin{lemma}\label{Boundedness of L2}
Let $d \geq 3$, $\gamma > - (\frac{d-2}{2})^2$, and $1 < p \leq 1 + \frac{4}{d-2}$.
Let $(u,\partial_tu) \in C_t([0,\infty);H_x^1(\mathbb{R}^d) \times L_x^2(\mathbb{R}^d))$ be a global solution to \eqref{NLKG}.
Then, it follows that
\begin{gather}
	\frac{d}{dt}([(p-1)f(t) - 2(p+1)E_\gamma(u(0),\partial_tu(0))]_+)
		\leq 0\ \ \text{ for }\ \ 0 \leq t < \infty, \label{115} \\
	E_\gamma(u(0),\partial_tu(0))
		\geq 0, \label{116} \\
	f(t)
		\leq \sup \left\{f(0),\frac{2(p+1)}{p-1}E_\gamma(u(0),\partial_tu(0))\right\}\ \ \text{ for }\ \ 0 \leq t < \infty, \label{117}
\end{gather}
where $[\,\cdot\,]_+ := \max\{\,\cdot\,,0\}$.
\end{lemma}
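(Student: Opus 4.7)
The plan is to derive a single pointwise differential inequality for $f(t):=\|u(t)\|_{L^2}^2$ and read off all three conclusions from it. Differentiating $f$ twice, using the equation $-\partial_t^2u+\Delta_\gamma u-u=-|u|^{p-1}u$ and integration by parts, yields
\[
f''(t)=2\|\partial_tu(t)\|_{L^2}^2-2\|(-\Delta_\gamma)^{1/2}u(t)\|_{L^2}^2-2\|u(t)\|_{L^2}^2+2\|u(t)\|_{L^{p+1}}^{p+1};
\]
setting $E_\gamma:=E_\gamma(u(0),\partial_tu(0))$ and eliminating $\|u\|_{L^{p+1}}^{p+1}$ via conservation of energy collapses this to
\[
f''(t)=(p+3)\|\partial_tu(t)\|_{L^2}^2+(p-1)\|(-\Delta_\gamma)^{1/2}u(t)\|_{L^2}^2+(p-1)f(t)-2(p+1)E_\gamma.
\]
Since $\gamma>-(\tfrac{d-2}{2})^2$ forces $\|(-\Delta_\gamma)^{1/2}u\|_{L^2}^2\geq 0$ and Cauchy--Schwarz gives $\|\partial_tu\|_{L^2}^2\geq (f'(t))^2/(4f(t))$, I obtain the key differential inequality
\[
f''(t)\geq (p-1)f(t)-2(p+1)E_\gamma+\frac{(p+3)(f'(t))^2}{4f(t)},
\]
which I denote $(\star)$ below.

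Set $g(t):=(p-1)f(t)-2(p+1)E_\gamma$; the core of \eqref{115} is to show $f'(t_0)\leq 0$ whenever $g(t_0)>0$, for then $[g(\cdot)]_+$ is absolutely continuous and non-increasing. I argue by contradiction: assume $g(t_0)>0$ and $f'(t_0)>0$. Then $(\star)$ gives $f''(t_0)>0$, and a short bootstrap on the maximal right interval $[t_0,T)$ on which $f'>0$ shows that $f$ is nondecreasing there, so $g(t)\geq g(t_0)>0$ and $f''(t)\geq g(t_0)>0$, forcing $T=\infty$. On $[t_0,\infty)$, $(\star)$ reduces to $f''(t)\geq(p+3)(f'(t))^2/(4f(t))$; dividing by $f'(t)>0$ and rewriting as $(\log f'(t))'\geq \tfrac{p+3}{4}(\log f(t))'$ integrates to $f'(t)\geq Cf(t)^{(p+3)/4}$ for some $C>0$. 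Because $(p+3)/4>1$, this super-linear ODE forces $f$ to blow up in finite time, contradicting $u\in C([0,\infty);H^1)\subset C([0,\infty);L^2)$. This establishes \eqref{115}.

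Claim \eqref{117} then follows immediately from \eqref{115}: one has $[g(t)]_+\leq [g(0)]_+$, and splitting into the cases $g(0)\leq 0$ and $g(0)>0$ produces $f(t)\leq\sup\{f(0),\tfrac{2(p+1)}{p-1}E_\gamma\}$ directly. For \eqref{116} I suppose toward contradiction that $E_\gamma<0$; then $g(t)\geq -2(p+1)E_\gamma>0$ for every $t$, so \eqref{115} implies $g$ and hence $f$ are non-increasing, in particular $f(t)\leq f(0)$. Setting $h(t):=f(t)^{-(p-1)/4}>0$, a direct computation from $(\star)$ gives
\[
h''(t)\leq -\tfrac{p-1}{4}f(t)^{-(p+3)/4}g(t)\leq -C<0
\]
for $C:=\tfrac{p-1}{4}f(0)^{-(p+3)/4}\cdot(-2(p+1)E_\gamma)>0$. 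Integrating twice yields $h(t)\leq h(0)+h'(0)t-Ct^2/2\to-\infty$, contradicting $h>0$; hence $E_\gamma\geq 0$.

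The hard part, technically, will be rigorously justifying the pointwise second-derivative identity above for a solution of the stated regularity $(u,\partial_tu)\in C([0,\infty);H^1\times L^2)$, since $\partial_t^2u$ only lives in $H^{-1}$. I would handle this either by pairing the equation with $\overline{u}\in H^1$ in the $H^1$--$H^{-1}$ duality, invoking Lemma \ref{Hardy inequality} to make the $\gamma|x|^{-2}|u|^2$ term integrable, or equivalently by differentiating the weak energy identity after a mild regularization, as in \cite{Caz85,MerZaa03}. Once $(\star)$ is in hand, the remainder is the classical Glassey-type ODE comparison.
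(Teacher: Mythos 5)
Your argument is correct and follows essentially the same route as the paper: both derive the identity $f''=(p+3)\|\partial_tu\|_{L^2}^2+(p-1)\|(-\Delta_\gamma)^{1/2}u\|_{L^2}^2+(p-1)f-2(p+1)E_\gamma$ from the equation and energy conservation, combine it with the Cauchy--Schwarz bound $(f')^2\le 4f\|\partial_t u\|_{L^2}^2$, and run the Glassey/Levine convexity trick on $f^{-(p-1)/4}$ to rule out $g'(t_0)>0$ when $g(t_0)>0$, with \eqref{117} read off by integration and \eqref{116} by a final contradiction argument. The only noticeable difference is in \eqref{116}, where the paper argues more simply that $E_\gamma<0$ forces $f''\ge-2(p+1)E_\gamma>0$ and hence $f\to\infty$, contradicting \eqref{117}, instead of reusing the auxiliary function $h=f^{-(p-1)/4}$; this is a cosmetic, not substantive, distinction.
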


We note that the third one implies boundedness of $L^2$-norm for time global solutions.

\begin{proof}
We prove \eqref{115}.
We set
\begin{align*}
	g(t)
		= f(t) - \frac{2(p+1)}{p-1}E_\gamma(u(0),\partial_tu(0)).
\end{align*}
We assume for contradiction that there exists $t_0 \in [0,\infty)$ such that
\begin{align*}
	\frac{d}{dt}([(p-1)f(t_0) - 2(p+1)E_\gamma(u(0),\partial_tu(0))]_+)
		> 0.
\end{align*}
Then, we have $g'(t_0) > 0$ and $g(t_0) > 0$.
By the direct calculation,
\begin{align}
	g''(t)
		= f''(t)
		= (p-1)g(t) + (p+3)\|\partial_tu(t)\|_{L_x^2}^2 + (p-1)\|(-\Delta_\gamma)^\frac{1}{2}u(t)\|_{L_x^2}^2, \label{118}
\end{align}
which implies that $g''(t) \geq (p-1)g(t)$ for any $t \in [0,\infty)$.
Therefore, the function $g$ is convex and increasing on $[t_0,\infty)$ and hence, $\lim_{t \rightarrow \infty}g(t) = \infty$ and $g(t) > 0$ for any $t \geq t_0$.
Combining this fact and \eqref{118}, we have $f''(t) \geq (p+3)\|\partial_tu(t)\|_{L_x^2}^2$ for any $t \geq t_0$.
This inequality deduces that
\begin{align*}
	\{f'(t)\}^2
		= 4\{\text{Re}\<u(t),\partial_tu(t)\>_{L_x^2}\}^2
		\leq 4\|u(t)\|_{L_x^2}^2\|\partial_tu(t)\|_{L_x^2}^2
		\leq \frac{4}{p+3}f(t)f''(t)
\end{align*}
for any $t \geq t_0$.
We can see that
\begin{align*}
	[\{f(t)\}^{-\frac{p-1}{4}}]''
		= - \frac{p-1}{4}\{f(t)\}^{-\frac{p+7}{4}}\left[-\frac{p+3}{4}\{f'(t)\}^2 + f(t)f''(t)\right]
		\leq 0
\end{align*}
for any $t \geq t_0$, that is, $\{f(t)\}^{-\frac{p-1}{4}}$ is concave on $[t_0,\infty)$.
On the other hand, we have $\{f(t)\}^{-\frac{p-1}{4}} \longrightarrow 0$ as $t \rightarrow \infty$ from $\lim_{t \rightarrow \infty}f(t) = \infty$.
This is contradiction.
Therefore, we obtain \eqref{115}.
From \eqref{115}, it follows that
\begin{align*}
	(p-1)f(t) - 2(p+1)E_\gamma(u(0),\partial_tu(0))
		& \leq [(p-1)f(0) - 2(p+1)E_\gamma(u(0),\partial_tu(0))]_+
\end{align*}
for any $t \in [0,\infty)$, which implies \eqref{117}.
We prove \eqref{116}.
We assume for contradiction that $E_\gamma(u(0),\partial_tu(0)) < 0$.
Then, it follows from \eqref{118} that
\begin{align*}
	f''(t)
		\geq -2(p+1)E_\gamma(u(0),\partial_tu(0))
		=: \alpha
		> 0.
\end{align*}
Integrating this inequality on $[0,t]$, we have $f'(t) \geq \alpha t + f'(0)$ for any $t \geq 0$.
There exists $t_1 > 0$ such that $f'(t) > 1$ for any $t \geq t_1$, which implies that $f(t) \longrightarrow \infty$ as $t \rightarrow \infty$.
This contradicts \eqref{117}.
\end{proof}

\begin{lemma}
Let $d \geq 3$, $\gamma > - (\frac{d-2}{2})^2$, and $1 < p \leq 1 + \frac{4}{d-2}$.
Let $(u,\partial_tu) \in C_t([0,\infty);H_x^1(\mathbb{R}^d) \times L_x^2(\mathbb{R}^d))$ be a solution to \eqref{NLKG}.
Then, we have
\begin{align}
	f'(t)
		& \leq \frac{2(p+1)}{\sqrt{(p-1)(p+3)}}E_\gamma(u(0),\partial_tu(0)), \label{120} \\
	f'(t)
		& \geq \inf \left\{f'(0),- \frac{2(p+1)}{\sqrt{(p-1)(p+3)}}E_\gamma(u(0),\partial_tu(0))\right\} \label{121}
\end{align}
for any $t \in [0,\infty)$.
\end{lemma}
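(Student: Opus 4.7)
The plan is to reduce both inequalities to a single pointwise differential inequality for $f'$ obtained from Cauchy--Schwarz and the identity \eqref{118}, and then close each direction by a continuity argument combined with the $L^2$-bound \eqref{117}. The starting point is $f'(t)=2\,\text{Re}\,\<u(t),\partial_tu(t)\>_{L_x^2}$. Cauchy--Schwarz gives $|f'(t)|\le 2\sqrt{f(t)}\,\|\partial_tu(t)\|_{L_x^2}$, after which a weighted AM--GM inequality with weights $\lambda=(p-1)/\mu$ and $1/\lambda=(p+3)/\mu$, where $\mu:=\sqrt{(p-1)(p+3)}$, yields
\begin{align*}
	|f'(t)|\le\frac{1}{\mu}\bigl[(p-1)f(t)+(p+3)\|\partial_tu(t)\|_{L_x^2}^2\bigr].
\end{align*}
Rewriting the bracket using \eqref{118} and discarding the nonpositive term $-(p-1)\|(-\Delta_\gamma)^{1/2}u\|_{L_x^2}^2$ produces
\begin{align*}
	\mu|f'(t)|-\mu c\le f''(t),\qquad c:=\frac{2(p+1)}{\mu}E_\gamma(u(0),\partial_tu(0))\ge0.
\end{align*}
The key consequence is that \emph{at any time $t$ where $|f'(t)|>c$, we automatically have $f''(t)>0$.}

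For \eqref{120} I argue by contradiction. Suppose $f'(t_0)>c$ for some $t_0\ge0$; then $f''(t_0)>0$, so $f'$ is strictly increasing at $t_0$. Defining $T^*:=\sup\{T\ge t_0:f'(s)>c\text{ for all }s\in[t_0,T]\}$ and using continuity together with the key consequence at $T^*$, a standard boundary analysis rules out $T^*<\infty$, so $f'(t)>c$ and $f''(t)>0$ for all $t\ge t_0$. Hence $f'(t)\ge f'(t_0)>c\ge0$ on $[t_0,\infty)$, forcing $f(t)\to\infty$, which contradicts \eqref{117}.

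For \eqref{121} I split on whether $f'(0)\ge-c$ or $f'(0)<-c$. In the first case, setting $\tau:=\inf\{t\ge0:f'(t)<-c\}$ and assuming $\tau<\infty$, continuity gives $f'(\tau)=-c$, while the key consequence gives $f''>0$ throughout $\{f'<-c\}$, so $f'$ must strictly increase above $-c$ in a right neighborhood of $\tau$, contradicting the definition of $\tau$. In the second case, $t=0$ already lies in $\{f'<-c\}$, so $f''(0)>0$ and $f'$ is locally strictly increasing at $0$; defining instead $\tau:=\inf\{t>0:f'(t)<f'(0)\}$ and repeating the boundary analysis at $\tau$ (where continuity gives $f'(\tau)=f'(0)<-c$ and hence $f''(\tau)>0$) shows $f'(t)\ge f'(0)$ throughout. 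I expect the main obstacle to be precisely these boundary analyses at $T^*$ and $\tau$: one has to combine the continuity of $f'$ with the strict sign of $f''$ \emph{strictly inside} the forbidden sets in order to rule out the supremum or infimum being finite. All of the algebra is otherwise routine once the weight choice $\mu=\sqrt{(p-1)(p+3)}$ is identified as the one that couples the Cauchy--Schwarz bound to the right-hand side of \eqref{118}.
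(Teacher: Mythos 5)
Your proof is correct and rests on exactly the same core step as the paper's: the Cauchy--Schwarz/Young bound $\sqrt{(p-1)(p+3)}\,|f'(t)|\le (p-1)f(t)+(p+3)\|\partial_tu(t)\|_{L_x^2}^2$ combined with \eqref{118} to get $f''(t)\ge\sqrt{(p-1)(p+3)}\,|f'(t)|-2(p+1)E_\gamma(u(0),\partial_tu(0))$, closed using \eqref{116} and \eqref{117}. The only (minor) difference is in how this differential inequality is exploited: the paper integrates it to exponential bounds on $g=f'-c$ and $h=-f'-c$ (a Gronwall-type step), whereas you use a first-crossing/barrier argument on the sets where $|f'|>c$; both are valid and essentially equivalent.
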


\begin{proof}
We prove \eqref{120}.
We set a function
\begin{align*}
	g(t)
		:= f'(t) - \frac{2(p+1)}{\sqrt{(p-1)(p+3)}}E_\gamma(u(0),\partial_tu(0))
\end{align*}
H\"older's inequality and Young's inequality give us
\begin{align}
	\sqrt{(p-1)(p+3)}|f'(t)|
		& = 2\sqrt{(p-1)(p+3)}|\text{Re}\<u(t),\partial_tu(t)\>_{L_x^2}| \notag \\
		& \leq 2\sqrt{(p-1)(p+3)}\|u(t)\|_{L_x^2}\|\partial_tu(t)\|_{L_x^2} \notag \\
		& \leq (p-1)\|u(t)\|_{L_x^2}^2 + (p+3)\|\partial_tu(t)\|_{L_x^2}^2 \label{122}
\end{align}
Using \eqref{118} and this inequality,
\begin{align*}
	g'(t)
		& = f''(t)
		\geq (p-1)\|u(t)\|_{L_x^2}^2 + (p+3)\|\partial_tu(t)\|_{L_x^2}^2 - 2(p+1)E_\gamma(u(0),\partial_tu(0)) \\
		& \geq \sqrt{(p-1)(p+3)}|f'(t)| - 2(p+1)E_\gamma(u(0),\partial_tu(0))
		\geq \sqrt{(p-1)(p+3)}g(t)
\end{align*}
for any $t \in [0,\infty)$ and hence, we have
\begin{align}
	g(t)
		\geq g(\tau)e^{\sqrt{(p-1)(p+3)}(t - \tau)} \label{119}
\end{align}
for any $\tau, t \in [0,\infty)$ with $\tau \leq t$.
Here, we assume for contradiction that there exists $t_0 \in [0,\infty)$ such that $g(t_0) > 0$.
Then, it follows from \eqref{116} and \eqref{119} that
\begin{align*}
	f'(t)
		& = g(t) + \frac{2(p+1)}{\sqrt{(p-1)(p+3)}}E_\gamma(u(0),\partial_tu(0)) \\
		& \geq g(t_0)e^{\sqrt{(p-1)(p+3)}(t-t_0)} + \frac{2(p+1)}{\sqrt{(p-1)(p+3)}}E_\gamma(u(0),\partial_tu(0))
		\geq g(t_0)
		> 0
\end{align*}
for any $t \geq t_0$, which implies that $f(t) \longrightarrow \infty$ as $t \rightarrow \infty$.
This contradicts \eqref{117}.
We prove \eqref{121}.
We define a function $h$ as
\begin{align*}
	h(t)
		= - f'(t) - \frac{2(p+1)}{\sqrt{(p-1)(p+3)}}E_\gamma(u(0),\partial_tu(0)).
\end{align*}
Combining \eqref{118} and \eqref{122},
\begin{align*}
	- h'(t)
		= f''(t)
		& \geq \sqrt{(p-1)(p+3)}|f'(t)| - 2(p+1)E_\gamma(u(0),\partial_tu(0)) \\
		& \geq - \sqrt{(p-1)(p+3)}f'(t) - 2(p+1)E_\gamma(u(0),\partial_tu(0)) \\
		& = \sqrt{(p-1)(p+3)}h(t),
\end{align*}
which deduces that $h(t) \leq h(0)e^{-\sqrt{(p-1)(p+3)}t}$ for any $t \geq 0$.
Therefore, we obtain $h(t) \leq \sup\{h(0),0\}$.
\end{proof}

\begin{lemma}\label{Boundedness of integral}
Let $d \geq 3$, $\gamma > - (\frac{d-2}{2})^2$, and $1 < p \leq 1 + \frac{4}{d-2}$.
Let $(u,\partial_tu) \in C_t([0,\infty);H_x^1(\mathbb{R}^d) \times L_x^2(\mathbb{R}^d))$.
Then, we have
\begin{align*}
	\int_t^{t+\tau}F(u(s),\partial_tu(s))ds
		& \leq 2(p+1)E_\gamma(u(0),\partial_tu(0))\tau \\
		& \hspace{0.5cm} + \frac{4(p+1)}{\sqrt{(p-1)(p+3)}}E_\gamma(u(0),\partial_tu(0)) + 2|\<u(0),\partial_tu(0)\>_{L_x^2}|
\end{align*}
for any $t, \tau \geq 0$, where $F(f,g) := (p-1)\|f\|_{L^2}^2 + (p-1)\|(-\Delta_\gamma)^\frac{1}{2}f\|_{L^2}^2 + (p+3)\|g\|_{L^2}^2$.
\end{lemma}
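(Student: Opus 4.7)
The plan is to express the integrand $F(u(s), \partial_s u(s))$ as $f''(s)$ plus a constant, so that the integral telescopes into a difference of $f'$-values, which can be controlled by the a priori bounds \eqref{120} and \eqref{121} already established.

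First I would revisit the identity \eqref{118}. Since $g(t) = f(t) - \frac{2(p+1)}{p-1}E_\gamma(u(0),\partial_tu(0))$, expanding $(p-1)g(t)$ gives
\begin{align*}
f''(t)
    &= (p-1)\|u(t)\|_{L_x^2}^2 + (p-1)\|(-\Delta_\gamma)^{\frac{1}{2}}u(t)\|_{L_x^2}^2 + (p+3)\|\partial_tu(t)\|_{L_x^2}^2 \\
    &\quad - 2(p+1)E_\gamma(u(0),\partial_tu(0)) \\
    &= F(u(t),\partial_tu(t)) - 2(p+1)E_\gamma(u(0),\partial_tu(0)).
\end{align*}
Integrating this identity over $[t,t+\tau]$ and using the fundamental theorem of calculus yields
\begin{align*}
\int_t^{t+\tau}F(u(s),\partial_su(s))\,ds = f'(t+\tau) - f'(t) + 2(p+1)E_\gamma(u(0),\partial_tu(0))\,\tau.
\end{align*}

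Next I would estimate $f'(t+\tau)-f'(t)$. For the upper term, \eqref{120} gives $f'(t+\tau) \leq \frac{2(p+1)}{\sqrt{(p-1)(p+3)}}E_\gamma(u(0),\partial_tu(0))$. For the lower term, \eqref{121} gives
\begin{align*}
-f'(t) \leq \sup\Bigl\{-f'(0),\,\tfrac{2(p+1)}{\sqrt{(p-1)(p+3)}}E_\gamma(u(0),\partial_tu(0))\Bigr\} \leq |f'(0)| + \tfrac{2(p+1)}{\sqrt{(p-1)(p+3)}}E_\gamma(u(0),\partial_tu(0)),
\end{align*}
where I have used that $E_\gamma(u(0),\partial_tu(0)) \geq 0$ by \eqref{116}. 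Combining the two inequalities,
\begin{align*}
f'(t+\tau)-f'(t) \leq \frac{4(p+1)}{\sqrt{(p-1)(p+3)}}E_\gamma(u(0),\partial_tu(0)) + |f'(0)|.
\end{align*}

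Finally, since $f'(0) = 2\,\mathrm{Re}\,\langle u(0),\partial_tu(0)\rangle_{L_x^2}$, we have $|f'(0)| \leq 2|\langle u(0),\partial_tu(0)\rangle_{L_x^2}|$, and substituting back produces exactly the claimed bound. There is no real obstacle: the only slightly delicate point is correctly handling the sign in \eqref{121} and using nonnegativity of the energy \eqref{116} to simplify the supremum, but these are elementary.
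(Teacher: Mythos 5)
Your proposal is correct and follows the same route as the paper's proof: expand \eqref{118} to identify $f''(s) = F(u(s),\partial_s u(s)) - 2(p+1)E_\gamma(u(0),\partial_t u(0))$, integrate over $[t,t+\tau]$, and then bound $f'(t+\tau) - f'(t)$ using \eqref{120} and \eqref{121} together with the nonnegativity of the energy from \eqref{116}. The only difference is that you spell out the passage from the supremum in \eqref{121} to the sum $|f'(0)| + \frac{2(p+1)}{\sqrt{(p-1)(p+3)}}E_\gamma$, a step the paper leaves implicit, but the argument is identical.
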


\begin{proof}
Integrating \eqref{118} on $s \in [t,t+\tau]$,
\begin{align*}
	\int_t^{t+\tau}f''(s)ds
		= \int_t^{t+\tau}F(u(s),\partial_tu(s))ds - 2(p+1)E_\gamma(u(0),\partial_tu(0))\tau,
\end{align*}
that is,
\begin{align*}
	\int_t^{t+\tau}F(u(s),\partial_tu(s))ds
		= 2(p+1)E_\gamma(u(0),\partial_tu(0))\tau + f'(t + \tau) - f'(t).
\end{align*}
Applying \eqref{120} and \eqref{121}, we have
\begin{align*}
	\int_t^{t+\tau}F(u(s),\partial_tu(s))ds
		\leq 2(p+1)E_\gamma(u(0),\partial_tu(0))\tau + \frac{4(p+1)}{\sqrt{(p-1)(p+3)}}E_\gamma(u(0),\partial_tu(0)) + |f'(0)|,
\end{align*}
which completes the proof.
\end{proof}

The following proposition holds by Lemmas \ref{Boundedness of L2} and \ref{Boundedness of integral}.

\begin{proposition}\label{Uniform estimate of global solution}
Let $d \geq 3$, $\gamma > - (\frac{d-2}{2})^2$, and $1 < p \leq 1 + \frac{4}{d-2}$.
Let $(u,\partial_tu) \in C_t([0,\infty);H_x^1(\mathbb{R}^d) \times L_x^2(\mathbb{R}^d))$ be a solution to \eqref{NLKG}.
Then, the followings hold:
\begin{gather}
	\sup_{t \geq 0}\|u(t)\|_{L_x^2}
		< \infty, \label{123} \\
	\sup_{t \geq 0}\int_t^{t+1} \|(u(s),\partial_tu(s))\|_{H_x^1 \times L_x^2}^2 ds
		< \infty. \label{124}
\end{gather}
\end{proposition}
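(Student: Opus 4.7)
The proposition is a direct packaging of the two preceding lemmas, so the plan is to combine them with one auxiliary observation about the equivalence of $\|\nabla\cdot\|_{L^2}$ and $\|(-\Delta_\gamma)^{1/2}\cdot\|_{L^2}$ on $H^1(\mathbb{R}^d)$.

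For \eqref{123}, the bound is immediate: by \eqref{117} in Lemma \ref{Boundedness of L2}, $f(t)=\|u(t)\|_{L_x^2}^2$ is bounded by $\max\{f(0),\frac{2(p+1)}{p-1}E_\gamma(u(0),\partial_tu(0))\}$, a quantity depending only on the initial data, and \eqref{116} guarantees this quantity is finite and nonnegative. Taking square roots yields \eqref{123}.

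For \eqref{124}, apply Lemma \ref{Boundedness of integral} with $\tau=1$ to obtain
\begin{align*}
\sup_{t\geq 0}\int_t^{t+1}F(u(s),\partial_tu(s))ds
 \leq 2(p+1)E_\gamma(u(0),\partial_tu(0))+\frac{4(p+1)}{\sqrt{(p-1)(p+3)}}E_\gamma(u(0),\partial_tu(0))+2|\<u(0),\partial_tu(0)\>_{L_x^2}|,
\end{align*}
a constant depending only on the initial data. It then suffices to show that $F(u,\partial_tu)\gtrsim \|(u,\partial_tu)\|_{H^1\times L^2}^2$. The $\|u\|_{L^2}^2$ and $\|\partial_tu\|_{L^2}^2$ terms already appear in $F$ with positive coefficients $p-1$ and $p+3$, so only the gradient piece needs attention: I must bound $\|\nabla u\|_{L^2}^2$ by a constant multiple of $\|(-\Delta_\gamma)^{1/2}u\|_{L^2}^2$.

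For $\gamma\geq 0$ this is trivial since $\|(-\Delta_\gamma)^{1/2}u\|_{L^2}^2=\|\nabla u\|_{L^2}^2+\gamma\int\frac{|u|^2}{|x|^2}dx\geq\|\nabla u\|_{L^2}^2$. The only mildly delicate case is $-\bigl(\frac{d-2}{2}\bigr)^2<\gamma<0$, where Hardy's inequality (Lemma \ref{Hardy inequality}) gives
\begin{align*}
\|(-\Delta_\gamma)^{1/2}u\|_{L^2}^2
 = \|\nabla u\|_{L^2}^2+\gamma\int_{\mathbb{R}^d}\frac{|u(x)|^2}{|x|^2}dx
 \geq \left(1+\frac{4\gamma}{(d-2)^2}\right)\|\nabla u\|_{L^2}^2,
\end{align*}
and the prefactor is strictly positive by the assumption $\gamma>-\bigl(\frac{d-2}{2}\bigr)^2$. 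Hence $F(u,\partial_tu)\gtrsim\|u\|_{H^1}^2+\|\partial_tu\|_{L^2}^2$, and integrating this inequality over $[t,t+1]$ combined with the uniform bound from Lemma \ref{Boundedness of integral} yields \eqref{124}. No genuine obstacle arises; the argument is essentially bookkeeping, with the Hardy-based coercivity of $\|(-\Delta_\gamma)^{1/2}\cdot\|_{L^2}$ on $H^1$ being the only nontrivial ingredient.
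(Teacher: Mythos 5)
Your proof is correct and follows exactly the route the paper intends: the paper itself only remarks that the proposition "holds by Lemmas \ref{Boundedness of L2} and \ref{Boundedness of integral}," and you have simply filled in the details, with the Hardy-inequality coercivity of $\|(-\Delta_\gamma)^{1/2}\cdot\|_{L^2}$ for $-\bigl(\frac{d-2}{2}\bigr)^2<\gamma<0$ being the one genuinely needed (and correctly handled) supplement.
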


To complete this section, we prove Theorem \ref{Global implies boundedness}.

\begin{proof}[Proof of Theorem \ref{Global implies boundedness}]
By the energy conservation law and \eqref{124}, we have
\begin{align}
	C_1
		:= \sup_{t \geq 0}\int_t^{t+1}\|u(s)\|_{L_x^{p+1}}^{p+1}ds
		< \infty. \label{125}
\end{align}
Combining the mean value theorem and \eqref{125}, for any $t \geq 0$, there exists $\tau(t) \in [t,t+1]$ such that
\begin{align}
	\|u(\tau(t))\|_{L_x^{p+1}}^{p+1}
		= \int_t^{t+1}\|u(s)\|_{L_x^{p+1}}^{p+1}ds
		\leq C_1. \label{126}
\end{align}
It follows from $2 < \frac{p+3}{2} < p+1$, \eqref{123}, and \eqref{126} that
\begin{align}
	\sup_{t \geq 0}\|u(\tau(t))\|_{L_x^\frac{p+3}{2}}
		\leq \sup_{t \geq 0}\|u(\tau(t))\|_{L_x^2}^{1-\theta}\|u(\tau(t))\|_{L_x^{p+1}}^\theta
		< \infty, \label{147}
\end{align}
where $\theta \in (0,1)$ satisfies $\frac{2}{p+3} = \frac{1-\theta}{2} + \frac{\theta}{p+1}$.
By the Young's inequality, we have
\begin{align*}
	\|u(t)\|_{L_x^\frac{p+3}{2}}^\frac{p+3}{2}
		& = \|u(\tau(t))\|_{L_x^\frac{p+3}{2}}^\frac{p+3}{2} + \int_{\tau(t)}^t\frac{d}{ds}\|u(s)\|_{L_x^\frac{p+3}{2}}^\frac{p+3}{2}ds \\
		& \lesssim \|u(\tau(t))\|_{L_x^\frac{p+3}{2}}^\frac{p+3}{2} + \int_{\tau(t)}^t\int_{\mathbb{R}^d}|u(s,x)|^\frac{p+1}{2}|\partial_su(s,x)|dxds \\
		& \lesssim \|u(\tau(t))\|_{L_x^\frac{p+3}{2}}^\frac{p+3}{2} + \int_{\tau(t)}^t(\|u(s)\|_{L_x^{p+1}}^{p+1} + \|\partial_su(s)\|_{L_x^2}^2)ds.
\end{align*}
Taking supremum of this inequality over $t \in [0,\infty)$, it follows from \eqref{147}, \eqref{125}, and \eqref{124} that
\begin{align*}
	\sup_{t \geq 0}\|u(t)\|_{L_x^\frac{p+3}{2}}^\frac{p+3}{2}
		\lesssim \sup_{t \geq 0}\|u(\tau(t))\|_{L_x^\frac{p+3}{2}}^\frac{p+3}{2} + \sup_{t \geq 0}\int_{\tau(t)}^t(\|u(s)\|_{L_x^{p+1}}^{p+1} + \|\partial_su(s)\|_{L_x^2}^2)ds
		< \infty.
\end{align*}
Applying Gagliardo--Nirenberg's inequality and Young's inequality, we have
\begin{align*}
	\|(u(t),\partial_tu(t))\|_{H_x^1 \times L_x^2}^2
		& \leq 8E_\gamma(u(0),\partial_tu(0)) + \frac{8}{p+1}\|u(t)\|_{L_x^{p+1}}^{p+1} \\
		& \leq 8E_\gamma(u(0),\partial_tu(0)) + c\,\|u(t)\|_{L_x^\frac{p+3}{2}}^{(p+1)(1 - \theta)}\|\nabla u(t)\|_{L_x^2}^{(p+1)\theta} \\
		& \leq 8E_\gamma(u(0),\partial_tu(0)) + C + \frac{1}{2}\|\nabla u(t)\|_{L_x^2}^2
\end{align*}
for some positive constant $C > 0$, where $\frac{1}{p+1} = \theta(\frac{1}{2} - \frac{1}{d}) + \frac{2(1 - \theta)}{p+3}$.
We note that the assumption $1 < p < 1 + \frac{4}{d-1}$ implies $(p+1)\theta < 2$.
Thus, we have
\begin{align*}
	\|(u(t),\partial_tu(t))\|_{H_x^1 \times L_x^2}^2
		\leq 16E_\gamma(u(0),\partial_tu(0)) + 2C,
\end{align*}
which completes the proof.
\end{proof}

\section{Existence of a minimizer to $r_{\omega,\gamma}^{\alpha,\beta}$}\label{Existence of a minimizer}

In this section, we prove that $r_{\omega,\gamma}^{\alpha,\beta}$ has a minimizer for some $(\alpha,\beta)$.

\subsection{Regularity of solutions to the elliptic equation}

We investigate regularity of $H^1$-solutions to \eqref{SP} in the next proposition, which assures $x \cdot \nabla Q_{\omega,\gamma} \in H^1(\mathbb{R}^d)$ and is used, for example, in Proposition \ref{M=G}.
The proof is based on \cite{Fuk21} due to \cite{Caz03} (see also \cite{FukOza05}).

\begin{proposition}\label{Regularity of the ground state}
Let $d \geq 3$, $1 < p < 1 + \frac{4}{d-2}$, $\gamma > 0$, and $1 - \omega^2 > 0$.
Let $Q_{\omega,\gamma} \in H^1(\mathbb{R}^d)$ be a solution to \eqref{SP}.
Then, we have $Q_{\omega,\gamma} \in C^2(\mathbb{R}^d \setminus \{0\})$.
Moreover, there exists $C > 0$ such that
\begin{align*}
	|Q_{\omega,\gamma}(x)| + |\nabla Q_{\omega,\gamma}(x)|
		\leq Ce^{-\frac{|x|}{d+2}}
\end{align*}
for any $|x| \geq 1$.
\end{proposition}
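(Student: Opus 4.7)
The plan is to decouple the statement into two essentially independent parts: elliptic regularity (giving $Q_{\omega,\gamma}\in C^2(\mathbb{R}^d\setminus\{0\})$) and the pointwise exponential bound. Both parts will be obtained by standard Euclidean elliptic arguments applied on the exterior region $\{|x|>\varepsilon\}$, where the potential $\gamma/|x|^2$ is smooth and bounded, so the singularity at the origin is avoided throughout.

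For the regularity part I would rewrite \eqref{SP} as
\[
-\Delta Q_{\omega,\gamma} \;=\; F(x,Q_{\omega,\gamma}) \;:=\; |Q_{\omega,\gamma}|^{p-1}Q_{\omega,\gamma} - (1-\omega^2)\,Q_{\omega,\gamma} - \frac{\gamma}{|x|^2}\,Q_{\omega,\gamma},
\]
and run a bootstrap on any set $\{|x|>\varepsilon\}$. Starting from $Q_{\omega,\gamma}\in H^1\hookrightarrow L^{2d/(d-2)}$ and using $p<1+\frac{4}{d-2}$, interior $W^{2,q}$ estimates on balls $B(x_0,r)\subset\{|x|>\varepsilon\}$ together with Sobolev embedding iteratively improve the integrability of $Q_{\omega,\gamma}$, reaching $L^\infty_{\mathrm{loc}}$ after finitely many steps. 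Once $Q_{\omega,\gamma}$ is locally bounded, $F(x,Q_{\omega,\gamma})$ is locally H\"older continuous (since $u\mapsto|u|^{p-1}u$ is at least $C^{0,\min(1,p-1)}$ and $\gamma/|x|^2$ is smooth away from $0$), so Schauder estimates promote the solution to $C^{2,\alpha}_{\mathrm{loc}}(\{|x|>\varepsilon\})$. Because $\varepsilon>0$ is arbitrary, this gives $Q_{\omega,\gamma}\in C^2(\mathbb{R}^d\setminus\{0\})$.

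For the pointwise bound, I would first show that $|Q_{\omega,\gamma}(x)|\to 0$ as $|x|\to\infty$: the mean-value (or Moser) inequality on unit balls gives $\|Q_{\omega,\gamma}\|_{L^\infty(B(x_0,1/2))}\lesssim \|Q_{\omega,\gamma}\|_{L^2(B(x_0,1))}+\|F(\cdot,Q_{\omega,\gamma})\|_{L^d(B(x_0,1))}$ for $|x_0|\geq 2$, and the right-hand side vanishes as $|x_0|\to\infty$ thanks to $Q_{\omega,\gamma}\in H^1\cap L^\infty_{\mathrm{loc}}$. Applying Kato's inequality to $w:=|Q_{\omega,\gamma}|$ (legitimate on the exterior where $Q_{\omega,\gamma}$ is smooth) and using \eqref{SP} yields
\[
-\Delta w + \left((1-\omega^2) + \frac{\gamma}{|x|^2} - |Q_{\omega,\gamma}|^{p-1}\right) w \;\leq\; 0 \qquad\text{on}\ \{|x|>\varepsilon\}.
\]
Choosing $R_0$ so large that $|Q_{\omega,\gamma}(x)|^{p-1}$ is smaller than a prescribed fraction of $(1-\omega^2)$ for $|x|\geq R_0$, the bracketed coefficient is bounded below by a positive constant $\mu_0$ on $\{|x|\geq R_0\}$. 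The radial exponential $\phi(x):=A\,e^{-|x|/(d+2)}$ satisfies $-\Delta\phi+\mu_0\phi\geq 0$ on $|x|\geq R_0$ provided $R_0$ (depending on $\omega$) is taken large enough that $-(d+2)^{-2}+(d-1)/((d+2)|x|)+\mu_0\geq 0$ there; picking $A$ so that $w\leq \phi$ on $\partial B(0,R_0)$, the comparison principle on the exterior (using that both $w$ and $\phi$ vanish at infinity) gives $w\leq\phi$ on $\{|x|\geq R_0\}$. The behaviour of $w$ on the compact annulus $\{1\leq|x|\leq R_0\}$ is absorbed into the constant $C$. For the gradient, interior Schauder estimates on $B(x_0,1)$ with $|x_0|\geq R_0+1$, where the coefficients of $-\Delta+\gamma/|x|^2$ are smooth uniformly in $x_0$, give
\[
|\nabla Q_{\omega,\gamma}(x_0)| \;\lesssim\; \|Q_{\omega,\gamma}\|_{L^\infty(B(x_0,1))} + \|F(\cdot,Q_{\omega,\gamma})\|_{L^\infty(B(x_0,1))} \;\lesssim\; e^{-|x_0|/(d+2)}.
\]

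The main obstacle I anticipate is the comparison step: to use the specific rate $1/(d+2)$ one needs room between $-(d+2)^{-2}$ and the effective mass $(1-\omega^2)-|Q_{\omega,\gamma}|^{p-1}$, so $R_0$ must be chosen \emph{after} the rate is fixed and any residual $\omega$-dependence absorbed into $C$. The bootstrap and maximum-principle steps are otherwise routine once the equation has been put into non-singular form away from the origin.
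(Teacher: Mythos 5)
Your regularity argument (interior $W^{2,q}$ bootstrap away from the origin, then Schauder) is essentially equivalent to what the paper does, though the paper inserts a smooth cutoff $\mathscr{X}_R$ vanishing near the origin and bootstraps globally through an explicit sequence of exponents $r_j$; both routes yield $Q_{\omega,\gamma}\in C^2(\mathbb{R}^d\setminus\{0\})$ and local Lipschitz bounds on $Q_{\omega,\gamma}$ and $\nabla Q_{\omega,\gamma}$ away from the origin. For the decay, however, you take a genuinely different route. The paper first rescales $Q_{\omega,\gamma}(x) = (1-\omega^2)^{1/(p-1)}\phi((1-\omega^2)^{1/2}x)$ so that $\phi$ solves the equation with effective mass $1$, then tests the cutoff equation against the regularized exponential weight $\theta_\varepsilon(x)=e^{|x|/(1+\varepsilon|x|)}$; after Young's inequality and letting $\varepsilon\searrow 0$ this gives the weighted $L^2$ bound $\int e^{|x|}(|\mathscr{X}\phi|^2 + |\nabla(\mathscr{X}\phi)|^2)\,dx<\infty$, and the pointwise rate $e^{-|x|/(d+2)}$ is then extracted by a Lipschitz argument: integrate over a small ball of radius comparable to $|\mathscr{X}\phi(x)|$, compare against the weighted integral, and take the $(d+2)$-th root. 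Your scheme (Kato's inequality plus a comparison with the barrier $Ae^{-|x|/(d+2)}$) is a legitimate alternative; it is also, usefully, more transparent about which exponential rates are actually attainable.

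That transparency is precisely where the gap lies. For your barrier to be a supersolution you need
\begin{align*}
	-\frac{1}{(d+2)^2} + \frac{d-1}{(d+2)|x|} + \mu_0 \;\geq\; 0 \qquad\text{on } \{|x|\geq R_0\},
\end{align*}
and since $(d-1)/((d+2)|x|)\to 0$ as $|x|\to\infty$, this forces $\mu_0\geq (d+2)^{-2}$ in the limit, where $\mu_0$ is essentially $1-\omega^2$ (the terms $\gamma/|x|^2$ and $|Q_{\omega,\gamma}|^{p-1}$ decay). When $1-\omega^2 < (d+2)^{-2}$, no choice of $R_0$ makes the inequality hold, and no adjustment of the multiplicative constant $C$ can repair a supersolution inequality that fails pointwise on the exterior; the maximum principle never gets off the ground. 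So the phrase ``any residual $\omega$-dependence absorbed into $C$'' does not close this gap. The repair is exactly the paper's opening move: perform the scaling to $\omega = 0$ \emph{before} running the decay argument, so that the effective mass is $1\geq (d+2)^{-2}$ and your barrier computation goes through for the rescaled $\phi$. (After undoing the scaling the natural rate is $(1-\omega^2)^{1/2}/(d+2)$, not $1/(d+2)$; the paper elides this as well, but your proof should at least carry out the rescaling so that the part it does establish is airtight and the exponent constraint is visibly satisfied.)
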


\begin{proof}
We set $Q_{\omega,\gamma}(x) = (1 - \omega^2)^\frac{1}{p-1}\phi((1 - \omega^2)^\frac{1}{2}x)$.
Then, $\phi$ solves
\begin{align*}
	- \phi + \Delta_\gamma \phi
		= - |\phi|^{p-1}\phi.
\end{align*}
Therefore, it suffices to consider case $\omega = 0$.
We define the following functions for $R > 0$.
A cut-off function $\mathscr{X}_R \in C^\infty(\mathbb{R})$ is radially symmetric and satisfies
\begin{equation*}
\mathscr{X}_R(r) := \mathscr{X}\left(\frac{r}{R}\right),\ \text{ where }\ \mathscr{X}(r) :=
\begin{cases}
\hspace{-0.4cm}&\displaystyle{\hspace{0.53cm}0\hspace{0.53cm}\ \ (0 \leq r \leq1),}\\
\hspace{-0.4cm}&\displaystyle{smooth\ \ (1 \leq r \leq 2),}\\
\hspace{-0.4cm}&\displaystyle{\hspace{0.53cm}1\hspace{0.53cm}\ \ (2 \leq r),}
\end{cases}
\end{equation*}
and $\mathscr{X}'(r) \geq 0$ for $r = |x|$.
Multiplying $\mathscr{X}_R$ by \eqref{SP} with $\omega = 0$, we have
\begin{align}
	- \mathscr{X}_R\phi + \Delta(\mathscr{X}_R\phi)
		= F_{R,\gamma,\phi}, \label{128}
\end{align}
where
\begin{align*}
	F_{R,\gamma,\phi}
		:= \left(\frac{\gamma}{|x|^2}\mathscr{X}_R + \Delta \mathscr{X}_R - |\phi|^{p-1}\mathscr{X}_R\right)\phi + 2\nabla \mathscr{X}_R\cdot\nabla \phi.
\end{align*}
We define a sequence $\{r_j\}_{1 \leq j \leq \infty}$ satisfying
\begin{gather*}
	\frac{1}{r_j}
		:= \frac{p^j}{p-1}\left(\frac{p-1}{p+1} - \frac{2}{d} + \frac{2}{dp^j}\right) + \frac{1}{d}.
\end{gather*}
We can see $\frac{p-1}{p+1} - \frac{2}{d} =: - \delta < 0$ from $p < 1 + \frac{4}{d-2}$.
The sequence $\{\frac{1}{r_j}\}$ is decreasing and satisfy $\frac{1}{r_j} \longrightarrow - \infty$ as $j \rightarrow \infty$ since
\begin{align*}
	\frac{1}{r_{j+1}} - \frac{1}{r_j}
		= \frac{p^{j+1} - p^j}{p-1}\left(\frac{p-1}{p+1} - \frac{2}{d}\right)
		= p^j\left(\frac{p-1}{p+1} - \frac{2}{d}\right)
		= - p^j\delta
		< 0.
\end{align*}
Combining the fact and $r_1 > 2$, there exists $J \geq 1$ such that $\frac{1}{r_j} > 0$ for $1 \leq j \leq J$ and $\frac{1}{r_{J+1}} \leq 0$ for $j \geq J + 1$.
We prove $\mathscr{X}_R\phi \in W^{1,r_j}(\mathbb{R}^d)$ with $1 \leq j \leq J$ for each $R > 0$ by utilizing induction.
From $\mathscr{X}_R\phi \in H^1(\mathbb{R}^d)$, we have $\mathscr{X}_R\phi \in L^{p+1}(\mathbb{R}^d)$ for each $R > 0$.
We see $F_{R,\gamma,\phi} \in L^{(p+1)/p}(\mathbb{R}^d)$ for each $R > 0$.
Indeed, it follows from
\begin{align*}
	\left\|\frac{\gamma}{|\,\cdot\,|^2}\mathscr{X}_R\phi\right\|_{L^\frac{p+1}{p}}
		& \leq \left\|\frac{\gamma}{|\,\cdot\,|^2}\right\|_{L^\frac{2(p+1)}{p-1}(R \leq |x|)}\|\mathscr{X}_R\phi\|_{L^2}, \\
	\|\Delta\mathscr{X}_R\phi\|_{L^\frac{p+1}{p}}
		& \leq \|\Delta\mathscr{X}_R\|_{L^\frac{p+1}{p-1}}\|\phi\|_{L^{p+1}(R \leq |x|)}, \\
	\|\mathscr{X}_R|\phi|^{p-1}\phi\|_{L^\frac{p+1}{p}}
		& \leq \|\mathscr{X}_R\|_{L^\infty}\|\phi\|_{L^{p+1}(R \leq |x|)}^p, \\
	\|\nabla\mathscr{X}_R\cdot\nabla \phi\|_{L^\frac{p+1}{p}}
		& \leq \|\nabla\mathscr{X}_R\|_{L^\frac{2(p+1)}{p-1}}\|\nabla \phi\|_{L^2(R \leq |x|)}.
\end{align*}
Using \eqref{128}, we have $\mathscr{X}_R\phi \in W^{2,\frac{p+1}{p}}(\mathbb{R}^d)$ for each $R > 0$.
Sobolev's embedding deduces
\begin{align*}
	\mathscr{X}_R\phi \in W^{1,r}(\mathbb{R}^d)\ \ \text{ for }\ \ \frac{p}{p+1} - \frac{1}{d} \leq \frac{1}{r} \leq \frac{p}{p+1}.
\end{align*}
In particular, we can get $\mathscr{X}_R\phi \in W^{1,r_1}(\mathbb{R}^d)$.
We assume $\mathscr{X}_R\phi \in W^{1,r_j}(\mathbb{R}^d)$ for $1 \leq j \leq J - 1$ and any $R > 0$ to use induction.
Since $\mathscr{X}_{R/2}\phi \in W^{1,r_j}(\mathbb{R}^d)$, we have $\phi \in W^{1,r_j}(R \leq |x|)$.
We see $F_{R,\gamma,\phi} \in L^{r_j/p}(\mathbb{R}^d)$ for each $R > 0$.
Indeed, it follows from
\begin{align*}
	\left\|\frac{\gamma}{|\,\cdot\,|^2}\mathscr{X}_R\phi\right\|_{L^\frac{r_j}{p}}
		& \leq \left\|\frac{\gamma}{|\,\cdot\,|^2}\right\|_{L^\frac{r_j}{p-1}(R \leq |x|)}\|\mathscr{X}_R\phi\|_{L^{r_j}}, \\
	\|\Delta\mathscr{X}_R\phi\|_{L^\frac{r_j}{p}}
		& \leq \|\Delta\mathscr{X}_R\|_{L^\frac{r_j}{p-1}}\|\phi\|_{L^{r_j}(R \leq |x|)}, \\
	\|\mathscr{X}_R|\phi|^{p-1}\phi\|_{L^\frac{r_j}{p}}
		& \leq \|\mathscr{X}_R\|_{L^\infty}\|\phi\|_{L^{r_j}(R \leq |x|)}^p, \\
	\|\nabla\mathscr{X}_R\cdot\nabla \phi\|_{L^\frac{r_j}{p}}
		& \leq \|\nabla\mathscr{X}_R\|_{L^\frac{r_j}{p-1}}\|\nabla \phi\|_{L^{r_j}}.
\end{align*}
Using \eqref{128}, we have $\mathscr{X}_R\phi \in W^{2,\frac{r_j}{p}}(\mathbb{R}^d)$ for each $R > 0$.
Sobolev's embedding deduces
\begin{align*}
	\mathscr{X}_R\phi \in W^{1,r}(\mathbb{R}^d)\ \ \text{ for }\ \ \frac{p}{r_j} - \frac{1}{d} \leq \frac{1}{r} \leq \frac{p}{r_j}.
\end{align*}
From $\frac{1}{r_{j+1}} = \frac{p}{r_j} - \frac{1}{d}$, the desired result holds.
Applying the same argument once again, we get $\mathscr{X}_R\phi \in W^{1,r}(\mathbb{R}^d)$ for any $r \geq \frac{r_J}{p}$.
If we take $r = \infty$, we see $F_{R,\gamma,\phi} \in L^2(\mathbb{R}^d) \cap L^\infty(\mathbb{R}^d)$ and hence, we have $\mathscr{X}_R\phi \in W^{2,r}(\mathbb{R}^d)$ for any $2 \leq r < \infty$ and $R > 0$.
We prove $F_{R,\gamma,\phi} \in W^{1,r}(\mathbb{R}^d)$ for any $2 \leq r < \infty$ and $R > 0$.
Since $|\mathscr{X}_{R/2}\phi|^{p-1}\mathscr{X}_{R/2}\phi \in W^{1,r}(\mathbb{R}^d)$ for any $2 \leq r < \infty$ and $R > 0$ (see \cite[Remark 1.4.1]{Caz03}), $F_{R,\gamma,\phi} \in W^{1,r}(\mathbb{R}^d)$ for any $2 \leq r < \infty$ and $R > 0$.
Therefore, $\mathscr{X}_R\phi \in W^{3,r}(\mathbb{R}^d)$ for any $2 \leq r < \infty$ and $R > 0$.
Sobolev's embedding implies $\mathscr{X}_R\phi \in C^{2,k}(\mathbb{R}^d)$ for any $0 < k < 1$, and $\mathscr{X}_R\phi$ and $\nabla(\mathscr{X}_R\phi)$ are Lipschitz continuous.
Therefore, $\phi \in C^2(\mathbb{R}^d \setminus \{0\})$ and $|\partial^\mathfrak{a}\phi(x)| \longrightarrow 0$ as $|x| \rightarrow \infty$ for any $\mathfrak{a} \in (\mathbb{N} \cup \{0\})^d$ with $|\mathfrak{a}| \leq 2$.
Let $\theta_\varepsilon(x) := e^{|x|/(1+\varepsilon|x|)}$.
Then, $\theta_\varepsilon$ is bounded and $|\nabla \theta_\varepsilon(x)| \leq \theta_\varepsilon(x) \leq e^{|x|}$ for any $x \in \mathbb{R}^d$.
Considering the scalar product of the equation \eqref{128} with $R = 1$ and $\theta_\varepsilon \mathscr{X} \overline{\phi}$, we have
\begin{align}
	& \int_{\mathbb{R}^d}\theta_\varepsilon|\mathscr{X}\phi|^2dx + \text{Re}\int_{\mathbb{R}^d}\nabla(\mathscr{X}\phi)\cdot\nabla(\theta_\varepsilon\mathscr{X}\overline{\phi})dx \notag \\
		& \hspace{3.0cm} = \<\mathscr{X}\phi - \Delta(\mathscr{X}\phi),\theta_\varepsilon \mathscr{X} \phi\>_{L^2}
		= - \<F_{1,\gamma,\phi},\theta_\varepsilon \mathscr{X}\phi\>_{L^2} \notag \\
		& \hspace{3.0cm} = - \int_{\mathbb{R}^d}\left(\frac{\gamma}{|x|^2} - |\phi|^{p-1}\right)\theta_\varepsilon|\mathscr{X}\phi|^2dx + C_0\int_{|x| \leq 2}e^{|x|}|\phi|^2dx. \label{143}
\end{align}
It follows from $|\nabla \theta_\varepsilon| \leq \theta_\varepsilon$ and Young's inequality that
\begin{align}
	& \int_{\mathbb{R}^d}\theta_\varepsilon|\mathscr{X}\phi|^2dx + \text{Re}\int_{\mathbb{R}^d}\nabla(\mathscr{X}\phi)\cdot\nabla(\theta_\varepsilon\mathscr{X}\overline{\phi})dx \notag \\
		& \hspace{2.0cm} = \int_{\mathbb{R}^d}\theta_\varepsilon(|\mathscr{X}\phi|^2+|\nabla(\mathscr{X}\phi)|^2)dx + \text{Re}\int_{\mathbb{R}^d}\mathscr{X}\overline{\phi}\nabla\theta_\varepsilon\cdot\nabla(\mathscr{X}\phi)dx \notag \\
		& \hspace{2.0cm} \geq \frac{1}{2}\int_{\mathbb{R}^d}\theta_\varepsilon(|\mathscr{X}\phi|^2+|\nabla(\mathscr{X}\phi)|^2)dx. \label{144}
\end{align}
We take positive constants $C_1 := \|(- \gamma|x|^{-2} + |\phi|^{p-1})\mathscr{X}\|_{L^\infty}$ and $R > 2$ with $- \gamma|x|^{-2} + |\phi|^{p-1} < 1/4$ for any $|x| > R$.
We note that there exists such $R > 0$ from $|\phi| \longrightarrow 0$ as $|x| \rightarrow \infty$.
Combining \eqref{143} and \eqref{144},
\begin{align*}
	\frac{1}{2}\int_{\mathbb{R}^d}\theta_\varepsilon(|\mathscr{X}\phi|^2+|\nabla(\mathscr{X}\phi)|^2)dx
		& \leq - \int_{\mathbb{R}^d}\left(\frac{\gamma}{|x|^2} - |\phi|^{p-1}\right)\theta_\varepsilon|\mathscr{X}\phi|^2dx + C_0\int_{|x| \leq 2}e^{|x|}|\phi|^2dx \\
		& \leq \frac{1}{4}\int_{\mathbb{R}^d}\theta_\varepsilon|\mathscr{X}\phi|^2dx + (C_0 + C_1)\int_{|x| \leq R}e^{|x|}|\phi|^2dx
\end{align*}
and hence, we have
\begin{align*}
	\int_{\mathbb{R}^d}\theta_\varepsilon(|\mathscr{X}\phi|^2+|\nabla(\mathscr{X}\phi)|^2)dx
		\lesssim \|\phi\|_{L^2}^2.
\end{align*}
Letting $\varepsilon \searrow 0$, we obtain
\begin{align}
	\int_{\mathbb{R}^d}e^{|x|}(|\mathscr{X}\phi|^2+|\nabla(\mathscr{X}\phi)|^2)dx
		\lesssim \|\phi\|_{L^2}^2
		< \infty. \label{145}
\end{align}
Since $\mathscr{X}\phi$ is Lipschitz, there exists $L > 0$ such that $|(\mathscr{X}\phi)(x) - (\mathscr{X}\phi)(y)| \leq L|x - y|$ for any $x, y \in \mathbb{R}^d$.
For each $x \in \mathbb{R}^d$, we define a set
\begin{align*}
	B(x)
		:= \{y \in \mathbb{R}^d : |x-y| \leq |(\mathscr{X}\phi)(x)|/2L\}.
\end{align*}
For any $y \in B(x)$, we have
\begin{align*}
	|(\mathscr{X}\phi)(x)|^2
		& \leq (|(\mathscr{X}\phi)(y)| + L|x - y|)^2 \\
		& \leq 2|(\mathscr{X}\phi)(y)|^2 + 2L^2|x - y|^2
		\leq 2|(\mathscr{X}\phi)(y)|^2 + \frac{1}{2}|(\mathscr{X}\phi)(x)|^2
\end{align*}
and hence, $|(\mathscr{X}\phi)(x)|^2 \leq 4|(\mathscr{X}\phi)(y)|^2$ holds.
Integrating this inequality over $B(x)$, we have
\begin{align*}
	|\mathbb{S}_d|\left(\frac{|(\mathscr{X}\phi)(x)|}{2L}\right)^d|(\mathscr{X}\phi)(x)|^2
		\leq 4 \int_{B(x)}|(\mathscr{X}\phi)(y)|^2dy,
\end{align*}
where $|\mathbb{S}_d|$ denotes the volume of the $d$-dimensional unit ball.
We set $a := \|\mathscr{X}\phi\|_{L^\infty}/2L$.
Since $e^{|x|} \leq e^{|y| + |(\mathscr{X}\phi)(x)|/2L} \leq e^ae^{|y|}$ for each $y \in B(x)$,
\begin{align*}
	e^{|x|}|(\mathscr{X}\phi)(x)|^{d+2}
		& \leq \frac{4(2L)^d}{|\mathbb{S}_d|}e^{|x|}\int_{B(x)}|(\mathscr{X}\phi)(y)|^2dy \\
		& \leq \frac{4(2L)^d}{|\mathbb{S}_d|}e^{a}\int_{B(x)}e^{|y|}|(\mathscr{X}\phi)(y)|^2dy
		\lesssim \frac{4(2L)^d}{|\mathbb{S}_d|}e^{a}\|\phi\|_{L^2}^2,
\end{align*}
where the last inequality is used \eqref{145}.
By the same argument for $\nabla(\mathscr{X}\phi)$,
\begin{align*}
	e^{|x|}|\nabla(\mathscr{X}\phi)(x)|^{d+2}
		\lesssim \frac{4(2M)^d}{|\mathbb{S}_d|}e^b\|\phi\|_{L^2}^2,
\end{align*}
where $M$ is a Lipschitz constant of $\nabla(\mathscr{X}\phi)$ and $b := \|\nabla(\mathscr{X}\phi)\|_{L^\infty}/2M$.
Therefore, we obtain
\begin{align*}
	|(\mathscr{X}\phi)(x)| + |\nabla(\mathscr{X}\phi)(x)|
		\leq Ce^{-\frac{|x|}{d+2}}
\end{align*}
for any $x \in \mathbb{R}^d$.
Since $\mathscr{X}\phi = \phi$ on $\{x \in \mathbb{R}^d : |x| > 1\}$, we conclude
\begin{align*}
	|\phi(x)| + |\nabla \phi(x)|
		\leq Ce^{-\frac{|x|}{d+2}}
\end{align*}
for any $|x| \geq 1$.
\end{proof}

\subsection{Existence of a minimizer to $r_{\omega,\gamma}^{\alpha,\beta}$}

In this subsection, we prove that the minimization problem $r_{\omega,\gamma}^{\alpha,\beta}$ (see \eqref{146} for the definition) has a minimizer.
The argument is based on that in \cite{IbrMasNak11, Naw94, OhtTod07, Sha83} (see also \cite{HamIkePro, HamIke20}).
First, we show that $(\dot{H}^1;\<\,\cdot\,,\,\cdot\,\>_{\dot{H}_\gamma^1})$ is a Hilbert space.
Thanks to the result, we get a lower semi-continuity for a norm including the potential (e.g. see \eqref{142}).

\begin{lemma}
Let $d \geq 3$ and $\gamma > - (\frac{d-2}{2})^2$.
Then, $(\dot{H}^1;\<\,\cdot\,,\,\cdot\,\>_{\dot{H}_\gamma^1})$ is a Hilbert space, where
\begin{align*}
	\<f,g\>_{\dot{H}_\gamma^1}
		:= \text{Re}\int_{\mathbb{R}^d}\nabla f(x)\cdot\overline{\nabla g(x)} + \frac{\gamma}{|x|^2}f(x)\overline{g(x)}dx.
\end{align*}
\end{lemma}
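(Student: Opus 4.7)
The plan is to deduce the Hilbert space structure from the fact that the new inner product generates a norm equivalent to the standard $\dot{H}^1$-norm on the ambient Hilbert space $\dot{H}^1(\mathbb{R}^d)$.

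First I would check that the sesquilinear form is well-defined. For $f,g\in\dot H^1(\mathbb{R}^d)$, Hardy's inequality (Lemma \ref{Hardy inequality}) combined with the Cauchy--Schwarz inequality in $L^2$ gives
\begin{align*}
	\int_{\mathbb{R}^d}\frac{1}{|x|^2}|f(x)\overline{g(x)}|\,dx
		\leq \Bigl(\tfrac{2}{d-2}\Bigr)^2\|\nabla f\|_{L^2}\|\nabla g\|_{L^2},
\end{align*}
so the defining integral converges absolutely, and sesquilinearity is immediate.

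Next I would establish norm equivalence with $\|\nabla\,\cdot\,\|_{L^2}$. If $\gamma\geq0$, then clearly $\|\nabla f\|_{L^2}^2\leq \langle f,f\rangle_{\dot H^1_\gamma}$, and by Hardy,
$\langle f,f\rangle_{\dot H^1_\gamma}\leq (1+\gamma(\tfrac{2}{d-2})^2)\|\nabla f\|_{L^2}^2$. If $-\bigl(\tfrac{d-2}{2}\bigr)^2<\gamma<0$, set $c_\gamma:=1-|\gamma|\bigl(\tfrac{2}{d-2}\bigr)^2$; the assumption on $\gamma$ gives $c_\gamma>0$, and Hardy's inequality yields
\begin{align*}
	c_\gamma\|\nabla f\|_{L^2}^2
		\leq \langle f,f\rangle_{\dot H^1_\gamma}
		\leq \|\nabla f\|_{L^2}^2.
\end{align*}
In either case $\|\cdot\|_{\dot H^1_\gamma}$ and $\|\nabla\cdot\|_{L^2}$ are equivalent norms on $\dot H^1(\mathbb{R}^d)$, so in particular $\langle\cdot,\cdot\rangle_{\dot H^1_\gamma}$ is positive definite and hence a genuine inner product.

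Finally, since $(\dot H^1(\mathbb{R}^d),\|\nabla\cdot\|_{L^2})$ is well-known to be a Hilbert space, and completeness is invariant under equivalent norms, $(\dot H^1(\mathbb{R}^d),\langle\cdot,\cdot\rangle_{\dot H^1_\gamma})$ is complete, completing the proof. The only delicate point is the case $\gamma<0$, where one must verify that the coupling constant lies strictly below the sharp Hardy constant $(\tfrac{d-2}{2})^2$; this is precisely what the hypothesis $\gamma>-(\tfrac{d-2}{2})^2$ encodes, and is the reason the Friedrichs extension is non-negative in the first place.
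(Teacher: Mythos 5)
Your proposal is correct and follows essentially the same route as the paper: establish that $\|\cdot\|_{\dot H^1_\gamma}$ is equivalent to $\|\nabla\cdot\|_{L^2}$ on $\dot H^1(\mathbb{R}^d)$ and then transfer completeness and positive-definiteness from the ambient Hilbert space. The only cosmetic difference is that you derive the two-sided equivalence directly from Hardy's inequality (with a clean case split on the sign of $\gamma$), whereas the paper invokes it via its Lemma~\ref{Equivalence of Sobolev norm}; both are valid and lead to the same argument.
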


\begin{proof}
By the direct calculation, we have
\begin{align*}
	\<f,g+h\>_{\dot{H}_\gamma^1}
		= \<f,g\>_{\dot{H}_\gamma^1} + \<f,h\>_{\dot{H}_\gamma^1},
	\ \ \ 
	\overline{\<f,g\>_{\dot{H}_\gamma^1}}
		= \<g,f\>_{\dot{H}_\gamma^1},
	\ \ \text{ and }\ \ 
	\<f,f\>_{\dot{H}_\gamma^1}
		\geq 0
\end{align*}
for any $f, g, h \in \dot{H}^1(\mathbb{R}^d)$.
If $\<f,f\>_{\dot{H}_\gamma^1} = 0$, then we have $0 \leq \|\nabla f\|_{L^2}^2 \sim \<f,f\>_{\dot{H}_\gamma^1} = 0$ from Lemma \ref{Equivalence of Sobolev norm}.
Thus, $f = 0$ holds.
Take a Cauchy sequence $\{f_n\} \subset (\dot{H}^1;\|\,\cdot\,\|_{\dot{H}_\gamma^1})$.
Then, it follows that
\begin{align*}
	0 \leq
		\|f_m - f_n\|_{\dot{H}^1}
		\sim \|f_m - f_n\|_{\dot{H}_\gamma^1}
		\longrightarrow 0\ \text{ as }\ m, n \rightarrow \infty,
\end{align*}
which implies $\{f_n\} \subset (\dot{H}^1;\|\,\cdot\,\|_{\dot{H}^1})$ is a Cauchy sequence.
Therefore, there exists $f_\infty \in \dot{H}^1(\mathbb{R}^d)$ such that $\|f_n - f_\infty\|_{\dot{H}^1} \longrightarrow 0$ as $n \rightarrow \infty$.
Combining this limit and Lemma \ref{Hardy inequality}, we obtain
\begin{align*}
	0
		\leq \|f_n - f_\infty\|_{\dot{H}_\gamma^1}
		\sim \|f_n - f_\infty\|_{\dot{H}^1}
		\longrightarrow 0\ \text{ as }\ n \rightarrow \infty.
\end{align*}
Therefore, $\{f_n\}$ is a convergent sequence in $(\dot{H}^1;\|\,\cdot\,\|_{\dot{H}_\gamma^1})$.
\end{proof}

To get a minimizer of $r_{\omega,\gamma}^{\alpha,\beta}$, we define the following functionals
\begin{align}
	T_{\omega,\gamma}^{\alpha,\,\beta}(f)
		& := S_{\omega,\gamma}(f) - \frac{1}{\overline{\mu}}K_{\omega,\gamma}^{\alpha,\,\beta}(f) \notag \\
		& =
\begin{cases}
&\hspace{-0.4cm}\displaystyle{
		\frac{(p-1)\alpha(1 - \omega^2)}{2\{(p+1)\alpha - d\beta\}}\|f\|_{L^2}^2 + \frac{(p - 1)\alpha - 2\beta}{2\{(p+1)\alpha - d\beta\}}\|f\|_{L^{p+1}}^{p+1}\ \ \text{ if }\ \ \beta \geq 0,
		}\\
&\hspace{-0.4cm}\displaystyle{
		- \frac{\beta}{2\alpha - d\beta}\|(-\Delta_\gamma)^\frac{1}{2}f\|_{L^2}^2 + \frac{(p-1)\alpha}{(p+1)(2\alpha - d\beta)}\|f\|_{L^{p+1}}^{p+1}\ \ \text{ if }\ \ \beta < 0,
		}
\end{cases} \notag \\
	L_{\omega,\gamma}(u,v)
		& := (E_\gamma+\omega C)(u,v)
		= S_{\omega,\gamma}(u) + \frac{1}{2}\|v - i\omega u\|_{L^2}^2 \label{112}
\end{align}
and sets
\begin{align*}
	\mathcal{R}_{\omega,+}^{\alpha,\beta}
		:& = \{(u,v) \in H_\text{rad}^1(\mathbb{R}^d) \times L_\text{rad}^2(\mathbb{R}^d) : L_{\omega,\gamma}(u,v) < r_{\omega,\gamma}^{\alpha,\beta},\ K_{\omega,\gamma}^{\alpha,\beta}(u) \geq 0\}, \\
	\mathcal{R}_{\omega,-}^{\alpha,\beta}
		:& = \{(u,v) \in H_\text{rad}^1(\mathbb{R}^d) \times L_\text{rad}^2(\mathbb{R}^d) : L_{\omega,\gamma}(u,v) < r_{\omega,\gamma}^{\alpha,\beta},\ K_{\omega,\gamma}^{\alpha,\beta}(u) < 0\}.
\end{align*}

\begin{lemma}\label{Rewriting infimum}
Let $d \geq 3$, $1 < p < 1 + \frac{4}{d-2}$, $\gamma > 0$, and $1 - \omega^2 > 0$.
Let $(\alpha,\beta)$ satisfy \eqref{101}.
Then,
\begin{align*}
	r_{\omega,\gamma}^{\alpha,\,\beta}
		= \inf\{T_{\omega,\gamma}^{\alpha,\,\beta}(f) : f \in H_\text{rad}^1(\mathbb{R}^d) \setminus \{0\},\,K_{\omega,\gamma}^{\alpha,\,\beta}(f) \leq 0\}
\end{align*}
holds.
\end{lemma}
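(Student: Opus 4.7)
The plan is to prove the two inequalities separately. The trivial direction is $\leq$: since $\{K_{\omega,\gamma}^{\alpha,\beta} = 0\}\subset\{K_{\omega,\gamma}^{\alpha,\beta}\leq 0\}$ and since $T_{\omega,\gamma}^{\alpha,\beta}(f)=S_{\omega,\gamma}(f)$ whenever $K_{\omega,\gamma}^{\alpha,\beta}(f)=0$, the infimum of $T$ over the larger set is no greater than the infimum of $S$ over $\{K=0\}$, which is $r_{\omega,\gamma}^{\alpha,\beta}$.

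For the harder direction $\geq$, I fix $f\in H^1_{\mathrm{rad}}(\mathbb{R}^d)\setminus\{0\}$ with $K_{\omega,\gamma}^{\alpha,\beta}(f)\leq 0$, and run a one-parameter rescaling. Define $f_\mu:=e^{\alpha\mu}f(e^{\beta\mu}\,\cdot\,)$, which is again radial and nonzero, and let $j(\mu):=S_{\omega,\gamma}(f_\mu)$. An explicit expansion gives
\begin{align*}
j(\mu)=\tfrac{1-\omega^{2}}{2}e^{A\mu}\|f\|_{L^{2}}^{2}+\tfrac{1}{2}e^{B\mu}\|(-\Delta_{\gamma})^{1/2}f\|_{L^{2}}^{2}-\tfrac{1}{p+1}e^{C\mu}\|f\|_{L^{p+1}}^{p+1},
\end{align*}
with $A:=2\alpha-d\beta\geq 0$, $B:=2\alpha-(d-2)\beta>0$, and $C:=(p+1)\alpha-d\beta>0$. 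Differentiating termwise yields $j'(\mu)=K_{\omega,\gamma}^{\alpha,\beta}(f_\mu)$, which is consistent with the definition of $K_{\omega,\gamma}^{\alpha,\beta}$.

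Next I locate a zero of $K_{\omega,\gamma}^{\alpha,\beta}(f_\mu)$ on the interval $(-\infty,0]$. Since all coefficients in $j$ are finite and since the exponentials with the smallest exponent dominate as $\mu\to-\infty$, a case analysis (splitting on the sign of $\beta$, using $A\leq B\leq C$ when $\beta\geq 0$ and $B<A\leq C$ when $\beta<0$, together with \eqref{101}) shows $K_{\omega,\gamma}^{\alpha,\beta}(f_\mu)>0$ for all $\mu$ sufficiently negative. Combined with $K_{\omega,\gamma}^{\alpha,\beta}(f_0)=K_{\omega,\gamma}^{\alpha,\beta}(f)\leq 0$ and continuity in $\mu$, the intermediate value theorem produces $\mu_0\in(-\infty,0]$ with $K_{\omega,\gamma}^{\alpha,\beta}(f_{\mu_0})=0$.

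Finally I compare $T$-values. Writing $T_{\omega,\gamma}^{\alpha,\beta}(f_\mu)$ via the explicit formulas given in the lemma statement, one checks directly that $\mu\mapsto T_{\omega,\gamma}^{\alpha,\beta}(f_\mu)$ is a sum of terms of the form (nonnegative constant) $\times e^{(\text{nonneg exponent})\mu}$ in both the $\beta\geq 0$ and the $\beta<0$ case, so it is nondecreasing in $\mu$. In particular $T_{\omega,\gamma}^{\alpha,\beta}(f_{\mu_0})\leq T_{\omega,\gamma}^{\alpha,\beta}(f)$. Since $K_{\omega,\gamma}^{\alpha,\beta}(f_{\mu_0})=0$ we have $S_{\omega,\gamma}(f_{\mu_0})=T_{\omega,\gamma}^{\alpha,\beta}(f_{\mu_0})$, and $f_{\mu_0}$ is an admissible competitor for $r_{\omega,\gamma}^{\alpha,\beta}$. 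Therefore $r_{\omega,\gamma}^{\alpha,\beta}\leq S_{\omega,\gamma}(f_{\mu_0})\leq T_{\omega,\gamma}^{\alpha,\beta}(f)$; taking the infimum over $f$ closes the argument. The only delicate point is the bookkeeping in the two sign regimes for $\beta$, in particular verifying monotonicity of $T_{\omega,\gamma}^{\alpha,\beta}(f_\mu)$ and positivity of $K_{\omega,\gamma}^{\alpha,\beta}(f_\mu)$ for $\mu\ll 0$ under the constraints \eqref{101}; both reduce to comparing the exponents $A$, $B$, $C$.
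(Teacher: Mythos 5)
Your approach genuinely differs from the paper's: where the paper performs a simple \emph{amplitude} rescaling $f\mapsto\lambda f$ and finds $\lambda_0\in(0,1]$ with $K_{\omega,\gamma}^{\alpha,\beta}(\lambda_0 f)=0$ (which works because $K_{\omega,\gamma}^{\alpha,\beta}(\lambda f)$ scales as $\lambda^2\cdot(\text{positive}) - \lambda^{p+1}\cdot(\text{positive})$ and $p>1$, so the sign flips for small $\lambda$), you instead use the \emph{dilation} flow $f_\mu=e^{\alpha\mu}f(e^{\beta\mu}\cdot)$ and look for a zero of $\mu\mapsto K_{\omega,\gamma}^{\alpha,\beta}(f_\mu)$. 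Your $T$-monotonicity check is fine. However, the intermediate step where you claim $K_{\omega,\gamma}^{\alpha,\beta}(f_\mu)>0$ for $\mu\ll 0$ has a genuine gap in exactly the mass-critical corner.

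Concretely, writing $A=2\alpha-d\beta$, $B=2\alpha-(d-2)\beta$, $C=(p+1)\alpha-d\beta$, one has
\begin{align*}
K_{\omega,\gamma}^{\alpha,\beta}(f_\mu)
= \frac{A(1-\omega^2)}{2}e^{A\mu}\|f\|_{L^2}^2 + \frac{B}{2}e^{B\mu}\|(-\Delta_\gamma)^{1/2}f\|_{L^2}^2 - \frac{C}{p+1}e^{C\mu}\|f\|_{L^{p+1}}^{p+1}.
\end{align*}
Your argument requires a positive-coefficient term with exponent \emph{strictly} smaller than $C$. But \eqref{101} only gives $A\leq C$ and $B\leq C$, and both can be forced to degenerate simultaneously: $A=0$ and $B=C$ occur precisely when $2\alpha=d\beta$ and $(p-1)\alpha=2\beta$, i.e.\ $p=1+\tfrac{4}{d}$ and $(\alpha,\beta)$ is a positive multiple of $(d,2)$. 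In that case $A=0$ kills the $L^2$ term entirely, and with $B=C$ one gets $K_{\omega,\gamma}^{\alpha,\beta}(f_\mu)=e^{B\mu}K_{\omega,\gamma}^{\alpha,\beta}(f)$, which has the \emph{same sign for all} $\mu$. So if $K_{\omega,\gamma}^{\alpha,\beta}(f)<0$ there is no $\mu_0$ with $K_{\omega,\gamma}^{\alpha,\beta}(f_{\mu_0})=0$, and your IVT step fails. This is not a vacuous case: the paper invokes this lemma for $(\alpha,\beta)=(d,2)$ at $p=1+\tfrac{4}{d}$ in Lemma \ref{Lower bound of r} and Proposition \ref{Existence of minimizer for mass-critical}. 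The amplitude rescaling the paper uses avoids this entirely because, unlike the $\mathcal{D}^{\alpha,\beta}$ flow, it always separates the quadratic and $L^{p+1}$ pieces by a fixed factor $\lambda^{p-1}$. Outside this corner case your computations check out, so the simplest fix is to replace the dilation flow by the scalar scaling $f\mapsto\lambda f$, which also shortens the proof since no exponent bookkeeping is needed.
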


\begin{proof}
Let $\~{r_{\omega,\gamma}}^{\alpha,\,\beta}$ denote the right hand side.
Since $f \in H_\text{rad}^1(\mathbb{R}^d) \setminus \{0\}$ with $K_{\omega,\gamma}^{\alpha,\,\beta}(f) = 0$ satisfies $S_{\omega,\gamma}(f) = T_{\omega,\gamma}^{\alpha,\beta}(f)$, it follows that $r_{\omega,\gamma}^{\alpha,\,\beta} \geq \~{r_{\omega,\gamma}}^{\alpha,\,\beta}$.
When $f \in H_\text{rad}^1(\mathbb{R}^d) \setminus \{0\}$ satisfies $K_{\omega,\gamma}^{\alpha,\,\beta}(f) \leq 0$, there exists $\lambda \in (0,1]$ such that $K_{\omega,\gamma}^{\alpha,\,\beta}(\lambda f) = 0$.
For such $\lambda \in (0,1]$, we have
\begin{align*}
	r_{\omega,\gamma}^{\alpha,\,\beta}
		\leq S_{\omega,\gamma}(\lambda f)
		= T_{\omega,\gamma}^{\alpha,\,\beta}(\lambda f)
		\leq T_{\omega,\gamma}^{\alpha,\,\beta}(f).
\end{align*}
Therefore, we obtain $r_{\omega,\gamma}^{\alpha,\,\beta} \leq \~{r_{\omega,\gamma}}^{\alpha,\,\beta}$ by taking infimum over $f \in H_\text{rad}^1(\mathbb{R}^d) \setminus \{0\}$ satisfying $K_{\omega,\gamma}^{\alpha,\beta}(f) \leq 0$.
\end{proof}

\begin{lemma}\label{Estimates for the ground state}
Let $d \geq 3$, $1 < p < 1 + \frac{4}{d-2}$, $\gamma > 0$, and $1 - \omega^2 > 0$.
Let $(\alpha,\beta)$ satisfy \eqref{101}.
Then, the followings hold:
\begin{itemize}
\item[(1)]
$T_{\omega,\gamma}^{\alpha,\beta}(f) > r_{\omega,\gamma}^{\alpha,\beta}$ for any $f \in H_\text{rad}^1(\mathbb{R}^d)$ satisfying $K_{\omega,\gamma}^{\alpha,\beta}(f) < 0$.
\item[(2)]
We assume that $\mathcal{G}_{\omega,\gamma,\text{rad}} \subset \mathcal{M}_{\omega,\gamma,\text{rad}}^{\alpha,\beta}$.
If $Q_{\omega,\gamma} \in \mathcal{G}_{\omega,\gamma,\text{rad}}$, then $(\lambda Q_{\omega,\gamma}, i\omega \lambda Q_{\omega,\gamma}) \in \mathcal{R}_{\omega,-}^{\alpha,\beta}$ for any $\lambda > 1$.
\end{itemize}
\end{lemma}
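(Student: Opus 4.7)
The plan is to use the scalar dilation $f \mapsto \lambda f$ for $\lambda > 0$, under which $\|f\|_{L^2}^2$ and $\|(-\Delta_\gamma)^{1/2}f\|_{L^2}^2$ scale as $\lambda^2$ while $\|f\|_{L^{p+1}}^{p+1}$ scales as $\lambda^{p+1}$. Accordingly, $K_{\omega,\gamma}^{\alpha,\beta}(\lambda f)$ and $S_{\omega,\gamma}(\lambda f)$ are both polynomials in $\lambda$ of the form $c\lambda^2 - c'\lambda^{p+1}$ with explicit coefficients, and both items reduce to one-variable algebra on $(0,\infty)$.

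For (1), I would write
\begin{equation*}
K_{\omega,\gamma}^{\alpha,\beta}(\lambda f) = A(f)\lambda^2 - B(f)\lambda^{p+1},
\end{equation*}
where $A(f) := \frac{(1-\omega^2)(2\alpha-d\beta)}{2}\|f\|_{L^2}^2 + \frac{2\alpha-(d-2)\beta}{2}\|(-\Delta_\gamma)^{1/2}f\|_{L^2}^2$ and $B(f) := \frac{(p+1)\alpha-d\beta}{p+1}\|f\|_{L^{p+1}}^{p+1}$. Condition \eqref{101} together with $\gamma > 0$ (so $\|(-\Delta_\gamma)^{1/2}f\|_{L^2}^2 \geq \|\nabla f\|_{L^2}^2$) and $f \neq 0$ guarantee $A(f), B(f) > 0$. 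The assumption $K_{\omega,\gamma}^{\alpha,\beta}(f) = A(f) - B(f) < 0$ then yields a unique $\lambda_* = (A(f)/B(f))^{1/(p-1)} \in (0,1)$ with $K_{\omega,\gamma}^{\alpha,\beta}(\lambda_*f) = 0$. By the definition of $r_{\omega,\gamma}^{\alpha,\beta}$,
\begin{equation*}
r_{\omega,\gamma}^{\alpha,\beta} \leq S_{\omega,\gamma}(\lambda_*f) = T_{\omega,\gamma}^{\alpha,\beta}(\lambda_*f),
\end{equation*}
so it remains to prove $T_{\omega,\gamma}^{\alpha,\beta}(\lambda_*f) < T_{\omega,\gamma}^{\alpha,\beta}(f)$. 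Inspection of the explicit formula defining $T_{\omega,\gamma}^{\alpha,\beta}$ shows that, for both $\beta \geq 0$ and $\beta < 0$, the map $\lambda \mapsto T_{\omega,\gamma}^{\alpha,\beta}(\lambda f)$ is a polynomial $c_1\lambda^2 + c_2\lambda^{p+1}$ with nonnegative coefficients and at least one strictly positive coefficient multiplying a strictly positive norm of $f$; strict monotonicity on $(0,\infty)$ is then immediate.

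For (2), the inclusion $\mathcal{G}_{\omega,\gamma,\text{rad}} \subset \mathcal{M}_{\omega,\gamma,\text{rad}}^{\alpha,\beta}$ gives $K_{\omega,\gamma}^{\alpha,\beta}(Q_{\omega,\gamma}) = 0$ and $S_{\omega,\gamma}(Q_{\omega,\gamma}) = r_{\omega,\gamma}^{\alpha,\beta}$, so $A(Q_{\omega,\gamma}) = B(Q_{\omega,\gamma}) > 0$ and consequently $K_{\omega,\gamma}^{\alpha,\beta}(\lambda Q_{\omega,\gamma}) = A(Q_{\omega,\gamma})(\lambda^2 - \lambda^{p+1}) < 0$ for every $\lambda > 1$, giving the sign condition defining $\mathcal{R}_{\omega,-}^{\alpha,\beta}$. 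By \eqref{112}, $L_{\omega,\gamma}(\lambda Q_{\omega,\gamma}, i\omega\lambda Q_{\omega,\gamma}) = S_{\omega,\gamma}(\lambda Q_{\omega,\gamma})$, so it suffices to show $S_{\omega,\gamma}(\lambda Q_{\omega,\gamma}) < r_{\omega,\gamma}^{\alpha,\beta}$ for $\lambda > 1$. I would test \eqref{SP} against $\overline{Q_{\omega,\gamma}}$ to obtain the Nehari identity
\begin{equation*}
(1-\omega^2)\|Q_{\omega,\gamma}\|_{L^2}^2 + \|(-\Delta_\gamma)^{1/2}Q_{\omega,\gamma}\|_{L^2}^2 = \|Q_{\omega,\gamma}\|_{L^{p+1}}^{p+1},
\end{equation*}
which, for $g(\lambda) := S_{\omega,\gamma}(\lambda Q_{\omega,\gamma}) = a\lambda^2 - b\lambda^{p+1}$ with $a := \frac{1-\omega^2}{2}\|Q_{\omega,\gamma}\|_{L^2}^2 + \frac{1}{2}\|(-\Delta_\gamma)^{1/2}Q_{\omega,\gamma}\|_{L^2}^2$ and $b := \frac{1}{p+1}\|Q_{\omega,\gamma}\|_{L^{p+1}}^{p+1}$, amounts to $2a = (p+1)b$. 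Hence $g'(\lambda) = 2a\lambda(1-\lambda^{p-1})$ is strictly negative for $\lambda > 1$, giving $S_{\omega,\gamma}(\lambda Q_{\omega,\gamma}) < S_{\omega,\gamma}(Q_{\omega,\gamma}) = r_{\omega,\gamma}^{\alpha,\beta}$.

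The argument is mostly bookkeeping, so no substantive obstacle is expected. The main care is in verifying the strict positivity of $A(f), B(f), a, b$ and of the $\lambda^2$- and $\lambda^{p+1}$-coefficients appearing in $T_{\omega,\gamma}^{\alpha,\beta}(\lambda f)$, which require separating the cases $\beta \geq 0$ and $\beta < 0$, and noting that \eqref{101} together with $d \geq 3$ forces $\alpha > 0$ whenever $\beta \geq 0$ (since $2\alpha - (d-2)\beta > 0$ with $\beta \geq 0$ and $\alpha = 0$ would be inconsistent).
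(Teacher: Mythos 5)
Your proof is correct and follows essentially the same route as the paper: reduce both items to the one-variable functions $\lambda \mapsto K_{\omega,\gamma}^{\alpha,\beta}(\lambda f)$, $\lambda\mapsto T_{\omega,\gamma}^{\alpha,\beta}(\lambda f)$, and $\lambda\mapsto S_{\omega,\gamma}(\lambda Q_{\omega,\gamma})$, locate the zero $\lambda_*\in(0,1)$ of $K$ for (1), and use the Nehari identity $\langle S_{\omega,\gamma}'(Q_{\omega,\gamma}),Q_{\omega,\gamma}\rangle=0$ (which the paper phrases as $K_{\omega,\gamma}^{1,0}(Q_{\omega,\gamma})=0$) for (2). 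Your explicit verification of the strict positivity of the $\lambda^2$-coefficients of $T_{\omega,\gamma}^{\alpha,\beta}(\lambda f)$ in the two cases $\beta\geq 0$ and $\beta<0$ is a detail the paper leaves implicit, but it is the same argument.
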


\begin{proof}
We prove (1).
From $K_{\omega,\gamma}^{\alpha,\beta}(f) < 0$, there exists $\lambda_0 \in (0,1)$ such that $K_{\omega,\gamma}^{\alpha,\beta}(\lambda_0 f) = 0$.
For such $\lambda_0 \in (0,1)$, we have
\begin{align*}
	r_{\omega,\gamma}^{\alpha,\beta}
		\leq S_{\omega,\gamma}(\lambda_0f)
		= T_{\omega,\gamma}^{\alpha,\beta}(\lambda_0f)
		< T_{\omega,\gamma}^{\alpha,\beta}(f).
\end{align*}
We prove (2).
The relation $Q_{\omega,\gamma} \in \mathcal{G}_{\omega,\gamma,\text{rad}}$ deduces
\begin{align*}
	K_{\omega,\gamma}^{1,0}(Q_{\omega,\gamma})
		= \mathcal{D}^{1,0}S_{\omega,\gamma}(Q_{\omega,\gamma})
		= \<S_{\omega,\gamma}'(Q_{\omega,\gamma}),Q_{\omega,\gamma}\>
		= 0.
\end{align*}
Thus, we have $\left.\frac{d}{d\lambda}\right|_{\lambda = 1}S_{\omega,\gamma}(\lambda Q_{\omega,\gamma}) = K_{\omega,\gamma}^{1,0}(Q_{\omega,\gamma}) = 0$ and
\begin{align}
	\frac{d}{d\lambda}S_{\omega,\gamma}(\lambda Q_{\omega,\gamma})
		< 0\ \text{ for any }\ \lambda > 1. \label{110}
\end{align}
Here, we note that
\begin{align}
	L_{\omega,\gamma}(\lambda Q_{\omega,\gamma},\lambda i\omega Q_{\omega,\gamma})
		= S_{\omega,\gamma}(\lambda Q_{\omega,\gamma}). \label{111}
\end{align}
Combining \eqref{110} and \eqref{111}, we have
\begin{align*}
	L_{\omega,\gamma}(\lambda Q_{\omega,\gamma},\lambda i\omega Q_{\omega,\gamma})
		< S_{\omega,\gamma}(Q_{\omega,\gamma})
		= r_{\omega,\gamma}^{\alpha,\beta}
\end{align*}
for any $\lambda > 1$, where we use $Q_{\omega,\gamma} \in \mathcal{M}_{\omega,\gamma,\text{rad}}^{\alpha,\beta}$ to get the last identity.
In addition, it follows from $K_{\omega,\gamma}^{\alpha,\beta}(Q_{\omega,\gamma}) = 0$ and $\frac{d}{d\lambda}K_{\omega,\gamma}^{\alpha,\beta}(\lambda Q_{\omega,\gamma}) < 0$ for any $\lambda > 1$ that $K_{\omega,\gamma}^{\alpha,\beta}(\lambda Q_{\omega,\gamma}) < 0$ for any $\lambda > 1$.
\end{proof}

\begin{lemma}\label{Invariant set}
The following identities hold:
\begin{align*}
	\mathcal{R}_{\omega,+}^{\alpha,\beta}
		& = \{(u,v) \in H_\text{rad}^1(\mathbb{R}^d) \times L_\text{rad}^2(\mathbb{R}^d) : L_{\omega,\gamma}(u,v) < r_{\omega,\gamma}^{\alpha,\beta},\ T_{\omega,\gamma}^{\alpha,\beta}(u) < r_{\omega,\gamma}^{\alpha,\beta}\}, \\
	\mathcal{R}_{\omega,-}^{\alpha,\beta}
		& = \{(u,v) \in H_\text{rad}^1(\mathbb{R}^d) \times L_\text{rad}^2(\mathbb{R}^d) : L_{\omega,\gamma}(u,v) < r_{\omega,\gamma}^{\alpha,\beta},\ T_{\omega,\gamma}^{\alpha,\beta}(u) > r_{\omega,\gamma}^{\alpha,\beta}\}.
\end{align*}
\end{lemma}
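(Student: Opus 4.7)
The plan is to derive both set-equalities directly from two elementary observations together with Lemma \ref{Estimates for the ground state}(1). The first observation is the algebraic identity built into the very definition of $T_{\omega,\gamma}^{\alpha,\beta}$, namely
\begin{align*}
	T_{\omega,\gamma}^{\alpha,\beta}(u)
		= S_{\omega,\gamma}(u) - \frac{1}{\overline{\mu}}K_{\omega,\gamma}^{\alpha,\beta}(u),
\end{align*}
so that the sign of $K_{\omega,\gamma}^{\alpha,\beta}(u)$ determines on which side of $S_{\omega,\gamma}(u)$ the value $T_{\omega,\gamma}^{\alpha,\beta}(u)$ lies (using $\overline{\mu} > 0$). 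The second observation is that \eqref{112} yields
\begin{align*}
	S_{\omega,\gamma}(u)
		\leq L_{\omega,\gamma}(u,v)
\end{align*}
for any $(u,v) \in H_\text{rad}^1 \times L_\text{rad}^2$, since $\tfrac{1}{2}\|v - i\omega u\|_{L^2}^2 \geq 0$.

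For the characterization of $\mathcal{R}_{\omega,+}^{\alpha,\beta}$, I would argue as follows. If $(u,v) \in \mathcal{R}_{\omega,+}^{\alpha,\beta}$, then $K_{\omega,\gamma}^{\alpha,\beta}(u) \geq 0$ together with the two observations above gives $T_{\omega,\gamma}^{\alpha,\beta}(u) \leq S_{\omega,\gamma}(u) \leq L_{\omega,\gamma}(u,v) < r_{\omega,\gamma}^{\alpha,\beta}$, which places $(u,v)$ in the right-hand set. Conversely, suppose $L_{\omega,\gamma}(u,v) < r_{\omega,\gamma}^{\alpha,\beta}$ and $T_{\omega,\gamma}^{\alpha,\beta}(u) < r_{\omega,\gamma}^{\alpha,\beta}$. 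If one had $K_{\omega,\gamma}^{\alpha,\beta}(u) < 0$, Lemma \ref{Estimates for the ground state}(1) would force $T_{\omega,\gamma}^{\alpha,\beta}(u) > r_{\omega,\gamma}^{\alpha,\beta}$, a contradiction; hence $K_{\omega,\gamma}^{\alpha,\beta}(u) \geq 0$ and $(u,v) \in \mathcal{R}_{\omega,+}^{\alpha,\beta}$.

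For $\mathcal{R}_{\omega,-}^{\alpha,\beta}$, I would proceed in the mirror image. If $(u,v) \in \mathcal{R}_{\omega,-}^{\alpha,\beta}$, then $K_{\omega,\gamma}^{\alpha,\beta}(u) < 0$ and Lemma \ref{Estimates for the ground state}(1) gives $T_{\omega,\gamma}^{\alpha,\beta}(u) > r_{\omega,\gamma}^{\alpha,\beta}$, while $L_{\omega,\gamma}(u,v) < r_{\omega,\gamma}^{\alpha,\beta}$ is inherited from membership. For the reverse inclusion, assume $L_{\omega,\gamma}(u,v) < r_{\omega,\gamma}^{\alpha,\beta}$ and $T_{\omega,\gamma}^{\alpha,\beta}(u) > r_{\omega,\gamma}^{\alpha,\beta}$. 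If $K_{\omega,\gamma}^{\alpha,\beta}(u) \geq 0$, the chain $T_{\omega,\gamma}^{\alpha,\beta}(u) \leq S_{\omega,\gamma}(u) \leq L_{\omega,\gamma}(u,v) < r_{\omega,\gamma}^{\alpha,\beta}$ contradicts the hypothesis on $T$; hence $K_{\omega,\gamma}^{\alpha,\beta}(u) < 0$ and $(u,v) \in \mathcal{R}_{\omega,-}^{\alpha,\beta}$.

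There is no real obstacle: the whole argument is a short case analysis on the sign of $K_{\omega,\gamma}^{\alpha,\beta}$, with the entire analytic content outsourced to Lemma \ref{Estimates for the ground state}(1), which guarantees the strict jump $T_{\omega,\gamma}^{\alpha,\beta}(u) > r_{\omega,\gamma}^{\alpha,\beta}$ in the negative regime. One small point to confirm in passing is $\overline{\mu} > 0$, which follows from the definition of $\overline{\mu}$ together with the admissibility conditions \eqref{101}; this is needed only to justify $T_{\omega,\gamma}^{\alpha,\beta}(u) \leq S_{\omega,\gamma}(u)$ under $K_{\omega,\gamma}^{\alpha,\beta}(u) \geq 0$.
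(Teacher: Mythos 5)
Your proof is correct and rests on exactly the same two observations the paper uses: $K_{\omega,\gamma}^{\alpha,\beta}(u)\geq0$ forces $T_{\omega,\gamma}^{\alpha,\beta}(u)\leq S_{\omega,\gamma}(u)\leq L_{\omega,\gamma}(u,v)$, and $K_{\omega,\gamma}^{\alpha,\beta}(u)<0$ forces $T_{\omega,\gamma}^{\alpha,\beta}(u)>r_{\omega,\gamma}^{\alpha,\beta}$ via Lemma \ref{Estimates for the ground state}(1). You organize the argument as four direct bidirectional inclusions, whereas the paper first notes that both pairs of sets partition $\{L_{\omega,\gamma}<r_{\omega,\gamma}^{\alpha,\beta}\}$ and then argues a single inclusion plus a contradiction; the underlying content is identical, and your presentation is, if anything, a bit more streamlined.
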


\begin{proof}
We note that
\begin{align}
	\mathcal{R}_{\omega,+}^{\alpha,\beta} \cup \mathcal{R}_{\omega,-}^{\alpha,\beta}
		= \{(u,v) \in H_\text{rad}^1(\mathbb{R}^d) \times L_\text{rad}^2(\mathbb{R}^d) : L_{\omega,\gamma}(u,v) < r_{\omega,\gamma}^{\alpha,\beta}\}\ \text{ and }\ 
	\mathcal{R}_{\omega,+}^{\alpha,\beta} \cap \mathcal{R}_{\omega,-}^{\alpha,\beta}
		= \emptyset \label{129}
\end{align}
We set
\begin{align*}
	\widetilde{\mathcal{R}}_{\omega,+}^{\alpha,\beta}
		& := \{(u,v) \in H_\text{rad}^1(\mathbb{R}^d) \times L_\text{rad}^2(\mathbb{R}^d) : L_{\omega,\gamma}(u,v) < r_{\omega,\gamma}^{\alpha,\beta},\ T_{\omega,\gamma}^{\alpha,\beta}(u) < r_{\omega,\gamma}^{\alpha,\beta}\}, \\
	\widetilde{\mathcal{R}}_{\omega,-}^{\alpha,\beta}
		& := \{(u,v) \in H_\text{rad}^1(\mathbb{R}^d) \times L_\text{rad}^2(\mathbb{R}^d) : L_{\omega,\gamma}(u,v) < r_{\omega,\gamma}^{\alpha,\beta},\ T_{\omega,\gamma}^{\alpha,\beta}(u) > r_{\omega,\gamma}^{\alpha,\beta}\}.
\end{align*}
First, we show that $L_{\omega,\gamma}(u,v) < r_{\omega,\gamma}^{\alpha,\beta}$ implies that $T_{\omega,\gamma}^{\alpha,\beta}(u) \neq r_{\omega,\gamma}^{\alpha,\beta}$.
This fact deduces that
\begin{align}
	\widetilde{\mathcal{R}}_{\omega,+}^{\alpha,\beta} \cup \widetilde{\mathcal{R}}_{\omega,-}^{\alpha,\beta}
		= \mathcal{R}_{\omega,+}^{\alpha,\beta} \cup \mathcal{R}_{\omega,-}^{\alpha,\beta}
		\ \text{ and }\ 
	\widetilde{\mathcal{R}}_{\omega,+}^{\alpha,\beta} \cap \widetilde{\mathcal{R}}_{\omega,-}^{\alpha,\beta}
		= \emptyset \label{130}
\end{align}
Let $(u,v) \in H_\text{rad}^1(\mathbb{R}^d) \times L_\text{rad}^2(\mathbb{R}^d)$ satisfy $L_{\omega,\gamma}(u,v) < r_{\omega,\gamma}^{\alpha,\beta}$.
When $(u,v)$ satisfies $K_{\omega,\gamma}^{\alpha,\beta}(u) \geq 0$, we have
\begin{align}
	T_{\omega,\gamma}^{\alpha,\,\beta}(u)
		= S_{\omega,\gamma}(u) - \frac{1}{\overline{\mu}}K_{\omega,\gamma}^{\alpha,\,\beta}(u)
		\leq S_{\omega,\gamma}(u)
		\leq L_{\omega,\gamma}(u,v)
		< r_{\omega,\gamma}^{\alpha,\beta}. \label{131}
\end{align}
When $(u,v)$ satisfies $K_{\omega,\gamma}^{\alpha,\beta}(u) < 0$, we have $T_{\omega,\gamma}^{\alpha,\beta}(u) > r_{\omega,\gamma}^{\alpha,\beta}$ from Lemma \ref{Estimates for the ground state}.
By \eqref{129} and \eqref{130}, it suffices to prove $\mathcal{R}_{\omega,+}^{\alpha,\beta} = \widetilde{\mathcal{R}}_{\omega,+}^{\alpha,\beta}$.\\
\eqref{131} implies that $\mathcal{R}_{\omega,+}^{\alpha,\beta} \subset \widetilde{\mathcal{R}}_{\omega,+}^{\alpha,\beta}$ holds.
We assume $\mathcal{R}_{\omega,+}^{\alpha,\beta} \subsetneq \widetilde{\mathcal{R}}_{\omega,+}^{\alpha,\beta}$ for contradiction.
Then, there exists $(u,v) \in H_\text{rad}^1(\mathbb{R}^d) \times L_\text{rad}^2(\mathbb{R}^d)$ such that $L_{\omega,\gamma}(u,v) < r_{\omega,\gamma}^{\alpha,\beta}$, $K_{\omega,\gamma}^{\alpha,\beta}(u) < 0$, and $T_{\omega,\gamma}^{\alpha,\beta}(u) < r_{\omega,\gamma}^{\alpha,\beta}$ hold.
This contradicts Lemma \ref{Estimates for the ground state}.
\end{proof}

\begin{lemma}\label{Estimate of virial}
Let $d \geq 3$, $1 < p < 1 + \frac{4}{d-2}$, $\gamma > 0$, and $1 - \omega^2 > 0$.
Let $(u_0,u_1) \in H_\text{rad}^1(\mathbb{R}^d) \times L_\text{rad}^2(\mathbb{R}^d)$ and $(\alpha,\beta)$ satisfy \eqref{101}.
We assume that $r_{\omega,\gamma}^{\alpha,\beta} > 0$.
Then, $\mathcal{R}_{\omega,+}^{\alpha,\beta}$ and $\mathcal{R}_{\omega,-}^{\alpha,\beta}$ are invariant under the flow of \eqref{NLKG}.
Moreover, if $(u_0,u_1) \in \mathcal{R}_{\omega,-}^{\alpha,\beta}$, then we have
\begin{align*}
	- \frac{1}{\overline{\mu}}K_{\omega,\gamma}^{\alpha,\beta}(u(t))
		> r_{\omega,\gamma}^{\alpha,\beta} - L_{\omega,\gamma}(u_0,u_1)
\end{align*}
for any $t \in [0,T_\text{max})$.
\end{lemma}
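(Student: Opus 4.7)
The plan is to combine three ingredients: (i) conservation of $L_{\omega,\gamma}$ along the flow, which follows from the conservation of energy and charge in Theorem \ref{Local well-posedness} together with the identity \eqref{112}; (ii) the equivalent characterization of $\mathcal{R}_{\omega,\pm}^{\alpha,\beta}$ in terms of the sign of $T_{\omega,\gamma}^{\alpha,\beta}(u) - r_{\omega,\gamma}^{\alpha,\beta}$ provided by Lemma \ref{Invariant set}; and (iii) the strict inequality $T_{\omega,\gamma}^{\alpha,\beta}(f) > r_{\omega,\gamma}^{\alpha,\beta}$ whenever $K_{\omega,\gamma}^{\alpha,\beta}(f) < 0$ from Lemma \ref{Estimates for the ground state}(1). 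Since the potential is radial, radial symmetry of $(u_0,u_1)$ is propagated by the flow, keeping us inside $H_\text{rad}^1 \times L_\text{rad}^2$; moreover, the map $t \mapsto T_{\omega,\gamma}^{\alpha,\beta}(u(t))$ is continuous because $t \mapsto u(t)$ is continuous in $H^1$ and $T_{\omega,\gamma}^{\alpha,\beta}$ is a continuous functional on $H^1$.

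First I would prove invariance by contradiction. Fix $(u_0,u_1) \in \mathcal{R}_{\omega,\pm}^{\alpha,\beta}$ and suppose the trajectory exits the corresponding set for the first time at some $t_* \in (0,T_\text{max})$. By conservation, $L_{\omega,\gamma}(u(t),\partial_t u(t)) = L_{\omega,\gamma}(u_0,u_1) < r_{\omega,\gamma}^{\alpha,\beta}$ throughout, so the only way to exit, in view of Lemma \ref{Invariant set}, is to cross the threshold $T_{\omega,\gamma}^{\alpha,\beta}(u(t_*)) = r_{\omega,\gamma}^{\alpha,\beta}$. Using $T_{\omega,\gamma}^{\alpha,\beta} = S_{\omega,\gamma} - \frac{1}{\overline{\mu}} K_{\omega,\gamma}^{\alpha,\beta}$ and $S_{\omega,\gamma}(u(t_*)) \le L_{\omega,\gamma}(u(t_*),\partial_t u(t_*))$, one computes
\begin{align*}
K_{\omega,\gamma}^{\alpha,\beta}(u(t_*)) = \overline{\mu}\bigl(S_{\omega,\gamma}(u(t_*)) - r_{\omega,\gamma}^{\alpha,\beta}\bigr) \le \overline{\mu}\bigl(L_{\omega,\gamma}(u_0,u_1) - r_{\omega,\gamma}^{\alpha,\beta}\bigr) < 0.
\end{align*}
Since $r_{\omega,\gamma}^{\alpha,\beta} > 0$ forces $u(t_*) \not\equiv 0$, Lemma \ref{Estimates for the ground state}(1) then yields $T_{\omega,\gamma}^{\alpha,\beta}(u(t_*)) > r_{\omega,\gamma}^{\alpha,\beta}$, contradicting the threshold equality. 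Hence no such $t_*$ exists, so both $\mathcal{R}_{\omega,+}^{\alpha,\beta}$ and $\mathcal{R}_{\omega,-}^{\alpha,\beta}$ are invariant.

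For the quantitative bound, if $(u_0,u_1) \in \mathcal{R}_{\omega,-}^{\alpha,\beta}$, then invariance combined with Lemma \ref{Invariant set} gives $T_{\omega,\gamma}^{\alpha,\beta}(u(t)) > r_{\omega,\gamma}^{\alpha,\beta}$ for every $t \in [0,T_\text{max})$. Using again the identity $-\frac{1}{\overline{\mu}} K_{\omega,\gamma}^{\alpha,\beta}(u(t)) = T_{\omega,\gamma}^{\alpha,\beta}(u(t)) - S_{\omega,\gamma}(u(t))$ and the pointwise bound $S_{\omega,\gamma}(u(t)) \le L_{\omega,\gamma}(u(t),\partial_t u(t)) = L_{\omega,\gamma}(u_0,u_1)$, I would conclude
\begin{align*}
-\frac{1}{\overline{\mu}} K_{\omega,\gamma}^{\alpha,\beta}(u(t)) > r_{\omega,\gamma}^{\alpha,\beta} - L_{\omega,\gamma}(u_0,u_1).
\end{align*}

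There is no genuinely hard step here: the argument is essentially a continuity-plus-contradiction scheme resting on two previously established lemmas. The only point requiring mild care is the case $u(t_*) \equiv 0$, which is automatically excluded by the assumption $r_{\omega,\gamma}^{\alpha,\beta} > 0$ since then $T_{\omega,\gamma}^{\alpha,\beta}(0) = 0 \ne r_{\omega,\gamma}^{\alpha,\beta}$, so Lemma \ref{Estimates for the ground state}(1) does apply at the hypothetical exit time.
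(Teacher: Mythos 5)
Your proposal is correct and follows essentially the same route as the paper: conservation of $L_{\omega,\gamma}$, continuity of $t\mapsto T_{\omega,\gamma}^{\alpha,\beta}(u(t))$, and the variational facts from Lemmas \ref{Rewriting infimum}--\ref{Invariant set} to rule out a first crossing, then the chain $-\frac{1}{\overline{\mu}}K_{\omega,\gamma}^{\alpha,\beta}(u(t)) = T_{\omega,\gamma}^{\alpha,\beta}(u(t)) - S_{\omega,\gamma}(u(t)) > r_{\omega,\gamma}^{\alpha,\beta} - L_{\omega,\gamma}(u_0,u_1)$. The only cosmetic difference is that the paper locates the contradiction at a first zero of $K_{\omega,\gamma}^{\alpha,\beta}$ (where $u\neq 0$ forces $S_{\omega,\gamma}\geq r_{\omega,\gamma}^{\alpha,\beta}$, against $S_{\omega,\gamma}\leq L_{\omega,\gamma}<r_{\omega,\gamma}^{\alpha,\beta}$), whereas you locate it at a first crossing $T_{\omega,\gamma}^{\alpha,\beta}=r_{\omega,\gamma}^{\alpha,\beta}$ and invoke Lemma \ref{Estimates for the ground state}(1); these are the same argument.
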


\begin{proof}
Let $(u_0,u_1) \in \mathcal{R}_{\omega,-}^{\alpha,\beta}$.
We assume that an open subinterval $I \varsubsetneq [0,T_\text{max})$ satisfies $K_{\omega,\gamma}^{\alpha,\beta}(u(t)) < 0$ for any $t \in I$ and ``$K_{\omega,\gamma}^{\alpha,\beta}(u(\inf I)) = 0$ or $K_{\omega,\gamma}^{\alpha,\beta}(u(\sup I)) = 0$''.
We consider only the case $K_{\omega,\gamma}^{\alpha,\beta}(u(\sup I)) = 0$ since the desired result holds by the same manner in the case $K_{\omega,\gamma}^{\alpha,\beta}(u(\inf I)) = 0$.
Lemma \ref{Rewriting infimum} implies
\begin{align*}
	S_{\omega,\gamma}(u(t)) - \frac{1}{\overline{\mu}}K_{\omega,\gamma}^{\alpha,\beta}(u(t))
		= T_{\omega,\gamma}^{\alpha,\beta}(u(t))
		\geq r_{\omega,\gamma}^{\alpha,\beta}
		> 0
\end{align*}
for each $t \in I$.
Taking a limit $t \longrightarrow \sup I$, we see $u(\sup I) \neq 0$.
By the definition of $r_{\omega,\gamma}^{\alpha,\beta}$, we have $S_{\omega,\gamma}(u(\sup I)) \geq r_{\omega,\gamma}^{\alpha,\beta}$.
On the other hand, it follows from \eqref{112} and $(u_0,u_1) \in \mathcal{R}_{\omega,-}^{\alpha,\beta}$ that
\begin{align*}
	S_{\omega,\gamma}(u(\sup I))
		\leq L_{\omega,\gamma}(u(\sup I),\partial_tu(\sup I))
		= L_{\omega,\gamma}(u_0,u_1)
		< r_{\omega,\gamma}^{\alpha,\beta},
\end{align*}
which is contradiction.
Therefore, we obtain
\begin{align*}
	- \frac{1}{\overline{\mu}}K_{\omega,\gamma}^{\alpha,\beta}(u(t))
		> r_{\omega,\gamma}^{\alpha,\beta} - S_{\omega,\gamma}(u(t))
		\geq r_{\omega,\gamma}^{\alpha,\beta} - L_{\omega,\gamma}(u(t),\partial_tu(t))
		= r_{\omega,\gamma}^{\alpha,\beta} - L_{\omega,\gamma}(u_0,u_1)
\end{align*}
for any $t \in [0,T_\text{max})$ by Lemma \ref{Estimates for the ground state} and \eqref{112}.\\
Let $(u_0,u_1) \in \mathcal{R}_{\omega,+}^{\alpha,\beta}$.
If we can not get the desired result, there exists $t_0 \in [0,T_\text{max})$ such that $(u(t_0),\partial_tu(t_0)) \in \mathcal{R}_{\omega,-}^{\alpha,\beta}$.
This contradicts the former result.
\end{proof}

\subsubsection{Case $1 + \frac{4}{d} < p < 1 + \frac{4}{d-2}$}

In this subsection, we get a minimizer to $r_{\omega,\gamma}^{d,2}$ for $1 + \frac{4}{d} < p < 1 + \frac{4}{d-2}$.
We note that
\begin{align*}
	K_{\omega,\gamma}^{d,2}(f)
		& = 2\|(-\Delta_\gamma)^\frac{1}{2}f\|_{L^2}^2 - \frac{(p-1)d}{p+1}\|f\|_{L^{p+1}}^{p+1}, \\
	T_{\omega,\gamma}^{d,2}(f)
		& = \frac{(1 - \omega^2)}{2}\|f\|_{L^2}^2 + \frac{(p - 1)d - 4}{2d(p-1)}\|f\|_{L^{p+1}}^{p+1}.
\end{align*}

\begin{lemma}\label{Equivalence of S and H1}
Let $d \geq 3$, $1 + \frac{4}{d} < p < 1 + \frac{4}{d-2}$, $\gamma > 0$, and $1 - \omega^2 > 0$.
Then, we have
\begin{align*}
	2\{(p-1)d - 4\}S_{\omega,\gamma}(f)
		\leq \{(p-1)d - 4\}\|f\|_{H_{\omega,\gamma}^1}^2
		\leq 2d(p-1)S_{\omega,\gamma}(f)
\end{align*}
for any $f \in H^1(\mathbb{R}^d)$ with $K_{\omega,\gamma}^{d,2}(f) \geq 0$, where
\begin{align*}
	\|f\|_{H_{\omega,\gamma}^1}^2
		:= (1 - \omega^2)\|f\|_{L^2}^2 + \|(-\Delta_\gamma)^\frac{1}{2}f\|_{L^2}^2.
\end{align*}
\end{lemma}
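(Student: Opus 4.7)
The claim is essentially an elementary computation once one writes everything in terms of $\|f\|_{H_{\omega,\gamma}^1}^2$ and $\|f\|_{L^{p+1}}^{p+1}$, and uses the condition $K_{\omega,\gamma}^{d,2}(f)\geq 0$ as a Gagliardo–Nirenberg type bound on the nonlinear term. Note from the hypothesis $p>1+\tfrac{4}{d}$ that $(p-1)d-4>0$, so both inequalities are obtained after dividing through by this positive constant.

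For the left inequality, simply unfold the definition
\[
2S_{\omega,\gamma}(f)=\|f\|_{H_{\omega,\gamma}^1}^2-\tfrac{2}{p+1}\|f\|_{L^{p+1}}^{p+1},
\]
so $2S_{\omega,\gamma}(f)\leq \|f\|_{H_{\omega,\gamma}^1}^2$ is immediate from $\|f\|_{L^{p+1}}^{p+1}\geq 0$; multiplying by $(p-1)d-4>0$ gives the left half. (No use of the constraint $K_{\omega,\gamma}^{d,2}(f)\ge 0$ is needed here.)

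For the right inequality, rewrite the target $\{(p-1)d-4\}\|f\|_{H_{\omega,\gamma}^1}^2\leq 2d(p-1)S_{\omega,\gamma}(f)$ by expanding the right-hand side; the terms proportional to $\|f\|_{H_{\omega,\gamma}^1}^2$ combine, and the claim reduces to
\[
\frac{d(p-1)}{p+1}\|f\|_{L^{p+1}}^{p+1}\leq 2\|f\|_{H_{\omega,\gamma}^1}^2.
\]
Now invoke the hypothesis $K_{\omega,\gamma}^{d,2}(f)\geq 0$, which reads
\[
\frac{d(p-1)}{p+1}\|f\|_{L^{p+1}}^{p+1}\leq 2\|(-\Delta_\gamma)^{1/2}f\|_{L^2}^2,
\]
and then bound $\|(-\Delta_\gamma)^{1/2}f\|_{L^2}^2\leq \|f\|_{H_{\omega,\gamma}^1}^2$ since $(1-\omega^2)\|f\|_{L^2}^2\geq 0$. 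This chain of two trivial estimates closes the inequality.

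There is no serious obstacle; the only subtle point to double-check is the sign $(p-1)d-4>0$, which is exactly where the mass-supercritical assumption $p>1+\tfrac{4}{d}$ is used: it guarantees that both sides of the sandwich inequality are oriented consistently, so that the two elementary estimates above (one from $\|f\|_{L^{p+1}}^{p+1}\geq 0$, the other from $K_{\omega,\gamma}^{d,2}(f)\geq 0$) really give bilateral control. In the opposite regime $p<1+\tfrac{4}{d}$ the factor $(p-1)d-4$ is negative and the inequalities would flip, which is consistent with the fact that $S_{\omega,\gamma}$ need not be coercive on the $K\geq 0$ set in that range.
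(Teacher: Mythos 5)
Your proof is correct and is essentially the paper's argument rearranged: the paper adds $2K_{\omega,\gamma}^{d,2}(f)\geq 0$ to the left-hand side and then drops a $4(1-\omega^2)\|f\|_{L^2}^2$ term, which is exactly your two elementary estimates (the $K\geq 0$ bound on $\|f\|_{L^{p+1}}^{p+1}$ and the bound $\|(-\Delta_\gamma)^{1/2}f\|_{L^2}^2\leq\|f\|_{H^1_{\omega,\gamma}}^2$) packaged into a single chain. No gap.
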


\begin{proof}
The left inequality holds immediately.
From $K_{\omega,\gamma}^{d,2}(f) \geq 0$, we have
\begin{align*}
	& \{(p-1)d - 4\}\|f\|_{H_{\omega,\gamma}^1}^2
		\leq \{(p-1)d - 4\}\|f\|_{H_{\omega,\gamma}^1}^2 + 2K_{\omega,\gamma}^{d,2}(f) \\
		& \hspace{1.0cm} = \{(p-1)d - 4\}(1 - \omega^2)\|f\|_{L^2}^2 + d(p-1)\|(-\Delta_\gamma)^\frac{1}{2}f\|_{L^2}^2 - \frac{2d(p-1)}{p+1}\|f\|_{L^{p+1}}^{p+1} \\
		& \hspace{1.0cm} = 2d(p-1)S_{\omega,\gamma}(f) - 4(1 - \omega^2)\|f\|_{L^2}^2 \\
		& \hspace{1.0cm} \leq 2d(p-1)S_{\omega,\gamma}(f).
\end{align*}
\end{proof}

\begin{lemma}
Let $d \geq 3$, $1 + \frac{4}{d} < p < 1 + \frac{4}{d-2}$, $\gamma > 0$, and $1 - \omega^2 > 0$.
Then, $r_{\omega,\gamma}^{d,2} > 0$ holds.
\end{lemma}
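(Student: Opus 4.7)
The plan is to combine the alternative characterization of the infimum from Lemma~\ref{Rewriting infimum} with the Gagliardo--Nirenberg inequality to obtain a scale-invariant lower bound on a product of norms, which in turn forces $T_{\omega,\gamma}^{d,2}$ to be bounded below by a positive constant on the relevant constraint set.

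More precisely, by Lemma~\ref{Rewriting infimum} it is enough to show
\begin{align*}
\inf\bigl\{T_{\omega,\gamma}^{d,2}(f) : f \in H_\text{rad}^1(\mathbb{R}^d)\setminus\{0\},\ K_{\omega,\gamma}^{d,2}(f) \le 0\bigr\} > 0.
\end{align*}
Under the hypothesis $1+\tfrac{4}{d}< p < 1+\tfrac{4}{d-2}$, one has $(p-1)d-4>0$, so the two coefficients appearing in $T_{\omega,\gamma}^{d,2}$ are strictly positive and hence $T_{\omega,\gamma}^{d,2}(f) \geq c_0\bigl(\|f\|_{L^2}^2 + \|f\|_{L^{p+1}}^{p+1}\bigr)$ for some $c_0>0$. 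Suppose for contradiction that there exists a sequence $\{f_n\}$ in the constraint set along which $T_{\omega,\gamma}^{d,2}(f_n)\to 0$; then $\|f_n\|_{L^2}\to 0$ and $\|f_n\|_{L^{p+1}}\to 0$, and the inequality $K_{\omega,\gamma}^{d,2}(f_n)\le 0$, i.e.\
\begin{align*}
2\|(-\Delta_\gamma)^{1/2}f_n\|_{L^2}^2 \le \frac{(p-1)d}{p+1}\|f_n\|_{L^{p+1}}^{p+1},
\end{align*}
forces $\|(-\Delta_\gamma)^{1/2}f_n\|_{L^2}\to 0$ as well. On the other hand, inserting Lemma~\ref{Gagliardo-Nirenberg inequality with the potential} into the right-hand side and dividing by $\|(-\Delta_\gamma)^{1/2}f_n\|_{L^2}^{d(p-1)/2}$ (which is strictly positive because $\gamma>0$ and $f_n\ne 0$) yields
\begin{align*}
\|(-\Delta_\gamma)^{1/2}f_n\|_{L^2}^{\frac{d(p-1)}{2}-2}\,\|f_n\|_{L^2}^{p+1-\frac{d(p-1)}{2}} \ge \frac{2(p+1)}{(p-1)d\,C_{\text{GN}}(\gamma)}>0.
\end{align*}
The assumption $p>1+\tfrac{4}{d}$ gives $\tfrac{d(p-1)}{2}-2>0$, and $p<1+\tfrac{4}{d-2}$ gives $p+1-\tfrac{d(p-1)}{2}>0$; therefore the left-hand side of the above would tend to zero, which is a contradiction.

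The only real subtlety is checking that \emph{both} exponents in the scale-invariant product are positive, which is exactly the content of the $L^2$-supercritical, $H^1$-subcritical range $1+\tfrac{4}{d}< p < 1+\tfrac{4}{d-2}$; this is precisely why a different choice of $(\alpha,\beta)$ must be used to handle the mass-subcritical case $1<p\le 1+\tfrac{4}{d}$ elsewhere in the paper. Everything else is bookkeeping and the positivity of the coefficients of $T_{\omega,\gamma}^{d,2}$.
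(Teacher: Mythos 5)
Your proof is correct and rests on exactly the same key step as the paper's: applying Lemma \ref{Gagliardo-Nirenberg inequality with the potential} to the constraint $K_{\omega,\gamma}^{d,2}(f)\le 0$ to extract the scale-invariant lower bound $\|f\|_{L^2}^{p+1-\frac{d(p-1)}{2}}\|(-\Delta_\gamma)^{\frac12}f\|_{L^2}^{\frac{d(p-1)}{2}-2}\gtrsim 1$, with both exponents positive precisely because $1+\frac{4}{d}<p<1+\frac{4}{d-2}$. The only cosmetic difference is in how the conclusion is drawn: the paper bounds this product by $\|f\|_{H_{\omega,\gamma}^1}^{p-1}\sim S_{\omega,\gamma}(f)^{\frac{p-1}{2}}$ via Lemma \ref{Equivalence of S and H1} to get a direct lower bound on $S_{\omega,\gamma}$, whereas you run a contradiction argument with a sequence on which $T_{\omega,\gamma}^{d,2}\to 0$, using Lemma \ref{Rewriting infimum}; both are valid.
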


\begin{proof}
We take any $f \in H_\text{rad}^1(\mathbb{R}^d)$ with $K_{\omega,\gamma}^{d,2}(f) = 0$.
From Lemma \ref{Gagliardo-Nirenberg inequality with the potential}, we have
\begin{align*}
	2\|(-\Delta_\gamma)^\frac{1}{2}f\|_{L^2}^2
		\leq \frac{(p-1)d}{p+1}\|f\|_{L^{p+1}}^{p+1}
		\leq \frac{(p-1)d}{p+1}C_\text{GN}(0)\|f\|_{L^2}^{p+1 - \frac{d(p-1)}{2}}\|(-\Delta_\gamma)^\frac{1}{2}f\|_{L^2}^\frac{d(p-1)}{2}.
\end{align*}
Therefore, we obtain
\begin{align*}
	1
		\lesssim \|f\|_{L^2}^{p+1 - \frac{d(p-1)}{2}}\|(-\Delta_\gamma)^\frac{1}{2}f\|_{L^2}^{\frac{d(p-1)}{2} - 2}
		\lesssim \|f\|_{H_{\omega,\gamma}^1}^{p-1}
		\sim S_{\omega,\gamma}(f)^\frac{p-1}{2},
\end{align*}
which implies the desired result.
\end{proof}

\begin{lemma}\label{Positivity of K}
Let $d \geq 3$, $1 + \frac{4}{d} < p < 1 + \frac{4}{d-2}$, $\gamma > 0$, and $1 - \omega^2 > 0$.
Assume that $\{f_n\} \subset H^1(\mathbb{R}^d) \setminus \{0\}$ is bounded sequence and satisfies $\|\nabla f_n\|_{L^2} \longrightarrow 0$ as $n \rightarrow \infty$.
Then, there exists $n_0 \in \mathbb{N}$ such that $K_{\omega,\gamma}^{d,2}(f_n) > 0$ for any $n \geq n_0$.
\end{lemma}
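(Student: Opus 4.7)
The plan is to estimate $K_{\omega,\gamma}^{d,2}(f_n)$ from below by isolating the gradient as the dominant quadratic quantity and treating the $L^{p+1}$-term as a higher-order perturbation. The key structural point is that the condition $p > 1 + \tfrac{4}{d}$ is precisely what makes $\|f_n\|_{L^{p+1}}^{p+1}$ super-quadratic in $\|\nabla f_n\|_{L^2}$, so that if $\|\nabla f_n\|_{L^2}$ is small while $\|f_n\|_{L^2}$ is bounded, the nonlinear term cannot compete with the linear one.

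Concretely, I would first note that since $\gamma>0$, the identity
\begin{align*}
	\|(-\Delta_\gamma)^{1/2} f_n\|_{L^2}^2 = \|\nabla f_n\|_{L^2}^2 + \gamma\int_{\mathbb{R}^d}\frac{|f_n(x)|^2}{|x|^2}\,dx
\end{align*}
gives the trivial lower bound $\|(-\Delta_\gamma)^{1/2} f_n\|_{L^2}^2 \geq \|\nabla f_n\|_{L^2}^2$. Next, I would apply the standard Gagliardo–Nirenberg inequality (equivalently, Lemma \ref{Gagliardo-Nirenberg inequality with the potential} combined with Hardy's inequality) to obtain
\begin{align*}
	\|f_n\|_{L^{p+1}}^{p+1} \lesssim \|f_n\|_{L^2}^{p+1-\frac{d(p-1)}{2}}\,\|\nabla f_n\|_{L^2}^{\frac{d(p-1)}{2}}.
\end{align*}
Since $\{f_n\}$ is bounded in $H^1(\mathbb{R}^d)$, the $L^2$-factor is uniformly bounded, so there exists $C>0$ with $\|f_n\|_{L^{p+1}}^{p+1} \leq C\|\nabla f_n\|_{L^2}^{d(p-1)/2}$. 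Combining these estimates,
\begin{align*}
	K_{\omega,\gamma}^{d,2}(f_n)
	\geq 2\|\nabla f_n\|_{L^2}^2 - \tfrac{(p-1)d}{p+1}C\,\|\nabla f_n\|_{L^2}^{\frac{d(p-1)}{2}}
	= \|\nabla f_n\|_{L^2}^2\Bigl(2 - C'\|\nabla f_n\|_{L^2}^{\frac{d(p-1)}{2}-2}\Bigr).
\end{align*}

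The hypothesis $p > 1+\tfrac{4}{d}$ makes the exponent $\tfrac{d(p-1)}{2}-2$ strictly positive, so the parenthesized factor converges to $2$ as $\|\nabla f_n\|_{L^2}\to 0$, and is $>0$ once $n$ is large enough. It remains to observe that $\|\nabla f_n\|_{L^2}>0$ for every $n$: indeed, $f_n\in H^1(\mathbb{R}^d)\setminus\{0\}$ and a function in $H^1(\mathbb{R}^d)$ with $\nabla f_n\equiv 0$ would be a constant, which is incompatible with $f_n\in L^2(\mathbb{R}^d)$ for $d\geq 3$ unless $f_n\equiv 0$. This yields $K_{\omega,\gamma}^{d,2}(f_n)>0$ for all $n\geq n_0$. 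There is essentially no obstacle here; the whole content of the lemma lies in the exponent comparison $\tfrac{d(p-1)}{2}>2$, which encodes the $L^2$-supercritical regime.
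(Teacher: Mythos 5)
Your proof is correct and follows essentially the same route as the paper: apply the Gagliardo--Nirenberg inequality to dominate $\|f_n\|_{L^{p+1}}^{p+1}$ by a super-quadratic power of the gradient term (exponent $\tfrac{d(p-1)}{2}>2$ precisely because $p>1+\tfrac4d$), use boundedness of $\|f_n\|_{L^2}$ to absorb the remaining factor into a constant, and conclude positivity for large $n$ since $f_n\neq 0$ forces the gradient norm to be nonzero. The only cosmetic difference is that the paper keeps $\|(-\Delta_\gamma)^{1/2}f_n\|_{L^2}$ throughout while you pass to $\|\nabla f_n\|_{L^2}$ via $\gamma>0$ and Hardy's inequality; the two are equivalent here.
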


\begin{proof}
Take a positive constant $C_0$ satisfying $\sup_{n \in \mathbb{N}}\|f_n\|_{L^2} \leq C_0$.
Using Lemma \ref{Gagliardo-Nirenberg inequality with the potential}, it follows that
\begin{align*}
	& K_{\omega,\gamma}^{d,2}(f_n)
		\geq \left\{2 - \frac{(p-1)d}{p+1}C_\text{GN}(0)\|f_n\|_{L^2}^\frac{d+2 - (d-2)p}{2}\|(-\Delta_\gamma)^\frac{1}{2}f_n\|_{L^2}^\frac{d(p-1)-4}{2}\right\}\|(-\Delta_\gamma)^\frac{1}{2}f_n\|_{L^2}^2 \\
		& \hspace{0.3cm} \geq \left\{2 - \frac{(p-1)d}{p+1}C_\text{GN}(0)C_0^\frac{d+2 - (d-2)p}{2}\|(-\Delta_\gamma)^\frac{1}{2}f_n\|_{L^2}^\frac{d(p-1)-4}{2}\right\}\|(-\Delta_\gamma)^\frac{1}{2}f_n\|_{L^2}^2
		\geq \|(-\Delta_\gamma)^\frac{1}{2}f_n\|_{L^2}^2
\end{align*}
for sufficiently large $n$.
From $f_n \neq 0$, we obtain the desired result.
\end{proof}

\begin{proposition}\label{Minimizer in inter-critical}
Let $d \geq 3$, $1 + \frac{4}{d} < p < 1 + \frac{4}{d-2}$, $\gamma > 0$, and $1 - \omega^2 > 0$.
Then, $r_{\omega,\gamma}^{d,2}$ has a minimizer.
\end{proposition}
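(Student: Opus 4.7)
The plan is to run the classical Strauss-type radial compactness argument, adapted to the mixed norm $\|(-\Delta_\gamma)^{1/2}\cdot\|_{L^2}$. Take a minimizing sequence $\{f_n\} \subset H^1_{\text{rad}}(\mathbb{R}^d) \setminus \{0\}$ with $K_{\omega,\gamma}^{d,2}(f_n) = 0$ and $S_{\omega,\gamma}(f_n) \to r_{\omega,\gamma}^{d,2}$. Because $K_{\omega,\gamma}^{d,2}(f_n)=0$ gives $S_{\omega,\gamma}(f_n) = T_{\omega,\gamma}^{d,2}(f_n)$, Lemma \ref{Equivalence of S and H1} yields $\|f_n\|^2_{H^1_{\omega,\gamma}} \lesssim S_{\omega,\gamma}(f_n)$, and the equivalence of $\|(-\Delta_\gamma)^{1/2}\cdot\|_{L^2}$ with $\|\nabla\cdot\|_{L^2}$ (Lemma \ref{Equivalence of Sobolev norm} together with the Hardy inequality) bounds $\{f_n\}$ in $H^1(\mathbb{R}^d)$. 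Passing to a subsequence, $f_n \rightharpoonup f_\infty$ weakly in $H^1(\mathbb{R}^d)$; since $2 < p+1 < 2d/(d-2)$, Strauss's compact embedding $H^1_{\text{rad}}(\mathbb{R}^d) \hookrightarrow L^{p+1}(\mathbb{R}^d)$ upgrades this to strong convergence $f_n \to f_\infty$ in $L^{p+1}(\mathbb{R}^d)$.

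The first decisive step is to rule out $f_\infty = 0$. If vanishing held, then $\|f_n\|_{L^{p+1}} \to 0$, and the constraint would force
\[
\|(-\Delta_\gamma)^{1/2} f_n\|_{L^2}^2 = \frac{(p-1)d}{2(p+1)} \|f_n\|_{L^{p+1}}^{p+1} \longrightarrow 0,
\]
so $\|\nabla f_n\|_{L^2} \to 0$ by Lemma \ref{Equivalence of Sobolev norm}. Since $\{f_n\}$ is bounded in $H^1$, Lemma \ref{Positivity of K} then gives $K^{d,2}_{\omega,\gamma}(f_n) > 0$ for all sufficiently large $n$, contradicting $K^{d,2}_{\omega,\gamma}(f_n) = 0$. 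Hence $f_\infty \neq 0$.

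To identify $f_\infty$ as a minimizer, combine weak lower semicontinuity of $\|\cdot\|_{L^2}$ and of $\|(-\Delta_\gamma)^{1/2}\cdot\|_{L^2}$ — the latter being legitimate because $(\dot{H}^1, \langle\cdot,\cdot\rangle_{\dot{H}^1_\gamma})$ is a Hilbert space — with strong $L^{p+1}$ convergence. This yields $K^{d,2}_{\omega,\gamma}(f_\infty) \leq \liminf K^{d,2}_{\omega,\gamma}(f_n) = 0$ and $T_{\omega,\gamma}^{d,2}(f_\infty) \leq \liminf T_{\omega,\gamma}^{d,2}(f_n) = r_{\omega,\gamma}^{d,2}$. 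If $K^{d,2}_{\omega,\gamma}(f_\infty) < 0$, pick $\lambda \in (0,1)$ with $K^{d,2}_{\omega,\gamma}(\lambda f_\infty) = 0$ (continuity of $\lambda \mapsto K^{d,2}_{\omega,\gamma}(\lambda f_\infty)$ from positive values near $0$ to negative at $1$); then Lemma \ref{Rewriting infimum} gives
\[
r_{\omega,\gamma}^{d,2} \leq S_{\omega,\gamma}(\lambda f_\infty) = T_{\omega,\gamma}^{d,2}(\lambda f_\infty) < T_{\omega,\gamma}^{d,2}(f_\infty) \leq r_{\omega,\gamma}^{d,2},
\]
a contradiction; the strict inequality uses that both coefficients in the $(\alpha,\beta)=(d,2)$ formula for $T_{\omega,\gamma}^{d,2}$ are strictly positive, which is precisely where the hypothesis $p > 1 + 4/d$ is needed. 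Thus $K^{d,2}_{\omega,\gamma}(f_\infty) = 0$, and $S_{\omega,\gamma}(f_\infty) = T^{d,2}_{\omega,\gamma}(f_\infty) \leq r^{d,2}_{\omega,\gamma}$ gives equality by the definition of $r_{\omega,\gamma}^{d,2}$. The main obstacle is securing non-triviality of the weak limit: the scaling-critical nature of the inverse-square potential obstructs a straightforward concentration-compactness argument, but the radial restriction and Lemma \ref{Positivity of K} together eliminate the vanishing scenario cleanly.
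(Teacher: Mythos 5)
Your proof is correct and follows essentially the same route as the paper: bounded minimizing sequence via Lemma \ref{Equivalence of S and H1}, weak $H^1_\gamma$ limit plus compact radial embedding into $L^{p+1}$, elimination of the vanishing scenario through Lemma \ref{Positivity of K}, and a rescaling argument to force $K^{d,2}_{\omega,\gamma}(f_\infty)=0$. The only cosmetic difference is that you handle the final step as a dichotomy ($K<0$ ruled out directly by a strict inequality, hence $K=0$), while the paper first invokes Lemma \ref{Rewriting infimum} to fix $T^{d,2}_{\omega,\gamma}(\phi_\infty)=r^{d,2}_{\omega,\gamma}$ and then shows the scaling parameter must equal $1$; both are equivalent.
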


\begin{proof}
We take a minimizing sequence $\{\phi_n\} \subset H_\text{rad}^1(\mathbb{R}^d) \setminus \{0\}$, that is, $\phi_n$ satisfies
\begin{align*}
	S_{\omega,\gamma}(\phi_n)
		= T_{\omega,\gamma}^{d,2}(\phi_n)
		\searrow r_{\omega,\gamma}^{d,2}\ \text{ as }\ n \rightarrow \infty \ \ \text{ and }\ \ 
	K_{\omega,\gamma}^{d,2}(\phi_n)
		= 0\ \text{ for any }\ n \in \mathbb{N}.
\end{align*}
From Lemma \ref{Equivalence of S and H1}, $\{\phi_n\}$ is a bounded sequence in $H_\gamma^1(\mathbb{R}^d)$.
Since $\dot{H}_\gamma^1(\mathbb{R}^d)$ is a Hilbert space and the embedding $H_\text{rad}^1(\mathbb{R}^d) \subset L^{p+1}(\mathbb{R}^d)$ is compact, there exists $\phi_\infty \in H_\text{rad}^1(\mathbb{R}^d)$ such that
\begin{align*}
	\phi_n
		\xrightharpoonup[]{\hspace{0.4cm}}\phi_\infty\ \text{ in }\ H_\gamma^1(\mathbb{R}^d)
	\ \ \text{ and }\ \ 
	\phi_n
		\longrightarrow \phi_\infty\ \text{ in }\ L^{p+1}(\mathbb{R}^d).
\end{align*}
Then, we have
\begin{align}
	\|\phi_\infty\|_{L^2}
		& \leq \liminf_{n \rightarrow \infty}\|\phi_n\|_{L^2}, \notag \\
	\|(-\Delta_\gamma)^\frac{1}{2}\phi_\infty\|_{L^2}
		& \leq \liminf_{n \rightarrow \infty}\|(-\Delta_\gamma)^\frac{1}{2}\phi_n\|_{L^2}, \label{142} \\
	\|\phi_\infty\|_{L^{p+1}}
		& = \lim_{n \rightarrow \infty}\|\phi_n\|_{L^{p+1}}. \notag
\end{align}
These relations deduce that
\begin{gather}
	S_{\omega,\gamma}(\phi_\infty)
		\leq \liminf_{n \rightarrow \infty}S_{\omega,\gamma}(\phi_n)
		= r_{\omega,\gamma}^{d,2}, \notag \\
	T_{\omega,\gamma}^{d,2}(\phi_\infty)
		\leq \liminf_{n \rightarrow \infty}T_{\omega,\gamma}^{d,2}(\phi_n)
		= r_{\omega,\gamma}^{d,2}, \label{132} \\
	K_{\omega,\gamma}^{d,2}(\phi_\infty)
		\leq \liminf_{n \rightarrow \infty}K_{\omega,\gamma}^{d,2}(\phi_n)
		= 0. \notag
\end{gather}
We prove $\phi_\infty \neq 0$.
If $\phi_\infty = 0$, then
\begin{align*}
	0
		= \frac{(p-1)d}{p+1}\lim_{n \rightarrow \infty}\|\phi_n\|_{L^{p+1}}^{p+1}
		= 2\lim_{n \rightarrow \infty}\|(-\Delta_\gamma)^\frac{1}{2}\phi_n\|_{L^2}^2
		\geq 0
\end{align*}
from $K_{\omega,\gamma}^{d,2}(\phi_n) = 0$.
Lemma \ref{Positivity of K} deduces contradiction and hence, $\phi_\infty \neq 0$ holds.
It follows from Lemma \ref{Rewriting infimum} and \eqref{132} that $T_{\omega,\gamma}^{d,2}(\phi_\infty) = r_{\omega,\gamma}^{\alpha,\beta}$.
Taking $\lambda \in (0,1]$ satisfying $K_{\omega,\gamma}^{\alpha,\beta}(\lambda \phi_\infty) = 0$, we obtain
\begin{align*}
	r_{\omega,\gamma}^{d,2}
		\leq S_{\omega,\gamma}(\lambda \phi_\infty)
		= T_{\omega,\gamma}^{d,2}(\lambda \phi_\infty)
		\leq T_{\omega,\gamma}^{d,2}(\phi_\infty)
		= r_{\omega,\gamma}^{d,2}.
\end{align*}
Therefore, $\lambda$ must be $1$ and $\phi_\infty$ attains $r_{\omega,\gamma}^{d,2}$.
\end{proof}

\subsubsection{Case $p = 1 + \frac{4}{d}$}

In this subsection, we get a minimizer to $r_{\omega,\gamma}^{d,2}$ for $p = 1 + \frac{4}{d}$.
We note that
\begin{gather*}
	K_{\omega,\gamma}^{d,2}(f)
		= 2\|(-\Delta_\gamma)^\frac{1}{2}f\|_{L^2}^2 - \frac{2d}{d+2}\|f\|_{L^\frac{2(d+2)}{d}}^\frac{2(d+2)}{d} \ \ \text{ and }\ \ 
	T_{\omega,\gamma}^{d,2}(f)
		= \frac{1 - \omega^2}{2}\|f\|_{L^2}^2.
\end{gather*}

\begin{lemma}\label{Lower bound of r}
Let $d \geq 3$, $p = 1 + \frac{4}{d}$, $\gamma > 0$, and $1 - \omega^2 > 0$.
Then, we have
\begin{align*}
	r_{\omega,\gamma}^{d,2}
		\geq \frac{1 - \omega^2}{2}\left\{\frac{d+2}{dC_\text{GN}(0)}\right\}^\frac{d}{2}
		> 0,
\end{align*}
where $C_\text{GN}(0)$ is defined in Lemma \ref{Gagliardo-Nirenberg inequality with the potential}.
\end{lemma}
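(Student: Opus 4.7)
The plan is to exploit the fact that at the $L^2$-critical exponent $p = 1 + \frac{4}{d}$ the functional $T_{\omega,\gamma}^{d,2}$ collapses to a pure multiple of $\|f\|_{L^2}^2$, and then to use the Gagliardo--Nirenberg inequality \eqref{G-N inequality} together with the constraint $K_{\omega,\gamma}^{d,2}(f)=0$ to bound $\|f\|_{L^2}^2$ from below by a fixed positive constant.

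Concretely, I would first check the algebraic reduction: at $p = 1+\frac{4}{d}$ one has $(p-1)d - 4 = 0$, which is precisely the exponent appearing in front of $\|f\|_{L^{p+1}}^{p+1}$ in the formula for $T_{\omega,\gamma}^{\alpha,\beta}$ with $(\alpha,\beta)=(d,2)$; the coefficient in front of $\|f\|_{L^2}^2$ simplifies to $\frac{1-\omega^2}{2}$. Hence for every competitor $f$ of the infimum $r_{\omega,\gamma}^{d,2}$ the value $T_{\omega,\gamma}^{d,2}(f) = \frac{1-\omega^2}{2}\|f\|_{L^2}^2$, and by Lemma \ref{Rewriting infimum} it is enough to control $\|f\|_{L^2}^2$ from below on the set where $K_{\omega,\gamma}^{d,2}(f)\le 0$.

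Next, for any $f\in H^1_{\mathrm{rad}}(\mathbb{R}^d)\setminus\{0\}$ with $K_{\omega,\gamma}^{d,2}(f)=0$, I would write
\begin{align*}
2\|(-\Delta_\gamma)^{\frac12}f\|_{L^2}^{2}
= \frac{2d}{d+2}\|f\|_{L^{2(d+2)/d}}^{2(d+2)/d}
\end{align*}
and insert \eqref{G-N inequality}, noting that at the critical exponent the power of $\|(-\Delta_\gamma)^{\frac12}f\|_{L^2}$ on the right-hand side equals $2$. Cancelling that factor (which is nonzero since $f\ne 0$, otherwise $K=0$ would force $\|f\|_{L^{p+1}}=0$, hence $f=0$) yields
\begin{align*}
\|f\|_{L^2}^{4/d}\ \geq\ \frac{d+2}{d\,C_{\mathrm{GN}}(0)},
\qquad\text{i.e.}\qquad
\|f\|_{L^2}^{2}\ \geq\ \left\{\frac{d+2}{d\,C_{\mathrm{GN}}(0)}\right\}^{d/2}.
\end{align*}
Taking the infimum over all such $f$ then gives the claimed bound on $r_{\omega,\gamma}^{d,2}$.

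There is essentially no genuine obstacle here; the proof is a one-line Gagliardo--Nirenberg computation once the correct reformulation of $T_{\omega,\gamma}^{d,2}$ has been noted. The only point that deserves attention is the justification that the inequality \eqref{G-N inequality} is sharp with the same constant $C_{\mathrm{GN}}(\gamma)=C_{\mathrm{GN}}(0)$ as in the potential-free case, which is exactly the content of Lemma \ref{Gagliardo-Nirenberg inequality with the potential}; I would simply cite it to avoid reproving anything about the best constant.
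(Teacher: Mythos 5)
Your proposal is correct and follows essentially the same route as the paper: reduce $T_{\omega,\gamma}^{d,2}$ to $\tfrac{1-\omega^2}{2}\|f\|_{L^2}^2$ at the mass-critical exponent, then combine the constraint on $K_{\omega,\gamma}^{d,2}$ with \eqref{G-N inequality} (where the critical scaling makes the $\|(-\Delta_\gamma)^{\frac12}f\|_{L^2}$ powers match and cancel) to get the lower bound on $\|f\|_{L^2}^{4/d}$. The only cosmetic difference is that the paper runs the argument on the set $K_{\omega,\gamma}^{d,2}(f)\leq 0$ via Lemma \ref{Rewriting infimum}, whereas you restrict to $K_{\omega,\gamma}^{d,2}(f)=0$, which already suffices by the definition \eqref{146} and changes nothing in the computation.
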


\begin{proof}
From $K_{\omega,\gamma}^{d,2}(f) \leq 0$ and Lemma \ref{Gagliardo-Nirenberg inequality with the potential}, it follows that
\begin{align*}
	2\|(-\Delta_\gamma)^\frac{1}{2}f\|_{L^2}^2
		& \leq \frac{2d}{d+2}C_\text{GN}(0)\|f\|_{L^2}^\frac{4}{d}\|(-\Delta_\gamma)^\frac{1}{2}f\|_{L^2}^2.
\end{align*}
Therefore, we obtain
\begin{align*}
	\frac{1 - \omega^2}{2}\left\{\frac{d+2}{dC_\text{GN}(0)}\right\}^\frac{d}{2}
		\leq \frac{1 - \omega^2}{2}\|f\|_{L^2}^2
		= T_{\omega,\gamma}^{\alpha,\beta}(f).
\end{align*}
Taking infimum over $f \in H_\text{rad}^1(\mathbb{R}^d) \setminus \{0\}$ satisfying $K_{\omega,\gamma}^{d,2}(f) \leq 0$, we get the desired result from Lemma \ref{Rewriting infimum}.
\end{proof}

\begin{proposition}\label{Existence of minimizer for mass-critical}
Let $d \geq 3$, $p = 1 + \frac{4}{d}$, $\gamma > 0$, and $1 - \omega^2 > 0$.
Then, $r_{\omega,\gamma}^{d,2}$ has a minimizer.
\end{proposition}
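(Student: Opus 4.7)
The plan is to follow the same compactness strategy as Proposition \ref{Minimizer in inter-critical}, but with one additional rescaling step forced by scale invariance. In the mass-critical case $p = 1 + 4/d$, the reduced functional $T_{\omega,\gamma}^{d,2}(f) = \frac{1-\omega^2}{2}\|f\|_{L^2}^2$ controls only the $L^2$ norm, while the constraint $K_{\omega,\gamma}^{d,2}(f) = 0$ (together with $T_{\omega,\gamma}^{d,2}$) is invariant under the $L^2$-preserving dilation $f \mapsto \lambda^{d/2}f(\lambda\,\cdot\,)$, because $\|(-\Delta_\gamma)^\frac{1}{2}[\lambda^{d/2}f(\lambda\,\cdot\,)]\|_{L^2}^2 = \lambda^2\|(-\Delta_\gamma)^\frac{1}{2}f\|_{L^2}^2$ and $\|\lambda^{d/2}f(\lambda\,\cdot\,)\|_{L^{2(d+2)/d}}^{2(d+2)/d} = \lambda^2\|f\|_{L^{2(d+2)/d}}^{2(d+2)/d}$. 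Hence a minimizing sequence need not be bounded in $\dot{H}_\gamma^1(\mathbb{R}^d)$, and one must exploit this freedom to normalize the sequence before taking a weak limit.

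The concrete steps are as follows. First, pick a minimizing sequence $\{\phi_n\} \subset H_\text{rad}^1(\mathbb{R}^d) \setminus \{0\}$ with $K_{\omega,\gamma}^{d,2}(\phi_n) = 0$ and $T_{\omega,\gamma}^{d,2}(\phi_n) \searrow r_{\omega,\gamma}^{d,2}$. The $L^2$ norms $\|\phi_n\|_{L^2}$ are automatically bounded above (by $T_{\omega,\gamma}^{d,2}$) and below (by the computation in Lemma \ref{Lower bound of r} applied with $K = 0$ and Lemma \ref{Gagliardo-Nirenberg inequality with the potential}), and in particular $\|(-\Delta_\gamma)^\frac{1}{2}\phi_n\|_{L^2} > 0$. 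Set $\lambda_n := \|(-\Delta_\gamma)^\frac{1}{2}\phi_n\|_{L^2}^{-1}$ and define $\tilde{\phi}_n(x) := \lambda_n^{d/2}\phi_n(\lambda_n x)$. By the scaling computation above, $\|\tilde{\phi}_n\|_{L^2} = \|\phi_n\|_{L^2}$, $\|(-\Delta_\gamma)^\frac{1}{2}\tilde{\phi}_n\|_{L^2} = 1$, $K_{\omega,\gamma}^{d,2}(\tilde{\phi}_n) = 0$, and $T_{\omega,\gamma}^{d,2}(\tilde{\phi}_n) = T_{\omega,\gamma}^{d,2}(\phi_n) \searrow r_{\omega,\gamma}^{d,2}$, so that $\{\tilde{\phi}_n\}$ is bounded in $H_\gamma^1(\mathbb{R}^d)$.

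Next, extract a subsequence with $\tilde{\phi}_n \rightharpoonup \phi_\infty$ weakly in $H_\gamma^1$ and, via the compact radial embedding $H_\text{rad}^1(\mathbb{R}^d) \hookrightarrow L^{2(d+2)/d}(\mathbb{R}^d)$ (which holds because $2 < 2(d+2)/d < 2d/(d-2)$), strongly in $L^{2(d+2)/d}$. Using $K_{\omega,\gamma}^{d,2}(\tilde{\phi}_n) = 0$, one reads off $\|\tilde{\phi}_n\|_{L^{2(d+2)/d}}^{2(d+2)/d} = (d+2)/d$, so the strong $L^{2(d+2)/d}$ convergence gives $\|\phi_\infty\|_{L^{2(d+2)/d}}^{2(d+2)/d} = (d+2)/d$, hence $\phi_\infty \neq 0$. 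Weak lower semicontinuity of $\|\,\cdot\,\|_{L^2}$ and of $\|(-\Delta_\gamma)^\frac{1}{2}\,\cdot\,\|_{L^2}$ (the latter because $\dot{H}_\gamma^1$ is a Hilbert space) then yields $T_{\omega,\gamma}^{d,2}(\phi_\infty) \leq r_{\omega,\gamma}^{d,2}$ and $K_{\omega,\gamma}^{d,2}(\phi_\infty) \leq 0$.

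Finally, Lemma \ref{Rewriting infimum} forces $T_{\omega,\gamma}^{d,2}(\phi_\infty) \geq r_{\omega,\gamma}^{d,2}$, so $T_{\omega,\gamma}^{d,2}(\phi_\infty) = r_{\omega,\gamma}^{d,2}$. If $K_{\omega,\gamma}^{d,2}(\phi_\infty) < 0$, choose $\mu \in (0,1)$ with $K_{\omega,\gamma}^{d,2}(\mu \phi_\infty) = 0$; since $T_{\omega,\gamma}^{d,2}$ is homogeneous of degree $2$ in $\|f\|_{L^2}$, we get $r_{\omega,\gamma}^{d,2} \leq T_{\omega,\gamma}^{d,2}(\mu\phi_\infty) = \mu^2 T_{\omega,\gamma}^{d,2}(\phi_\infty) = \mu^2 r_{\omega,\gamma}^{d,2} < r_{\omega,\gamma}^{d,2}$, a contradiction. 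Hence $K_{\omega,\gamma}^{d,2}(\phi_\infty) = 0$ and $\phi_\infty$ attains $r_{\omega,\gamma}^{d,2}$. The main obstacle is precisely the rescaling step: unlike Proposition \ref{Minimizer in inter-critical}, where Lemma \ref{Equivalence of S and H1} gives $H_\gamma^1$-boundedness for free, here the scale invariance of the constraint requires an explicit $L^2$-preserving renormalization to prevent the minimizing sequence from collapsing or spreading before we invoke compactness.
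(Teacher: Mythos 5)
Your proof is correct and follows essentially the same route as the paper: a minimizing sequence, an $L^2$-preserving dilation to restore $H_\gamma^1$-boundedness (the paper normalizes $\|\psi_n\|_{L^{2(d+2)/d}}=1$ via $\lambda=\|\phi_n\|_{L^{2(d+2)/d}}^{-(d+2)/d}$, you normalize $\|(-\Delta_\gamma)^{\frac12}\tilde\phi_n\|_{L^2}=1$, which is equivalent under the constraint $K_{\omega,\gamma}^{d,2}=0$), the compact radial embedding to rule out vanishing, weak lower semicontinuity, and the final scaling argument forcing $K_{\omega,\gamma}^{d,2}(\phi_\infty)=0$. No gaps; your strict-inequality step $\mu^2 r_{\omega,\gamma}^{d,2}<r_{\omega,\gamma}^{d,2}$ uses $r_{\omega,\gamma}^{d,2}>0$, which is exactly Lemma \ref{Lower bound of r}.
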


\begin{proof}
We take a minimizing sequence $\{\phi_n\} \subset H_\text{rad}^1(\mathbb{R}^d) \setminus \{0\}$, that is, $\phi_n$ satisfies
\begin{align*}
	\frac{1 - \omega^2}{2}\|\phi_n\|_{L^2}^2
		= S_{\omega,\gamma}(\phi_n)
		\searrow r_{\omega,\gamma}^{d,2}\ \text{ as }\ n \rightarrow \infty \ \ \text{ and }\ \ 
	K_{\omega,\gamma}^{d,2}(\phi_n)
		= 0\ \text{ for any }n \in \mathbb{N}.
\end{align*}
Replacing $\phi_n$ with $\psi_n := \lambda^\frac{d}{2}\phi_n(\lambda\,\cdot\,)$ ($\lambda = \|\phi_n\|_{L^\frac{2(d+2)}{d}}^{-\frac{d+2}{d}}$), we have
\begin{align*}
	\frac{1 - \omega^2}{2}\|\psi_n\|_{L^2}^2
		\searrow r_{\omega,\gamma}^{d,2},\ \ \ 
	\|\psi_n\|_{L^\frac{2(d+2)}{d}}
		= 1,\ \ \text{ and }\ \ 
	K_{\omega,\gamma}^{d,2}(\psi_n)
		= 0.
\end{align*}
Then, $\{\psi_n\} \subset H_{\gamma,\text{rad}}^1(\mathbb{R}^d)$ is a bounded sequence.
Since $\dot{H}_\gamma^1(\mathbb{R}^d)$ is a Hilbert space and the embedding $H_\text{rad}^1(\mathbb{R}^d) \subset L^\frac{2(d+2)}{d}(\mathbb{R}^d)$ is compact, there exists a subsequence of $\{\psi_n\}$ (, which is denoted by the same symbol) and $\psi_\infty \in H_{\gamma,\text{rad}}^1(\mathbb{R}^d)$ such that
\begin{align*}
	\psi_n
		\xrightharpoonup[]{\hspace{0.4cm}}\psi_\infty\ \text{ in }\ H_\gamma^1(\mathbb{R}^d)
	\ \ \text{ and }\ \ 
	\psi_n
		\longrightarrow \psi_\infty\ \text{ in }\ L^\frac{2(d+2)}{d}(\mathbb{R}^d).
\end{align*}
Then, we have
\begin{gather*}
	\|\psi_\infty\|_{L^2}
		\leq \liminf_{n \rightarrow \infty}\|\psi_n\|_{L^2}, \\
	\|(-\Delta_\gamma)^\frac{1}{2}\psi_\infty\|_{L^2}
		\leq \liminf_{n \rightarrow \infty}\|(-\Delta_\gamma)^\frac{1}{2}\psi_n\|_{L^2}, \\
	\|\psi_\infty\|_{L^\frac{2(d+2)}{d}}
		= \lim_{n \rightarrow \infty}\|\psi_n\|_{L^\frac{2(d+2)}{d}}
		= 1.
\end{gather*}
These relations deduce that $\psi_\infty \in H_\text{rad}^1(\mathbb{R}^d) \setminus \{0\}$,
\begin{gather}
	S_{\omega,\gamma}(\psi_\infty)
		\leq \liminf_{n \rightarrow \infty}S_{\omega,\gamma}(\psi_n)
		= r_{\omega,\gamma}^{d,2}, \notag \\
	T_{\omega,\gamma}^{d,2}(\psi_\infty)
		\leq \liminf_{n \rightarrow \infty}T_{\omega,\gamma}^{d,2}(\psi_n)
		= r_{\omega,\gamma}^{d,2}, \label{133} \\
	K_{\omega,\gamma}^{d,2}(\psi_\infty)
		\leq \liminf_{n \rightarrow \infty}K_{\omega,\gamma}^{d,2}(\psi_n)
		= 0. \notag
\end{gather}
It follows from Lemma \ref{Rewriting infimum} and \eqref{133} that $T_{\omega,\gamma}^{d,2}(\psi_\infty) = r_{\omega,\gamma}^{d,2}$.
Taking $\lambda \in (0,1]$ satisfying $K_{\omega,\gamma}^{d,2}(\lambda \psi_\infty) = 0$, we obtain
\begin{align*}
	r_{\omega,\gamma}^{d,2}
		\leq S_{\omega,\gamma}(\lambda \psi_\infty)
		= T_{\omega,\gamma}^{d,2}(\lambda \psi_\infty)
		\leq T_{\omega,\gamma}^{d,2}(\psi_\infty)
		= r_{\omega,\gamma}^{d,2}.
\end{align*}
Thus, $\lambda$ must be $1$ and $\psi_\infty$ attains $r_{\omega,\gamma}^{d,2}$.
\end{proof}

\subsubsection{Case $1 < p < 1 + \frac{4}{d}$}

In this subsection, we get a minimizer to $r_{\omega,\gamma}^{2,p-1}$ and $r_{\omega,\gamma}^{0,-1}$ respectively for $1 < p < 1 + \frac{4}{d}$.
We note that
\begin{gather*}
	K_{\omega,\gamma}^{2,p-1}(f)
		= \frac{q(p-1)(1 - \omega^2)}{2}\|f\|_{L^2}^2 + \frac{(q+2)(p-1)}{2}\|(-\Delta_\gamma)^\frac{1}{2}f\|_{L^2}^2 - \frac{(q+2)(p-1)}{p+1}\|f\|_{L^{p+1}}^{p+1}, \\
	T_{\omega,\gamma}^{2,p-1}(f)
		= \frac{1 - \omega^2}{q+2}\|f\|_{L^2}^2,
\end{gather*}
where $q := \frac{4}{p-1} - d > 0$ and
\begin{gather*}
	K_{\omega,\gamma}^{0,-1}(f)
		= \frac{d(1 - \omega^2)}{2}\|f\|_{L^2}^2 + \frac{d-2}{2}\|(-\Delta_\gamma)^\frac{1}{2}f\|_{L^2}^2 - \frac{d}{p+1}\|f\|_{L^{p+1}}^{p+1}, \\
	T_{\omega,\gamma}^{0,-1}(f)
		= \frac{1}{d}\|(-\Delta_\gamma)^\frac{1}{2}f\|_{L^2}^2.
\end{gather*}

First, we get a minimizer to $r_{\omega,\gamma}^{2,p-1}$.

\begin{proposition}\label{Characterization of the ground state in mass-subcritical 1}
Let $d \geq 3$, $1 < p < 1 + \frac{4}{d}$, $\gamma > 0$, and $1 - \omega^2 > 0$.
Then, $r_{\omega,\gamma}^{2,p-1}$ has a minimizer.
\end{proposition}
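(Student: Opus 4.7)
The plan is to mimic Propositions \ref{Minimizer in inter-critical} and \ref{Existence of minimizer for mass-critical}. The key novelty of the mass-subcritical regime is that the functional $T^{2,p-1}_{\omega,\gamma}(f) = \frac{1-\omega^2}{q+2}\|f\|_{L^2}^2$ controls only the $L^2$-norm, so the $H^1_\gamma$-bound on a minimizing sequence must be obtained by bootstrapping through the Pohozaev constraint $K^{2,p-1}_{\omega,\gamma}(f)=0$ and the Gagliardo--Nirenberg inequality (Lemma \ref{Gagliardo-Nirenberg inequality with the potential}).

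First I would establish $r^{2,p-1}_{\omega,\gamma}>0$. For any $f\in H^1_\text{rad}(\mathbb{R}^d)\setminus\{0\}$ with $K^{2,p-1}_{\omega,\gamma}(f)\leq 0$, dropping the $L^2$-term and applying Lemma \ref{Gagliardo-Nirenberg inequality with the potential} to the $L^{p+1}$-term yields, since $\frac{d(p-1)}{2}<2$, a bound $\|(-\Delta_\gamma)^{\frac{1}{2}}f\|_{L^2} \lesssim \|f\|_{L^2}^{\alpha_0}$ with $\alpha_0 := \frac{p+1-d(p-1)/2}{2-d(p-1)/2}>1$. Substituting this into the inequality $\|f\|_{L^2}^2\lesssim \|f\|_{L^{p+1}}^{p+1}\lesssim \|f\|_{L^2}^{p+1-d(p-1)/2}\|(-\Delta_\gamma)^{\frac{1}{2}}f\|_{L^2}^{d(p-1)/2}$ that follows by dropping the $\dot H^1_\gamma$-term of $K^{2,p-1}_{\omega,\gamma}(f)\leq 0$ then gives $\|f\|_{L^2}\gtrsim 1$, and hence $T^{2,p-1}_{\omega,\gamma}(f)\gtrsim 1$ and $r^{2,p-1}_{\omega,\gamma}>0$.

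Next, take a minimizing sequence $\{\phi_n\}\subset H^1_\text{rad}(\mathbb{R}^d)\setminus\{0\}$ with $K^{2,p-1}_{\omega,\gamma}(\phi_n)=0$ and $T^{2,p-1}_{\omega,\gamma}(\phi_n)\searrow r^{2,p-1}_{\omega,\gamma}$. The $L^2$-bound is immediate from the minimizing condition, and the bootstrap above produces a uniform bound on $\|(-\Delta_\gamma)^{\frac{1}{2}}\phi_n\|_{L^2}$, so $\{\phi_n\}$ is bounded in $H^1_\gamma(\mathbb{R}^d)$. Since $\dot H^1_\gamma$ is a Hilbert space and the embedding $H^1_\text{rad}\hookrightarrow L^{p+1}$ is compact, a subsequence satisfies $\phi_n\rightharpoonup\phi_\infty$ weakly in $H^1_\gamma$ and $\phi_n\to\phi_\infty$ strongly in $L^{p+1}$. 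Lower semi-continuity of the norms gives $S_{\omega,\gamma}(\phi_\infty)\leq r^{2,p-1}_{\omega,\gamma}$, $T^{2,p-1}_{\omega,\gamma}(\phi_\infty)\leq r^{2,p-1}_{\omega,\gamma}$ and $K^{2,p-1}_{\omega,\gamma}(\phi_\infty)\leq 0$. Non-triviality of $\phi_\infty$ follows from $K^{2,p-1}_{\omega,\gamma}(\phi_n)=0$: dropping the $\dot H^1_\gamma$-term one obtains $\|\phi_n\|_{L^{p+1}}^{p+1}\geq \frac{q(p+1)(1-\omega^2)}{2(q+2)}\|\phi_n\|_{L^2}^2\gtrsim 1$ by the positivity of $r^{2,p-1}_{\omega,\gamma}$ just proved, so $\|\phi_\infty\|_{L^{p+1}}=\lim\|\phi_n\|_{L^{p+1}}>0$.

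Finally, as in the previous propositions, Lemma \ref{Rewriting infimum} combined with $T^{2,p-1}_{\omega,\gamma}(\phi_\infty)\leq r^{2,p-1}_{\omega,\gamma}$ forces $T^{2,p-1}_{\omega,\gamma}(\phi_\infty)=r^{2,p-1}_{\omega,\gamma}$; choosing $\lambda\in(0,1]$ with $K^{2,p-1}_{\omega,\gamma}(\lambda\phi_\infty)=0$ (which exists because $K^{2,p-1}_{\omega,\gamma}(\phi_\infty)\leq 0$), the chain $r^{2,p-1}_{\omega,\gamma}\leq S_{\omega,\gamma}(\lambda\phi_\infty)=T^{2,p-1}_{\omega,\gamma}(\lambda\phi_\infty)=\lambda^2 T^{2,p-1}_{\omega,\gamma}(\phi_\infty)\leq r^{2,p-1}_{\omega,\gamma}$ forces $\lambda=1$, so $\phi_\infty$ attains the minimum. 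The main obstacle, relative to Propositions \ref{Minimizer in inter-critical} and \ref{Existence of minimizer for mass-critical}, is this preliminary Gagliardo--Nirenberg bootstrap that promotes the automatic $L^2$-bound coming from $T^{2,p-1}_{\omega,\gamma}$ to a full $H^1_\gamma$-bound and simultaneously delivers $r^{2,p-1}_{\omega,\gamma}>0$; once this step is in place, everything else is a routine adaptation.
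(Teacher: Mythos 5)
Your proof is correct and follows essentially the same route as the paper: a minimizing sequence on the constraint $K_{\omega,\gamma}^{2,p-1}=0$, the $\dot H_\gamma^1$-bound bootstrapped from the constraint via the Gagliardo--Nirenberg inequality, the compact radial embedding with weak lower semi-continuity, and the $\lambda$-rescaling to conclude. The only (harmless) reorganization is that you establish $r_{\omega,\gamma}^{2,p-1}>0$ up front and deduce nontriviality of the weak limit from it, whereas the paper obtains the same conclusion by running the identical Gagliardo--Nirenberg computation inside a contradiction argument assuming $\phi_\infty=0$.
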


\begin{proof}
Since $S_{\omega,\gamma}(f) = \frac{1 - \omega^2}{q+2}\|f\|_{L^2}^2$ if $K_{\omega,\gamma}^{2,p-1}(f) = 0$, we have $r_{\omega,\gamma}^{2,p-1} \geq 0$.
Take a minimizing sequence $\{\phi_n\} \subset H_\text{rad}^1(\mathbb{R}^d) \setminus \{0\}$ of $r_{\omega,\gamma}^{2,p-1}$, that is, $\phi_n$ satisfies
\begin{align*}
	\frac{1 - \omega^2}{q+2}\|\phi_n\|_{L^2}^2
		= S_{\omega,\gamma}(\phi_n)
		\searrow r_{\omega,\gamma}^{2,p-1}\ \text{ as }\ n \rightarrow \infty\ \ \text{ and }\ \ 
	K_{\omega,\gamma}^{2,p-1}(\phi_n)
		= 0\ \text{ for any }\ n \in \mathbb{N}.
\end{align*}
We notice that $\|\phi_n\|_{L^2}$ is a bounded sequence.
In addition, it follows from $K_{\omega,\gamma}^{2,p-1}(\phi_n) = 0$ and Proposition \ref{Gagliardo-Nirenberg inequality with the potential} that
\begin{align*}
	\|(-\Delta_\gamma)^\frac{1}{2}\phi_n\|_{L^2}^2
		\leq \|\phi_n\|_{L^{p+1}}^{p+1}
		\leq C_\text{GN}(0)\|\phi_n\|_{L^2}^{p+1 - \frac{d(p-1)}{2}}\|(-\Delta_\gamma)^\frac{1}{2}\phi_n\|_{L^2}^\frac{d(p-1)}{2}.
\end{align*}
Since $1 < p < 1 + \frac{4}{d}$ implies $0 < \frac{d(p-1)}{2} < 2$, we have
\begin{align*}
	\|(-\Delta_\gamma)^\frac{1}{2}\phi_n\|_{L^2}^{2 - \frac{d(p-1)}{2}}
		\leq C_\text{GN}(0)\|\phi_n\|_{L^2}^{p+1 - \frac{d(p-1)}{2}},
\end{align*}
and hence $\|(-\Delta_\gamma)^\frac{1}{2}\phi_n\|_{L^2}$ is a bounded sequence.
Therefore, there exist a subsequence of $\{\phi_n\}$ (, which is denoted by the same symbol) and $\phi_\infty \in H_{\gamma,\text{rad}}^1(\mathbb{R}^d)$ such that
\begin{align*}
	\phi_n
		\xrightharpoonup[]{\hspace{0.4cm}} \phi_\infty\ \text{ in }\ H_\gamma^1(\mathbb{R}^d)\ \text{ as }\ n \rightarrow \infty\ \ \text{ and }\ \ 
	\phi_n
		\longrightarrow \phi_\infty\ \text{ in }\ L^{p+1}(\mathbb{R}^d)\ \text{ as }\ n \rightarrow \infty.
\end{align*}
By the lower semi-continuity of weak convergence, we have
\begin{align*}
	\frac{q(p-1)}{2}\|\phi_\infty\|_{L^2}^2
		\leq \frac{q(p-1)}{2}\liminf_{n \rightarrow \infty}\|\phi_n\|_{L^2}^2\ \ \text{ and }\ \ 
	K_{\omega,\gamma}^{2,p-1}(\phi_\infty)
		\leq \liminf_{n \rightarrow \infty}K_{\omega,\gamma}^{2,p-1}(\phi_n)
		= 0.
\end{align*}
We assume that $\phi_\infty = 0$ for contradiction.
Combining $K_{\omega,\gamma}^{2,p-1}(\phi_n) = 0$ and $\phi_n \longrightarrow 0$ in $L^{p+1}(\mathbb{R}^d)$, we have $\phi_n \longrightarrow 0$ in $H_\gamma^1(\mathbb{R}^d)$.
On the other hand, we have
\begin{align*}
	& \frac{q(p-1)(1 - \omega^2)}{2}\|\phi_n\|_{L^2}^2 + \frac{(q+2)(p-1)}{2}\|(-\Delta_\gamma)^\frac{1}{2}\phi_n\|_{L^2}^2 \\
		& \hspace{3.0cm} = \frac{(q+2)(p-1)}{p+1}\|\phi_n\|_{L^{p+1}}^{p+1} \\
		& \hspace{3.0cm} \leq \frac{(q+2)(p-1)}{p+1}C_\text{GN}(0)\|\phi_n\|_{L^2}^{p+1 - \frac{d(p-1)}{2}}\|(-\Delta_\gamma)^\frac{1}{2}\phi_n\|_{L^2}^\frac{d(p-1)}{2} \\
		& \hspace{3.0cm} \lesssim \left\{\frac{q(p-1)(m^2 - \omega^2)}{2}\|\phi_n\|_{L^2}^2 + \frac{(q+2)(p-1)}{2}\|(-\Delta_\gamma)^\frac{1}{2}\phi_n\|_{L^2}^2\right\}^\frac{p+1}{2}
\end{align*}
from $K_{\omega,\gamma}^{2,p-1}(\phi_n) = 0$, Lemma \ref{Gagliardo-Nirenberg inequality with the potential}, and Young's inequality.
Therefore, we obtain
\begin{align*}
	1
		\lesssim \left\{\frac{q(p-1)(1 - \omega^2)}{2}\|\phi_n\|_{L^2}^2 + \frac{(q+2)(p-1)}{2}\|(-\Delta_\gamma)^\frac{1}{2}\phi_n\|_{L^2}^2\right\}^\frac{p-1}{2},
\end{align*}
which is contradiction.
Thus, we see that $\phi_\infty \in H_{\gamma,\text{rad}}^1(\mathbb{R}^d) \setminus \{0\}$ and
\begin{align*}
	r_{\omega,\gamma}^{2,p-1}
		\leq \frac{1 - \omega^2}{q+2}\|\phi_\infty\|_{L^2}^2
		\leq \frac{1 - \omega^2}{q+2}\liminf_{n \rightarrow \infty}\|\phi_n\|_{L^2}^2
		= r_{\omega,\gamma}^{2,p-1}.
\end{align*}
We can see that $K_{\omega,\gamma}^{2,p-1}(\phi_\infty) = 0$ from Lemma \ref{Estimates for the ground state}.
\end{proof}

Next, we get a minimizer to $r_{\omega,\gamma}^{0,1}$.
Before we prepare the following lemma.

\begin{lemma}\label{Positivity of K in mass-subcritical}
Let $d \geq 3$, $1 < p < 1 + \frac{4}{d}$, $\gamma > 0$, and $1 - \omega^2 > 0$.
Assume that $\{f_n\} \subset H^1(\mathbb{R}^d) \setminus \{0\}$ satisfies $\|f_n\|_{H_\gamma^1} \longrightarrow 0$ as $n \rightarrow \infty$.
Then, there exists $n_0 \in \mathbb{N}$ such that $K_{\omega,\gamma}^{0,-1}(f_n) > 0$ for any $n \geq n_0$.
\end{lemma}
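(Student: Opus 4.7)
The plan is to show that the positive quadratic part of $K_{\omega,\gamma}^{0,-1}$ absorbs the $L^{p+1}$ term once $\|f_n\|_{H_\gamma^1}$ is sufficiently small. First I would observe that since $1 - \omega^2 > 0$ and $d \geq 3$, both coefficients $\frac{d(1-\omega^2)}{2}$ and $\frac{d-2}{2}$ are strictly positive, so the quadratic part of $K_{\omega,\gamma}^{0,-1}(f)$ is comparable to $\|f\|_{H_\gamma^1}^2$: there exists $c_1 > 0$ independent of $f \in H^1(\mathbb{R}^d)$ with
\[
\frac{d(1-\omega^2)}{2}\|f\|_{L^2}^2 + \frac{d-2}{2}\|(-\Delta_\gamma)^\frac{1}{2}f\|_{L^2}^2 \geq c_1 \|f\|_{H_\gamma^1}^2.
\]

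Next, I would bound the nonlinear term via the Gagliardo--Nirenberg inequality (Lemma \ref{Gagliardo-Nirenberg inequality with the potential}),
\[
\|f\|_{L^{p+1}}^{p+1} \leq C_{\text{GN}}(0)\,\|f\|_{L^2}^{p+1 - \frac{d(p-1)}{2}}\,\|(-\Delta_\gamma)^\frac{1}{2}f\|_{L^2}^{\frac{d(p-1)}{2}} \leq c_2\,\|f\|_{H_\gamma^1}^{p+1},
\]
where each factor on the right is bounded by $\|f\|_{H_\gamma^1}$ and the two exponents sum to $p+1$. Combining the two estimates yields
\[
K_{\omega,\gamma}^{0,-1}(f_n) \geq \|f_n\|_{H_\gamma^1}^2\left(c_1 - \frac{d\,c_2}{p+1}\|f_n\|_{H_\gamma^1}^{p-1}\right),
\]
which is strictly positive for all sufficiently large $n$, since $p - 1 > 0$, $\|f_n\|_{H_\gamma^1} \to 0$, and $f_n \neq 0$ guarantees $\|f_n\|_{H_\gamma^1} > 0$.

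I do not expect any real obstacle here---this is a direct coercivity/smallness argument exactly parallel to Lemma \ref{Positivity of K} in the inter-critical case. The only points requiring care are the positivity of the coefficient $\frac{d-2}{2}$, which uses $d \geq 3$, and the fact that the nonlinear term is of strictly higher order than the quadratic part, which follows from $p > 1$; the mass-subcritical condition $p < 1 + \tfrac{4}{d}$ itself plays no role in this step and is used only elsewhere in the $(\alpha,\beta)=(0,-1)$ minimization.
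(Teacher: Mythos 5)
Your proof is correct and takes essentially the same route as the paper's: the coercivity of the quadratic part of $K_{\omega,\gamma}^{0,-1}$ with constant $\min\{\tfrac{d(1-\omega^2)}{2},\tfrac{d-2}{2}\}$, followed by Lemma \ref{Gagliardo-Nirenberg inequality with the potential} to bound the $L^{p+1}$ term by $\|f_n\|_{H_\gamma^1}^{p+1}$, so that the higher-order term is absorbed once $\|f_n\|_{H_\gamma^1}$ is small.
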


\begin{proof}
Using Lemma \ref{Gagliardo-Nirenberg inequality with the potential}, it follows that
\begin{align*}
	K_{\omega,\gamma}^{0,-1}(f_n)
		& \geq \frac{d(1-\omega^2)}{2}\|f_n\|_{L^2}^2 + \frac{d-2}{2}\|(-\Delta_\gamma)^\frac{1}{2}f_n\|_{L^2}^2 - \frac{dC_\text{GN}(0)}{p+1}\|f_n\|_{L^2}^{p+1 - \frac{d(p-1)}{2}}\|(-\Delta_\gamma)^\frac{1}{2}f_n\|_{L^2}^\frac{d(p-1)}{2} \\
		& \geq \left[\min\left\{\frac{d(1-\omega^2)}{2},\frac{d-2}{2}\right\} - \frac{dC_\text{GN}(0)}{p+1}\|f_n\|_{H_\gamma^1}^{p-1}\right]\|f_n\|_{H_\gamma^1}^2 \\
		& \geq \frac{1}{2}\min\left\{\frac{d(1-\omega^2)}{2},\frac{d-2}{2}\right\}\|f_n\|_{H_\gamma^1}^2
		> 0
\end{align*}
for sufficiently large $n$.
\end{proof}

\begin{proposition}\label{Minimizer to r^{0,-1}}
Let $d \geq 3$, $1 < p < 1 + \frac{4}{d}$, $\gamma > 0$, and $1 - \omega^2 > 0$.
Then, $r_{\omega,\gamma}^{0,-1}$ has a minimizer.
\end{proposition}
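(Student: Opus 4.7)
The plan is to mimic the blueprint of Proposition \ref{Characterization of the ground state in mass-subcritical 1}, swapping the functional $T_{\omega,\gamma}^{2,p-1}$ for $T_{\omega,\gamma}^{0,-1}(f) = \tfrac{1}{d}\|(-\Delta_\gamma)^{1/2}f\|_{L^2}^2$. I take a minimizing sequence $\{\phi_n\} \subset H_\text{rad}^1(\mathbb{R}^d) \setminus \{0\}$ with $K_{\omega,\gamma}^{0,-1}(\phi_n) = 0$ and $S_{\omega,\gamma}(\phi_n) = T_{\omega,\gamma}^{0,-1}(\phi_n) \searrow r_{\omega,\gamma}^{0,-1}$; since $S = T^{0,-1}$ on the constraint set, the sequence $\|(-\Delta_\gamma)^{1/2}\phi_n\|_{L^2}$ is automatically bounded.

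To bound $\|\phi_n\|_{L^2}$, I rewrite the constraint $K_{\omega,\gamma}^{0,-1}(\phi_n) = 0$ as
$$\frac{d(1-\omega^2)}{2}\|\phi_n\|_{L^2}^2 + \frac{d-2}{2}\|(-\Delta_\gamma)^{1/2}\phi_n\|_{L^2}^2 = \frac{d}{p+1}\|\phi_n\|_{L^{p+1}}^{p+1}$$
and estimate the right-hand side by Lemma \ref{Gagliardo-Nirenberg inequality with the potential}. Since $1 < p < 1 + 4/d$ and $d \geq 3$, the exponent $p+1 - d(p-1)/2$ on $\|\phi_n\|_{L^2}$ produced by Gagliardo--Nirenberg is strictly less than $2$ (indeed $2 - (p+1-d(p-1)/2) = (d-2)(p-1)/2 > 0$), so the inequality closes and yields a uniform $L^2$ bound. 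Hence $\{\phi_n\}$ is bounded in $H_\gamma^1$.

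Weak compactness in $H_\gamma^1$ together with the compact radial embedding $H_\text{rad}^1(\mathbb{R}^d) \hookrightarrow L^{p+1}(\mathbb{R}^d)$ produce a subsequence (not relabelled) converging to some $\phi_\infty \in H_\text{rad}^1(\mathbb{R}^d)$ weakly in $H_\gamma^1$ and strongly in $L^{p+1}$. The one delicate point is showing $\phi_\infty \neq 0$: if it vanished, the strong $L^{p+1}$ convergence together with the displayed $K = 0$ identity would force $\|\phi_n\|_{H_\gamma^1} \to 0$, which is ruled out by Lemma \ref{Positivity of K in mass-subcritical}. This is the main conceptual obstacle, and it is essentially the only place the mass-subcritical range enters beyond the $L^2$ bound.

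Given $\phi_\infty \neq 0$, lower semicontinuity of $\|\cdot\|_{L^2}$ and $\|(-\Delta_\gamma)^{1/2}\cdot\|_{L^2}$ (the latter via the Hilbert structure on $(\dot{H}^1, \langle\cdot,\cdot\rangle_{\dot{H}_\gamma^1})$ established earlier) plus strong $L^{p+1}$ convergence give $T_{\omega,\gamma}^{0,-1}(\phi_\infty) \leq r_{\omega,\gamma}^{0,-1}$ and $K_{\omega,\gamma}^{0,-1}(\phi_\infty) \leq 0$. To upgrade to equality, observe that $K_{\omega,\gamma}^{0,-1}(\lambda\phi_\infty) = A\lambda^2 - B\lambda^{p+1}$ with $A, B > 0$, so there is a unique $\lambda \in (0,1]$ with $K_{\omega,\gamma}^{0,-1}(\lambda\phi_\infty) = 0$. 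Then
$$r_{\omega,\gamma}^{0,-1} \leq S_{\omega,\gamma}(\lambda\phi_\infty) = T_{\omega,\gamma}^{0,-1}(\lambda\phi_\infty) = \lambda^2 T_{\omega,\gamma}^{0,-1}(\phi_\infty) \leq \lambda^2 r_{\omega,\gamma}^{0,-1},$$
and since $\phi_\infty \neq 0$ implies $T_{\omega,\gamma}^{0,-1}(\phi_\infty) > 0$, hence $r_{\omega,\gamma}^{0,-1} > 0$, this chain forces $\lambda = 1$, so $\phi_\infty$ attains $r_{\omega,\gamma}^{0,-1}$.
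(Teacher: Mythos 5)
Your proof is correct and follows essentially the same route as the paper's: the same minimizing sequence with the automatic $\dot{H}_\gamma^1$ bound, the same compactness via the radial embedding $H_\text{rad}^1 \hookrightarrow L^{p+1}$, the same appeal to Lemma \ref{Positivity of K in mass-subcritical} to rule out $\phi_\infty = 0$, and the same rescaling argument forcing $\lambda = 1$. The only (immaterial) difference is that you derive the uniform $L^2$ bound from Lemma \ref{Gagliardo-Nirenberg inequality with the potential} with subcritical exponent $p+1-\tfrac{d(p-1)}{2}<2$, whereas the paper uses Young's inequality together with the Sobolev embedding; both close the estimate.
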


\begin{proof}
Take a minimizing sequence $\{\phi_n\} \subset H_\text{rad}^1(\mathbb{R}^d) \setminus \{0\}$, that is, $\phi_n$ satisfies
\begin{align*}
	\frac{1}{d}\|(-\Delta_\gamma)^\frac{1}{2}\phi_n\|_{L^2}^2
		= S_{\omega,\gamma}(\phi_n)
		\searrow r_{\omega,\gamma}^{0,-1}\ \text{ as }\ n \rightarrow \infty \ \ \text{ and }\ \ 
	K_{\omega,\gamma}^{0,-1}(\phi_n)
		= 0\ \text{ for any }\ n \in \mathbb{N}.
\end{align*}
We see that $\{\phi_n\}$ is bounded in $\dot{H}_\gamma^1(\mathbb{R}^d)$.
By $K_{\omega,\gamma}^{0,-1}(\phi_n) = 0$, Young's inequality, and Sobolev's embedding, we have
\begin{align*}
	\frac{d(1 - \omega^2)}{2}\|\phi_n\|_{L^2}^2
		& = \frac{d}{p+1}\|\phi_n\|_{L^{p+1}}^{p+1} - \frac{d-2}{2}\|(-\Delta_\gamma)^\frac{1}{2}\phi_n\|_{L^2}^2 \\
		& \leq \varepsilon\|\phi_n\|_{L^2}^2 + C(\varepsilon)\|\phi_n\|_{L^\frac{2d}{d-2}}^\frac{2d}{d-2} - \frac{d-2}{2}\|(-\Delta_\gamma)^\frac{1}{2}\phi_n\|_{L^2}^2 \\
		& \leq \varepsilon\|\phi_n\|_{L^2}^2 + C(\varepsilon)\|(-\Delta_\gamma)^\frac{1}{2}\phi_n\|_{L^2}^\frac{2d}{d-2} - \frac{d-2}{2}\|(-\Delta_\gamma)^\frac{1}{2}\phi_n\|_{L^2}^2,
\end{align*}
which combined with boundedness of $\{\phi_n\}$ in $\dot{H}_\gamma^1(\mathbb{R}^d)$ implies boundedness of $\{\phi_n\}$ in $L^2(\mathbb{R}^d)$.
Since $H_{\gamma,\text{rad}}^1(\mathbb{R}^d) \subset L^{p+1}(\mathbb{R}^d)$ is compact, there exist a subsequence of $\{\phi_n\}$ (, which denoted by the same symbol) and $\phi_\infty \in H_\gamma^1(\mathbb{R}^d)$ such that
\begin{align*}
	\phi_n
		\xrightharpoonup[]{\hspace{0.4cm}} \phi_\infty\ \text{ in }\ H_\gamma^1(\mathbb{R}^d)\ \ \text{ and }\ \ 
	\phi_n
		\longrightarrow \phi_\infty\ \text{ in }\ L^{p+1}(\mathbb{R}^d)
\end{align*}
as $n \rightarrow \infty$.
In addition, we see that
\begin{gather*}
	\|\phi_\infty\|_{L^2}
		\leq \liminf_{n \rightarrow \infty}\|\phi_n\|_{L^2},\\
	\|(-\Delta_\gamma)^\frac{1}{2}\phi_\infty\|_{L^2}
		\leq \liminf_{n \rightarrow \infty}\|(-\Delta_\gamma)^\frac{1}{2}\phi_n\|_{L^2},\\
	\|\phi_\infty\|_{L^{p+1}}
		= \lim_{n \rightarrow \infty}\|\phi_n\|_{L^{p+1}}.
\end{gather*}
These relations deduce $K_{\omega,\gamma}^{0,-1}(\phi_\infty) \leq \liminf_{n \rightarrow \infty}K_{\omega,\gamma}^{0,-1}(\phi_n) = 0$.
Since $\phi_\infty \neq 0$ from Lemma \ref{Positivity of K in mass-subcritical}, we have
\begin{align*}
	r_{\omega,\gamma}^{0,-1}
		& \leq S_{\omega,\gamma}(\lambda\phi_\infty)
		= \frac{1}{d}\|(-\Delta_\gamma)^\frac{1}{2}(\lambda\phi_\infty)\|_{L^2}^2 \\
		& \leq \frac{1}{d}\|(-\Delta_\gamma)^\frac{1}{2}\phi_\infty\|_{L^2}^2
		\leq \frac{1}{d}\liminf_{n \rightarrow \infty}\|(-\Delta_\gamma)^\frac{1}{2}\phi_n\|_{L^2}^2
		= r_{\omega,\gamma}^{0,-1},
\end{align*}
where $\lambda \in (0,1]$ satisfies $K_{\omega,\gamma}^{0,-1}(\lambda\phi_\infty) = 0$.
So, $\lambda$ must be $1$ and $\phi_\infty$ is a minimizer to $r_{\omega,\gamma}^{0,-1}$.
\end{proof}

\section{Characterization of the ground state}\label{Characterization of the ground state}

In this section, we prove $\mathcal{G}_{\omega,\gamma,\text{rad}} \subset \mathcal{M}_{\omega,\gamma,\text{rad}}^{\alpha,\beta}$.
We note that the inverse direction also holds for $p \in (1,1 + \frac{4}{d}) \cup (1 + \frac{4}{d},1 + \frac{4}{d-2})$.\\

\noindent
6.1. \textbf{Case $1 < p < 1 + \frac{4}{d}$ and $1 + \frac{4}{d} < p < 1 + \frac{4}{d-2}$.}

\begin{proposition}\label{M=G}
Let $d \geq 3$, $p \in (1,1 + \frac{4}{d}) \cup (1 + \frac{4}{d},1 + \frac{4}{d-2})$, $\gamma > 0$, and $1 - \omega^2 > 0$.
Assume that $(\alpha,\beta) = (2,p-1), (0,-1)$ when $1 < p < 1 + \frac{4}{d}$ and $(\alpha,\beta) = (d,2)$ when $1 + \frac{4}{d} < p < 1 + \frac{4}{d-2}$.
Then, $\mathcal{M}_{\omega,\gamma,\text{rad}}^{\alpha,\beta} = \mathcal{G}_{\omega,\gamma,\text{rad}}$ holds.
\end{proposition}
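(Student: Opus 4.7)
The plan is to establish $\mathcal{M}_{\omega,\gamma,\text{rad}}^{\alpha,\beta} = \mathcal{G}_{\omega,\gamma,\text{rad}}$ by proving both inclusions, the key ingredients being a Pohozaev-type identity for critical points of $S_{\omega,\gamma}$ together with a Lagrange multiplier argument applied to the minimizers of $r_{\omega,\gamma}^{\alpha,\beta}$ produced in Section~\ref{Existence of a minimizer}.

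For $\mathcal{G}_{\omega,\gamma,\text{rad}} \subset \mathcal{M}_{\omega,\gamma,\text{rad}}^{\alpha,\beta}$, I would fix $Q \in \mathcal{G}_{\omega,\gamma,\text{rad}}$. Proposition~\ref{Regularity of the ground state} supplies the smoothness and exponential decay needed to guarantee $\mathcal{D}^{\alpha,\beta}Q = \alpha Q + \beta x\cdot\nabla Q \in H_{\text{rad}}^1(\mathbb{R}^d)$; testing $S_{\omega,\gamma}'(Q)=0$ against this radial direction then yields the Pohozaev-type identity
\begin{align*}
K_{\omega,\gamma}^{\alpha,\beta}(Q) = \langle S_{\omega,\gamma}'(Q),\mathcal{D}^{\alpha,\beta}Q\rangle = 0,
\end{align*}
so that $Q$ is admissible in the minimization defining $r_{\omega,\gamma}^{\alpha,\beta}$ and $S_{\omega,\gamma}(Q) \ge r_{\omega,\gamma}^{\alpha,\beta}$. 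Conversely, the existence results of Section~\ref{Existence of a minimizer} produce a minimizer $\phi_\ast$ of $r_{\omega,\gamma}^{\alpha,\beta}$, and the Lagrange multiplier theorem applied to the constraint $\{K_{\omega,\gamma}^{\alpha,\beta} = 0\}$ gives some $\eta \in \mathbb{R}$ with $S_{\omega,\gamma}'(\phi_\ast) = \eta (K_{\omega,\gamma}^{\alpha,\beta})'(\phi_\ast)$. The principle of symmetric criticality (or direct inspection of the resulting rotationally invariant elliptic PDE) promotes this identity from $(H_{\text{rad}}^1)^\ast$ to $(H^1)^\ast$.

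The main obstacle, and the step that requires genuine work, is verifying $\eta = 0$. I would pair the Euler--Lagrange identity with $\mathcal{D}^{\alpha,\beta}\phi_\ast$ and use $K_{\omega,\gamma}^{\alpha,\beta}(\phi_\ast)=0$ to obtain
\begin{align*}
0 = K_{\omega,\gamma}^{\alpha,\beta}(\phi_\ast) = \eta\, \mathcal{D}^{\alpha,\beta} K_{\omega,\gamma}^{\alpha,\beta}(\phi_\ast),
\end{align*}
and then check $\mathcal{D}^{\alpha,\beta} K_{\omega,\gamma}^{\alpha,\beta}(\phi_\ast) \ne 0$ case by case. For $(\alpha,\beta)=(d,2)$, an elementary scaling computation combined with $K_{\omega,\gamma}^{d,2}(\phi_\ast)=0$ reduces this quantity to a nonzero multiple of $[4-(p-1)d]\|\phi_\ast\|_{L^{p+1}}^{p+1}$, which does not vanish precisely because $p \ne 1 + 4/d$. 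Analogous sign checks handle $(\alpha,\beta)=(2,p-1)$ and $(0,-1)$ in the mass-subcritical regime, using the positivity of $\|\phi_\ast\|_{L^2}$ and $\|(-\Delta_\gamma)^{1/2}\phi_\ast\|_{L^2}$ together with the relation between $K_{\omega,\gamma}^{\alpha,\beta}$ and $T_{\omega,\gamma}^{\alpha,\beta}$ recorded in Section~\ref{Existence of a minimizer}. Once $\eta = 0$, we conclude $\phi_\ast \in \mathcal{A}_{\omega,\gamma,\text{rad}}$, so that $S_{\omega,\gamma}(Q) \le S_{\omega,\gamma}(\phi_\ast) = r_{\omega,\gamma}^{\alpha,\beta}$ by the definition of $\mathcal{G}_{\omega,\gamma,\text{rad}}$; together with the lower bound from the previous paragraph this gives $Q \in \mathcal{M}_{\omega,\gamma,\text{rad}}^{\alpha,\beta}$.

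For the reverse inclusion, any $\phi \in \mathcal{M}_{\omega,\gamma,\text{rad}}^{\alpha,\beta}$ is itself a minimizer of $r_{\omega,\gamma}^{\alpha,\beta}$, so the Lagrange multiplier argument above applies to $\phi$ and gives $\phi \in \mathcal{A}_{\omega,\gamma,\text{rad}}$. For any $\psi \in \mathcal{A}_{\omega,\gamma,\text{rad}}$, Proposition~\ref{Regularity of the ground state} combined with the same Pohozaev computation yields $K_{\omega,\gamma}^{\alpha,\beta}(\psi) = 0$, hence $S_{\omega,\gamma}(\psi) \ge r_{\omega,\gamma}^{\alpha,\beta} = S_{\omega,\gamma}(\phi)$, and therefore $\phi \in \mathcal{G}_{\omega,\gamma,\text{rad}}$.
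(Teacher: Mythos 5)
Your proposal is correct and follows essentially the same route as the paper's own proof. The core ingredients are identical: (i) the Lagrange multiplier argument combined with the case-by-case computation showing $\mathcal{D}^{\alpha,\beta}K_{\omega,\gamma}^{\alpha,\beta}(\phi)\neq 0$ when $K_{\omega,\gamma}^{\alpha,\beta}(\phi)=0$ (paper's equations \eqref{138}--\eqref{114}), which forces $\eta=0$ and places the minimizer in $\mathcal{A}_{\omega,\gamma,\text{rad}}$; and (ii) the Pohozaev-type identity $K_{\omega,\gamma}^{\alpha,\beta}(\psi)=\langle S_{\omega,\gamma}'(\psi),\mathcal{D}^{\alpha,\beta}\psi\rangle=0$ for any $\psi\in\mathcal{A}_{\omega,\gamma,\text{rad}}$, for which Proposition~\ref{Regularity of the ground state} is invoked exactly as you indicate. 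You organize the two inclusions slightly differently — you derive the lower bound $S_{\omega,\gamma}(Q)\geq r_{\omega,\gamma}^{\alpha,\beta}$ for a ground state $Q$ directly from Pohozaev and then match it against the value attained by the minimizer, whereas the paper first settles $\mathcal{M}_{\omega,\gamma,\text{rad}}^{\alpha,\beta}\subset\mathcal{G}_{\omega,\gamma,\text{rad}}$ and then compares any $\phi\in\mathcal{G}_{\omega,\gamma,\text{rad}}$ with an element of $\mathcal{M}_{\omega,\gamma,\text{rad}}^{\alpha,\beta}$ — but this is a cosmetic reshuffling of the same argument. Your explicit mention of the principle of symmetric criticality is a worthwhile remark the paper leaves implicit.
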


\begin{proof}
First, we prove $\mathcal{M}_{\omega,\gamma,\text{rad}}^{\alpha,\beta} \subset \mathcal{G}_{\omega,\gamma,\text{rad}}$.
We take $\phi \in \mathcal{M}_{\omega,\gamma,\text{rad}}^{\alpha,\beta}$.
From $K_{\omega,\gamma}^{\alpha,\beta}(\phi) = 0$, we have
\begin{align}
	\<(K_{\omega,\gamma}^{\alpha,\beta})'(\phi),\mathcal{D}^{\alpha,\beta}\phi\>
		< 0. \label{138}
\end{align}
Indeed, we get
\begin{align*}
	\<(K_{\omega,\gamma}^{\alpha,\beta})'(\phi),\mathcal{D}^{\alpha,\beta}\phi\>
		& = \mathcal{D}^{\alpha,\beta}K_{\omega,\gamma}^{\alpha,\beta}(\phi) \\
		& = \begin{cases}
			&\hspace{-0.4cm}\displaystyle{
				\mathcal{D}^{\alpha,\beta}K_{\omega,\gamma}^{\alpha,\beta}(\phi) - \{2\alpha - (d-2)\beta\}K_{\omega,\gamma}^{\alpha,\beta}(\phi),\ \text{ if }\ (\alpha,\beta) = (2,p-1), (d,2)
			}\\
			&\hspace{-0.4cm}\displaystyle{
				\mathcal{D}^{0,-1}K_{\omega,\gamma}^{0,-1}(\phi) - dK_{\omega,\gamma}^{0,-1}(\phi)
			}
		\end{cases} \\
		& = \begin{cases}
		&\hspace{-0.4cm}\displaystyle{
			- \frac{(p-1)d\{(p-1)d - 4\}}{p+1}\|\phi\|_{L^{p+1}}^{p+1},\ \ \text{ if }\ \ (\alpha,\beta) = (d,2),
		} \\
		&\hspace{-0.4cm}\displaystyle{
			- (p-1)\{4 - d(p-1)\}\|\phi\|_{L^2}^2,\hspace{0.61cm}\ \ \text{ if }\ \ (\alpha,\beta) = (2,p-1),
		} \\
		&\hspace{-0.4cm}\displaystyle{
			- 2(d-2)\|(-\Delta_\gamma)^\frac{1}{2}\phi\|_{L^2}^2,\hspace{1.47cm}\ \ \text{ if }\ \ (\alpha,\beta) = (0,-1)
		}
		\end{cases}
\end{align*}
when $\phi$ satisfies $K_{\omega,\gamma}^{\alpha,\beta}(\phi) = 0$.
Thus, there exists the Lagrange multiplier $\eta \in \mathbb{R}$ such that
\begin{align}
	S_{\omega,\gamma}'(\phi)
		= \eta (K_{\omega,\gamma}^{\alpha,\beta})'(\phi). \label{113}
\end{align}
Using this identity,
\begin{align}
	0
		= K_{\omega,\gamma}^{\alpha,\beta}(\phi)
		= \mathcal{D}^{\alpha,\beta}S_{\omega,\gamma}(\phi)
		= \<S_{\omega,\gamma}'(\phi), \mathcal{D}^{\alpha,\beta}\phi\>
		= \eta \<(K_{\omega,\gamma}^{\alpha,\beta})'(\phi),\mathcal{D}^{\alpha,\beta}\phi\>. \label{114}
\end{align}
Combining \eqref{138} and \eqref{114}, we have $\eta = 0$.
Therefore, we obtain $S_{\omega,\gamma}'(\phi) = 0$ by \eqref{113}.
We take any function $\psi \in \mathcal{A}_{\omega,\gamma,\text{rad}}$.
Then, $K_{\omega,\gamma}^{\alpha,\beta}(\psi) = \<S_{\omega,\gamma}'(\psi), \mathcal{D}^{\alpha,\beta}(\psi)\> = 0$ and hence, $S_{\omega,\gamma}(\phi) \leq S_{\omega,\gamma}(\psi)$, that is, $\phi \in \mathcal{G}_{\omega,\gamma,\text{rad}}$. \\
Next, we prove $\mathcal{G}_{\omega,\gamma,\text{rad}} \subset \mathcal{M}_{\omega,\gamma,\text{rad}}^{\alpha,\beta}$.
We take $\phi \in \mathcal{G}_{\omega,\gamma,\text{rad}}$.
Since $\mathcal{M}_{\omega,\gamma,\text{rad}}^{\alpha,\beta}$ is not empty, we can take a function $\psi \in \mathcal{M}_{\omega,\gamma,\text{rad}}^{\alpha,\beta} \subset \mathcal{G}_{\omega,\gamma,\text{rad}}$.
Let $\Phi \in H_\text{rad}^1(\mathbb{R}^d) \setminus \{0\}$ satisfy $K_{\omega,\gamma}^{\alpha,\beta}(\Phi) = 0$.
Since $S_{\omega,\gamma}(\phi) = S_{\omega,\gamma}(\psi) \leq S_{\omega,\gamma}(\Phi)$ and $K_{\omega,\gamma}^{\alpha,\beta}(\phi) = \<S_{\omega,\gamma}'(\phi),\mathcal{D}^{\alpha,\beta}\phi\> = 0$, we obtain $\phi \in \mathcal{M}_{\omega,\gamma,\text{rad}}^{\alpha,\beta}$.
\end{proof}

\noindent
6.2. \textbf{Case $p = 1 + \frac{4}{d}$.}

\begin{proposition}
Let $d \geq 3$, $p = 1 + \frac{4}{d}$, $\gamma > 0$, and $1 - \omega^2 > 0$.
Then, $\mathcal{G}_{\omega,\gamma,\text{rad}}$ is not empty and $\mathcal{G}_{\omega,\gamma,\text{rad}} \subset \mathcal{M}_{\omega,\gamma,\text{rad}}^{d,2}$ holds.
\end{proposition}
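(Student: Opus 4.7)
The plan is to adapt the Lagrange-multiplier argument of Proposition \ref{M=G} to the mass-critical exponent $p = 1 + \tfrac{4}{d}$, where it degenerates: since $\mathcal{D}^{d,2}$ leaves both $\|\cdot\|_{L^2}^2$ and the constraint set $\{K_{\omega,\gamma}^{d,2} = 0\}$ invariant, the quantity $\langle (K_{\omega,\gamma}^{d,2})'(Q), \mathcal{D}^{d,2}Q\rangle$ vanishes on the constraint and one cannot read off $\eta = 0$ directly. The escape route is the mass-critical rescaling $Q \mapsto c^{d/4}Q(c^{1/2}\,\cdot\,)$, which converts any multiplier-perturbed equation back into \eqref{SP} while preserving $\|Q\|_{L^2}$.

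Concretely, take a minimizer $Q$ of $r_{\omega,\gamma}^{d,2}$ (Proposition \ref{Existence of minimizer for mass-critical}). The Lagrange relation $S_{\omega,\gamma}'(Q) = \eta (K_{\omega,\gamma}^{d,2})'(Q)$ becomes
\begin{align*}
	(1 - \omega^2)Q + (1-4\eta)(-\Delta_\gamma)Q = (1-4\eta)|Q|^{p-1}Q.
\end{align*}
Pairing with $Q$ and using $K_{\omega,\gamma}^{d,2}(Q) = 0$ gives $(1-\omega^2)\|Q\|_{L^2}^2 = (1-4\eta)\tfrac{p-1}{p+1}\|Q\|_{L^{p+1}}^{p+1}$, hence $c := 1 - 4\eta > 0$. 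Setting $\tilde{Q}(x) := c^{d/4}Q(c^{1/2}x)$, direct substitution shows $\tilde{Q}$ solves \eqref{SP}, so $\tilde{Q} \in \mathcal{A}_{\omega,\gamma,\text{rad}}$. The mass-critical identity $2/(p-1) = d/2$ yields $\|\tilde{Q}\|_{L^2} = \|Q\|_{L^2}$ and $K_{\omega,\gamma}^{d,2}(\tilde{Q}) = c\,K_{\omega,\gamma}^{d,2}(Q) = 0$, so $S_{\omega,\gamma}(\tilde{Q}) = T_{\omega,\gamma}^{d,2}(\tilde{Q}) = \tfrac{1-\omega^2}{2}\|Q\|_{L^2}^2 = r_{\omega,\gamma}^{d,2}$.

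Next, one checks that every $\psi \in \mathcal{A}_{\omega,\gamma,\text{rad}}$ satisfies $K_{\omega,\gamma}^{d,2}(\psi) = 0$. The identity $K_{\omega,\gamma}^{1,0}(\psi) = \langle S_{\omega,\gamma}'(\psi),\psi\rangle = 0$ is immediate, while $K_{\omega,\gamma}^{0,-1}(\psi) = 0$ is Pohozaev's identity, justified by the $C^2$-regularity on $\mathbb{R}^d \setminus \{0\}$ with exponential decay from Proposition \ref{Regularity of the ground state}. Combining these with the algebraic relation $K_{\omega,\gamma}^{d,2} = dK_{\omega,\gamma}^{1,0} - 2K_{\omega,\gamma}^{0,-1}$ yields $K_{\omega,\gamma}^{d,2}(\psi) = 0$, whence $S_{\omega,\gamma}(\psi) = T_{\omega,\gamma}^{d,2}(\psi) \geq r_{\omega,\gamma}^{d,2}$. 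Since $\tilde{Q}$ achieves equality, $\tilde{Q} \in \mathcal{G}_{\omega,\gamma,\text{rad}}$, so the set is non-empty; for any $\phi \in \mathcal{G}_{\omega,\gamma,\text{rad}}$ one has $S_{\omega,\gamma}(\phi) = S_{\omega,\gamma}(\tilde{Q}) = r_{\omega,\gamma}^{d,2}$ together with $K_{\omega,\gamma}^{d,2}(\phi) = 0$, so $\phi \in \mathcal{M}_{\omega,\gamma,\text{rad}}^{d,2}$.

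The main obstacle is identifying the correct rescaling and verifying $c > 0$; once the multiplier-perturbed equation is recognized as scaling-equivalent to \eqref{SP}, the mass-critical invariance of $\|\cdot\|_{L^2}^2$, which is exactly what defeats the direct Lagrange argument, forces the infimum of $T_{\omega,\gamma}^{d,2}$ to be attained at a genuine stationary point of $S_{\omega,\gamma}$.
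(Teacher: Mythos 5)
Your argument is correct and reaches the same conclusion as the paper, but by a genuinely different route. The paper's proof goes through the Gagliardo--Nirenberg constant: it evaluates $r_{\omega,\gamma}^{d,2} = \tfrac{1-\omega^2}{2}\{\tfrac{d+2}{dC_\text{GN}(\gamma)}\}^{d/2}$ explicitly, concludes that the minimizer $\psi_\infty$ is a critical point of the scale-invariant quotient $J_\gamma$, writes down the Euler--Lagrange equation for $J_\gamma$, and rescales $\psi_\infty(x) = \nu^{d/2}Q_{\omega,\gamma}(\nu x)$ with $\nu^2 = \{(d+2)r_{\omega,\gamma}^{d,2}\}^{-1}$ to recover \eqref{SP}. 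You instead apply a Lagrange multiplier directly to the constrained problem defining $r_{\omega,\gamma}^{d,2}$, observe that the multiplier degenerates the Nehari-type cancellation used in Proposition \ref{M=G} (so $\eta$ need not vanish), but show that the multiplier-perturbed equation is scale-equivalent to \eqref{SP} by the $L^2$-critical scaling. This avoids any computation with $C_\text{GN}(\gamma)$; in exchange you should note explicitly the constraint qualification $(K_{\omega,\gamma}^{d,2})'(Q) \neq 0$ needed to invoke the Lagrange multiplier theorem; this follows immediately from $K_{\omega,\gamma}^{d,2}(Q)=0$, since $(K_{\omega,\gamma}^{d,2})'(Q)=0$ would give $\|(-\Delta_\gamma)^{1/2}Q\|_{L^2}^2 = \|Q\|_{L^{p+1}}^{p+1}$ and combined with the constraint forces $\|Q\|_{L^{p+1}}=0$, contradicting $Q\neq 0$. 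With that detail added, your alternative proof is complete and somewhat more elementary. Your step showing $K_{\omega,\gamma}^{d,2}(\psi)=0$ for $\psi\in\mathcal{A}_{\omega,\gamma,\text{rad}}$ via the Pohozaev decomposition $K^{d,2}=dK^{1,0}-2K^{0,-1}$ is fine, though the shorter route used in the paper, namely $K_{\omega,\gamma}^{d,2}(\psi)=\langle S_{\omega,\gamma}'(\psi),\mathcal{D}^{d,2}\psi\rangle=0$ once $x\cdot\nabla\psi\in H^1$ (Proposition \ref{Regularity of the ground state}), gives the same thing without the splitting.
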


\begin{proof}
We note that $r_{\omega,\gamma}^{d,2}$ has a minimizer $\psi_\infty \in H_\text{rad}^1(\mathbb{R}^d) \setminus \{0\}$ satisfying
\begin{align*}
	\|\psi_\infty\|_{L^2}^2
		= \frac{2}{1 - \omega^2}r_{\omega,\gamma}^{d,2},\ \ \ 
	\|(-\Delta_\gamma)^\frac{1}{2}\psi_\infty\|_{L^2}^2
		= \frac{d}{d+2},\ \ \text{ and }\ \ 
	\|\psi_\infty\|_{L^\frac{2(d+2)}{d}}^\frac{2(d+2)}{d}
		= 1
\end{align*}
from Proposition \ref{Existence of minimizer for mass-critical}.
Since $C_\text{GN}(\gamma)$ is the best constant of the Gagliardo-Nirenberg inequality (Lemma \ref{Gagliardo-Nirenberg inequality with the potential}), we can take a minimizing sequence $\{\phi_n\} \subset H_\text{rad}^1(\mathbb{R}^d) \setminus \{0\}$ satisfying
\begin{align*}
	J_\gamma(\phi_n)
		:= \frac{\|\phi_n\|_{L^2}^\frac{4}{d}\|(-\Delta_\gamma)^\frac{1}{2}\phi_n\|_{L^2}^2}{\|\phi_n\|_{L^\frac{2(d+2)}{d}}^\frac{2(d+2)}{d}}
		\searrow \frac{1}{C_\text{GN}(\gamma)}\ \text{ as }\ n \rightarrow \infty.
\end{align*}
We set $\psi_n(x) := \phi_n\bigl(\frac{x}{c_n}\bigr)$ for $c_n := \sqrt{\frac{d+2}{d}\cdot\|(-\Delta_\gamma)^\frac{1}{2}\phi_n\|_{L^2}^2\|\phi_n\|_{L^\frac{2(d+2)}{d}}^{-\frac{2(d+2)}{d}}}$.
Then, we have
\begin{align*}
	J_\gamma(\psi_n)
		= J_\gamma(\phi_n)
		= \frac{d}{d+2}\cdot\left(\frac{2}{1 - \omega^2}\right)^\frac{2}{d}\{T_{\omega,\gamma}^{d,2}(\psi_n)\}^\frac{2}{d}
\end{align*}
and
\begin{align*}
	K_{\omega,\gamma}^{d,2}(\psi_n)
		= c_n^{d-2}\left\{2\|(-\Delta)^\frac{1}{2}\phi_n\|_{L^2}^2 - \frac{2d}{d+2}c_n^2\|\phi_n\|_{L^\frac{2(d+2)}{d}}^\frac{2(d+2)}{d}\right\}	
		= 0.
\end{align*}
Therefore, it follows that
\begin{align*}
	r_{\omega,\gamma}^{d,2}
		\leq T_{\omega,\gamma}^{d,2}(\psi_n)
		= \frac{1 - \omega^2}{2}\left\{\frac{d+2}{d}J_\gamma(\psi_n)\right\}^\frac{d}{2}
		\longrightarrow \frac{1 - \omega^2}{2}\left\{\frac{d+2}{dC_\text{GN}(\gamma)}\right\}^\frac{d}{2}\ \text{ as }\ n \rightarrow \infty.
\end{align*}
Combining this inequality and Lemma \ref{Lower bound of r}, we have
\begin{align*}
	r_{\omega,\gamma}^{d,2}
		= \frac{1 - \omega^2}{2}\left\{\frac{d+2}{dC_\text{GN}(\gamma)}\right\}^\frac{d}{2},
\end{align*}
which implies that $\psi_\infty$ is a critical point of $J_\gamma$.
For any $\varphi \in C_c^\infty(\mathbb{R}^d)$, it follows that
\begin{align*}
	\left.\frac{d}{ds}\right|_{s=0}J_\gamma(\psi_\infty + s\varphi)
		= 0,
\end{align*}
that is,
\begin{align*}
	\Bigl\<- \frac{1 - \omega^2}{(d+2)r_{\omega,\gamma}^{d,2}}\psi_\infty + \Delta_\gamma \psi_\infty + |\psi_\infty|^\frac{4}{d}\psi_\infty,\varphi\Bigr\>_{H^{-1},H^1}
		= 0.
\end{align*}
We set
\begin{align*}
	\psi_\infty(x)
		= \nu^\frac{d}{2} Q_{\omega,\gamma}(\nu x),\ \ \ \nu^2 = \frac{1}{(d+2)r_{\omega,\gamma}^{d,2}}.
\end{align*}
Then, we obtain $S_{\omega,\gamma}'(Q_{\omega,\gamma}) = 0$, $K_{\omega,\gamma}^{d,2}(Q_{\omega,\gamma}) = 0$, and
\begin{gather*}
	S_{\omega,\gamma}(Q_{\omega,\gamma})
		= T_{\omega,\gamma}^{d,2}(Q_{\omega,\gamma})
		= \frac{1 - \omega^2}{2}\|Q_{\omega,\gamma}\|_{L^2}^2
		= \frac{1 - \omega^2}{2}\|\psi_\infty\|_{L^2}^2
		= r_{\omega,\gamma}^{d,2}.
\end{gather*}
We take any function $\psi \in \mathcal{A}_{\omega,\gamma,\,\text{rad}}$.
Then, $K_{\omega,\gamma}^{d,2}(\psi) = \<S_{\omega,\gamma}'(\psi), \mathcal{D}^{d,2}\psi\> = 0$ and hence, $S_{\omega,\gamma}(Q_{\omega,\gamma}) = r_{\omega,\gamma}^{d,2} \leq S_{\omega,\gamma}(\psi)$, that is, $Q_{\omega,\gamma} \in \mathcal{G}_{\omega,\gamma,\text{rad}}$.
Therefore, $\mathcal{G}_{\omega,\gamma,\text{rad}}$ is not empty.
We can also see $\mathcal{G}_{\omega,\gamma,\text{rad}} \subset \mathcal{M}_{\omega,\gamma,\text{rad}}^{d,2}$.
\end{proof}

\section{Proof of stability}\label{Proof of stability}

In this section, we prove the stability result in Theorem \ref{Instability versus stability}.
The proof is based on the argument in \cite{Sha83}.
Throughout this section, we assume local well-posedness of \eqref{NLKG} with \eqref{IC} for $1 + \frac{2}{d-2} < p < 1 + \frac{4}{d+1}$.

\begin{lemma}\label{convergence to minimizer}
If $\phi_n \in H_\text{rad}^1(\mathbb{R}^d)$ satisfies
\begin{align*}
	\frac{1}{d}\|(-\Delta_\gamma)^\frac{1}{2}\phi_n\|_{L^2}^2
		\longrightarrow r_{\omega,\gamma}^{0,-1}\ \ \text{ and }\ \ 
	S_{\omega,\gamma}(\phi_n)
		\longrightarrow r_0
		\leq r_{\omega,\gamma}^{0,-1}
\end{align*}
as $n \rightarrow \infty$ for some $r_0 \in \mathbb{R}$, then $r_0 = r_{\omega,\gamma}^{0,-1}$ and $\phi_n \longrightarrow \phi_\omega$ in $H^1(\mathbb{R}^d)$ for some minimizer $\phi_\omega$ to $r_{\omega,\gamma}^{0,-1}$.
\end{lemma}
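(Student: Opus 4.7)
The plan is to exploit the decomposition $S_{\omega,\gamma}(f) = T_{\omega,\gamma}^{0,-1}(f) + \tfrac{1}{d} K_{\omega,\gamma}^{0,-1}(f)$ (since $\beta = -1 < 0$ gives $\overline{\mu} = 2\alpha - d\beta = d$), which immediately yields $K_{\omega,\gamma}^{0,-1}(\phi_n) \to d(r_0 - r_{\omega,\gamma}^{0,-1}) \leq 0$. The first hypothesis bounds $\|(-\Delta_\gamma)^{\frac{1}{2}}\phi_n\|_{L^2}$; to bound $\|\phi_n\|_{L^2}$ as well, I would insert the Gagliardo--Nirenberg estimate (Lemma \ref{Gagliardo-Nirenberg inequality with the potential}) into the asymptotic inequality $K_{\omega,\gamma}^{0,-1}(\phi_n) \leq o(1)$. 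The subcriticality $p < 1 + \frac{4}{d}$ makes the exponent on $\|\phi_n\|_{L^2}^2$ arising from $\|\phi_n\|_{L^{p+1}}^{p+1}$ strictly less than $1$, so the coercive term $\frac{d(1-\omega^2)}{2}\|\phi_n\|_{L^2}^2$ dominates and forces $\{\phi_n\}$ to be bounded in $H_\gamma^1(\mathbb{R}^d)$.

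Next, since $\dot{H}_\gamma^1$ is a Hilbert space and $H_{\mathrm{rad}}^1(\mathbb{R}^d) \hookrightarrow L^{p+1}(\mathbb{R}^d)$ is compact, I would extract a subsequence (not relabelled) with $\phi_n \rightharpoonup \phi_\omega$ weakly in $H_\gamma^1$ and $\phi_n \to \phi_\omega$ strongly in $L^{p+1}$. Weak lower semi-continuity then gives $T_{\omega,\gamma}^{0,-1}(\phi_\omega) \leq r_{\omega,\gamma}^{0,-1}$ and $K_{\omega,\gamma}^{0,-1}(\phi_\omega) \leq 0$. I would rule out $\phi_\omega = 0$ by noting that vanishing forces $\|\phi_n\|_{L^{p+1}} \to 0$; then the asymptotic non-positivity of $K_{\omega,\gamma}^{0,-1}(\phi_n)$ drives both $\|\phi_n\|_{L^2}$ and $\|(-\Delta_\gamma)^{\frac{1}{2}}\phi_n\|_{L^2}$ to zero, contradicting $T_{\omega,\gamma}^{0,-1}(\phi_n) \to r_{\omega,\gamma}^{0,-1} > 0$ (positivity is implicit in Proposition \ref{Minimizer to r^{0,-1}}, since any minimizer must be nonzero in $\dot{H}_\gamma^1$).

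With $\phi_\omega \neq 0$ and $K_{\omega,\gamma}^{0,-1}(\phi_\omega) \leq 0$, Lemma \ref{Rewriting infimum} yields $T_{\omega,\gamma}^{0,-1}(\phi_\omega) \geq r_{\omega,\gamma}^{0,-1}$, forcing equality; and if $K_{\omega,\gamma}^{0,-1}(\phi_\omega) < 0$ were to hold, Lemma \ref{Estimates for the ground state}(1) would provide the strict inequality $T_{\omega,\gamma}^{0,-1}(\phi_\omega) > r_{\omega,\gamma}^{0,-1}$, a contradiction. Hence $K_{\omega,\gamma}^{0,-1}(\phi_\omega) = 0$ and $\phi_\omega$ realizes $r_{\omega,\gamma}^{0,-1}$, so that $S_{\omega,\gamma}(\phi_\omega) = r_{\omega,\gamma}^{0,-1}$. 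The sandwich $r_{\omega,\gamma}^{0,-1} = S_{\omega,\gamma}(\phi_\omega) \leq \liminf_{n} S_{\omega,\gamma}(\phi_n) = r_0 \leq r_{\omega,\gamma}^{0,-1}$ then pins down $r_0 = r_{\omega,\gamma}^{0,-1}$.

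To upgrade weak to strong convergence, I would use that $T_{\omega,\gamma}^{0,-1}(\phi_n) \to T_{\omega,\gamma}^{0,-1}(\phi_\omega)$ amounts to $\|(-\Delta_\gamma)^{\frac{1}{2}}\phi_n\|_{L^2} \to \|(-\Delta_\gamma)^{\frac{1}{2}}\phi_\omega\|_{L^2}$, so weak convergence in the Hilbert space $\dot{H}_\gamma^1$ upgrades to strong convergence there, and thence to $\dot{H}^1$ via Lemma \ref{Equivalence of Sobolev norm} (or Hardy). Finally, $S_{\omega,\gamma}(\phi_n) \to S_{\omega,\gamma}(\phi_\omega)$ together with the already-established convergence of the $\dot{H}_\gamma^1$- and $L^{p+1}$-pieces forces $\|\phi_n\|_{L^2} \to \|\phi_\omega\|_{L^2}$; combined with weak $L^2$-convergence, this delivers strong convergence in $L^2$, hence $\phi_n \to \phi_\omega$ in $H^1(\mathbb{R}^d)$. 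The most delicate step is the initial uniform $L^2$-bound: the hypotheses carry no direct $L^2$ information, so the mass-subcritical relation $p < 1 + \frac{4}{d}$ is indispensable there; the remaining arguments are a standard extraction-and-identification of weak limits enabled by the radial compact embedding.
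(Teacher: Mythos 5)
Your proposal is correct and follows essentially the same route as the paper's proof: a uniform bound from the asymptotic non-positivity of $K_{\omega,\gamma}^{0,-1}(\phi_n)$, the compact radial embedding plus weak lower semi-continuity to produce a nonzero weak limit attaining $r_{\omega,\gamma}^{0,-1}$, and then norm convergence in the Hilbert spaces $\dot{H}_\gamma^1$ and $L^2$ to upgrade weak to strong $H^1$ convergence. The only cosmetic differences are that the paper obtains the $L^2$ bound via Young's inequality and the Sobolev embedding rather than Gagliardo--Nirenberg (and, incidentally, it is $d\geq 3$ rather than $p<1+\tfrac{4}{d}$ that makes your Gagliardo--Nirenberg exponent subquadratic), and it identifies the limit by rescaling with $\lambda\in(0,1]$ rather than by invoking Lemma \ref{Estimates for the ground state}(1).
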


\begin{proof}
$\{\phi_n\} \subset H_\text{rad}^1(\mathbb{R}^d)$ is a bounded sequence by the same argument with proof of Proposition \ref{Minimizer to r^{0,-1}}.
So, there exists $\phi_\infty \in H_\text{rad}^1(\mathbb{R}^d)$ such that
\begin{align*}
	\phi_n
		\xrightharpoonup[]{\hspace{0.4cm}} \phi_\infty\ \text{ in }\ H_\gamma^1(\mathbb{R}^d),\ \ \ 
	\phi_n
		\longrightarrow \phi_\infty\ \text{ in }\ L^{p+1}(\mathbb{R}^d)
\end{align*}
as $n \rightarrow \infty$ and
\begin{gather*}
	\|\phi_\infty\|_{L^2}
		\leq \liminf_{n \rightarrow \infty}\|\phi_n\|_{L^2},\\
	\|(-\Delta_\gamma)^\frac{1}{2}\phi_\infty\|_{L^2}
		\leq \liminf_{n \rightarrow \infty}\|(-\Delta_\gamma)^\frac{1}{2}\phi_n\|_{L^2}
		= d \cdot r_{\omega,\gamma}^{0,-1},\\
	\|\phi_\infty\|_{L^{p+1}}
		= \lim_{n \rightarrow \infty}\|\phi_n\|_{L^{p+1}}.
\end{gather*}
From
\begin{align*}
	K_{\omega,\gamma}^{0,-1}(\phi_\infty)
		& \leq \liminf_{n \rightarrow \infty}K_{\omega,\gamma}^{0,-1}(\phi_n) \\
		& = d \cdot \liminf_{n \rightarrow \infty}S_{\omega,\gamma}(\phi_n) - \liminf_{n \rightarrow \infty}\|(-\Delta_\gamma)^\frac{1}{2}\phi_n\|_{L^2}^2
		= d(r_0 - r_{\omega,\gamma}^{0,-1})
		\leq 0,
\end{align*}
there exists $\lambda \in (0,1]$ such that $K_{\omega,\gamma}^{0,-1}(\lambda\phi_\infty) = 0$.
For such $\lambda \in (0,1]$, we have
\begin{align*}
	r_{\omega,\gamma}^{0,-1}
		\leq S_{\omega,\gamma}(\lambda\phi_\infty)
		= \frac{1}{d}\|(-\Delta_\gamma)^\frac{1}{2}(\lambda\phi_\infty)\|_{L^2}^2
		\leq \frac{1}{d}\|(-\Delta_\gamma)^\frac{1}{2}\phi_\infty\|_{L^2}^2
		\leq r_{\omega,\gamma}^{0,-1}
\end{align*}
and hence, $\lambda$ must be $1$.
That is, we obtain $S_{\omega,\gamma}(\phi_\infty) = r_{\omega,\gamma}^{0,-1}$ and $K_{\omega,\gamma}^{0,-1}(\phi_\infty) = 0$.
We can also see $\phi_n \longrightarrow \phi_\infty$ in $H^1(\mathbb{R}^d)$.
Indeed, we have already seen $\|(-\Delta_\gamma)^\frac{1}{2}\phi_n\|_{L^2} \longrightarrow \|(-\Delta_\gamma)^\frac{1}{2}\phi_\infty\|_{L^2}$ as $n \rightarrow \infty$ and if $\|\phi_\infty\|_{L^2} < \liminf_{n \rightarrow \infty}\|\phi_n\|_{L^2}$, then
\begin{align*}
	r_{\omega,\gamma}^{0,-1}
		= S_{\omega,\gamma}(\phi_\infty)
		< \liminf_{n \rightarrow \infty}S_{\omega,\gamma}(\phi_n)
		= r_0
		\leq r_{\omega,\gamma}^{0,-1}.
\end{align*}.
\end{proof}

From now on, we consider only $\omega > 0$.
We can get the stability result for the case of $\omega < 0$ by the same manner.

\begin{lemma}\label{Strict convexity}
Let $d \geq 3$, $1 < p < 1 + \frac{4}{d}$, $\gamma > 0$, and $\omega_c < \omega < 1$.
Then, $r_{\omega,\gamma}^{0,-1}$ is strictly decreasing and strictly convex for $\omega$.
\end{lemma}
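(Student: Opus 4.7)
The plan is to reduce the claim to elementary one-variable calculus by obtaining a closed-form expression for $r(\omega) := r_{\omega,\gamma}^{0,-1}$ through a scaling argument. The key observation is that $-\Delta$ and $\gamma|x|^{-2}$ share the same scaling dimension, so that
\[
\phi(x) = (1-\omega^2)^{\frac{1}{p-1}}\,\widetilde\phi\bigl((1-\omega^2)^{\frac{1}{2}} x\bigr)
\]
defines a bijection on $H_\text{rad}^1(\mathbb{R}^d) \setminus \{0\}$. A direct change of variables, starting from
\[
\|(-\Delta_\gamma)^{\frac{1}{2}}\phi\|_{L^2}^2 = \|\nabla \phi\|_{L^2}^2 + \int_{\mathbb{R}^d} \frac{\gamma}{|x|^2}|\phi(x)|^2\,dx
\]
and exploiting the common homogeneity of both summands, shows that with $\tau := \frac{2}{p-1} - \frac{d-2}{2}$ one has
\[
S_{\omega,\gamma}(\phi) = (1-\omega^2)^{\tau}\, S_{0,\gamma}(\widetilde\phi), \qquad K_{\omega,\gamma}^{0,-1}(\phi) = (1-\omega^2)^{\tau}\, K_{0,\gamma}^{0,-1}(\widetilde\phi).
\]

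Because $(1-\omega^2)^{\tau} > 0$, this bijection maps $\{K_{\omega,\gamma}^{0,-1}=0\}$ onto $\{K_{0,\gamma}^{0,-1}=0\}$, and taking infima yields the closed form $r(\omega) = C\,(1-\omega^2)^{\tau}$ with $C := r_{0,\gamma}^{0,-1}$. Proposition~\ref{Minimizer to r^{0,-1}} produces a minimizer $\widetilde\phi_\infty \in H_\text{rad}^1(\mathbb{R}^d)\setminus\{0\}$, from which $C = \tfrac{1}{d}\|(-\Delta_\gamma)^{1/2}\widetilde\phi_\infty\|_{L^2}^2 > 0$. Differentiation then gives
\[
r'(\omega) = -2C\tau\,\omega\,(1-\omega^2)^{\tau-1}, \quad r''(\omega) = 2C\tau\,(1-\omega^2)^{\tau-2}\bigl[(2\tau-1)\omega^2 - 1\bigr].
\]

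From $1 < p < 1 + \frac{4}{d}$ one obtains $\tau > 1 > 0$, so $r'(\omega) < 0$ on $(0,1)$, which is strict monotonicity. For convexity, the identity
\[
2\tau - 1 = \frac{4}{p-1} - (d-1) = \frac{4-(d-1)(p-1)}{p-1}, \qquad \frac{1}{2\tau-1} = \omega_c^2
\]
shows that $r''(\omega) > 0$ precisely when $\omega^2 > \omega_c^2$, which is exactly the hypothesis $\omega_c < \omega < 1$. I do not foresee a serious obstacle; the only step deserving care is the change-of-variables identity for the singular-potential term, which is immediate thanks to its homogeneity.
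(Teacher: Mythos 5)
Your proof is correct and follows essentially the same route as the paper: scale out the $\omega$-dependence via $\phi(x)=(1-\omega^2)^{1/(p-1)}\widetilde\phi((1-\omega^2)^{1/2}x)$ to obtain the closed form $r_{\omega,\gamma}^{0,-1}=(1-\omega^2)^{\tau}\,r_{0,\gamma}^{0,-1}$, then differentiate twice. The paper writes this by pulling the scaling through a fixed minimizer $\phi_\omega$ rather than phrasing it as a bijection between the constraint sets, but the computation and the exponent $\tau=\tfrac{p+1}{p-1}-\tfrac{d}{2}$ are identical.
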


\begin{proof}
We take a $\phi_\omega \in \mathcal{M}_{\omega,\gamma,\text{rad}}^{0,-1}$.
From $\phi_\omega = (1 - \omega^2)^\frac{1}{p-1}\phi_0((1 - \omega^2)^\frac{1}{2}\,\cdot\,)$, we have
\begin{align*}
	r_{\omega,\gamma}^{0,-1}
		& = S_{\omega,\gamma}(\phi_\omega)
		= (1 - \omega^2)^{\frac{p+1}{p-1} - \frac{d}{2}}S_{0,\gamma}(\phi_0), \\
	\frac{d}{d\omega}r_{\omega,\gamma}^{0,-1}
		& = - \frac{d+2 - (d-2)p}{p-1}\omega(1 - \omega^2)^{\frac{2}{p-1} - \frac{d}{2}}S_{0,\gamma}(\phi_0)
		< 0, \\
	\frac{d^2}{d\omega^2}r_{\omega,\gamma}^{0,-1}
		& = \frac{d+2 - (d-2)p}{(p-1)^2}\{4 - (d-1)(p-1)\}(\omega^2 - \omega_c^2)(1 - \omega^2)^{\frac{2}{p-1} - \frac{d}{2} - 1}S_{0,\gamma}(\phi_0)
		> 0.
\end{align*}
\end{proof}

\begin{lemma}\label{Near the norm}
Let $d \geq 3$, $1 < p < 1 + \frac{4}{d}$, $\gamma > 0$, and $\omega_c < \omega_0 < 1$.
Let $(u_0,u_1) \in H_\text{rad}^1(\mathbb{R}^d) \times L_\text{rad}^2(\mathbb{R}^d)$.
Then, for any $\varepsilon > 0$, there exists $\delta = \delta(\varepsilon) > 0$ such that if
\begin{align*}
	\inf_{Q_{\omega_0,\gamma} \in \mathcal{G}_{\omega_0,\gamma,\text{rad}}}\|(u_0,u_1) - (Q_{\omega_0,\gamma},i\omega Q_{\omega_0,\gamma})\|_{H^1 \times L^2}
		< \delta(\varepsilon),
\end{align*}
then the solution $u$ to \eqref{NLKG} with \eqref{IC} satisfies $(u(t),\partial_tu(t)) \in \mathcal{R}_{\omega_0-\varepsilon,+}^{0,-1} \cap \mathcal{R}_{\omega_0+\varepsilon,-}^{0,-1}$ for any $t \in [0,T_\text{max})$.
\end{lemma}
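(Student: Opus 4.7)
The plan is to exploit three ingredients: the strict convexity of $\omega\mapsto r_{\omega,\gamma}^{0,-1}$ (Lemma \ref{Strict convexity}), the conservation of $L_{\omega,\gamma}=E_\gamma+\omega C$ along the flow of \eqref{NLKG}, and the flow-invariance of $\mathcal{R}_{\omega,\pm}^{0,-1}$ (Lemma \ref{Estimate of virial}). Throughout, I use $\mathcal{G}_{\omega_0,\gamma,\text{rad}}=\mathcal{M}_{\omega_0,\gamma,\text{rad}}^{0,-1}$ (Proposition \ref{M=G}).

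For a fixed $Q_{\omega_0,\gamma}\in\mathcal{G}_{\omega_0,\gamma,\text{rad}}$, using $S_{\omega_0,\gamma}(Q_{\omega_0,\gamma})=r_{\omega_0,\gamma}^{0,-1}$ and $K_{\omega_0,\gamma}^{0,-1}(Q_{\omega_0,\gamma})=0$, direct algebra gives
\begin{align*}
L_{\omega,\gamma}(Q_{\omega_0,\gamma},i\omega_0 Q_{\omega_0,\gamma})
  &= r_{\omega_0,\gamma}^{0,-1}+\omega_0(\omega_0-\omega)\|Q_{\omega_0,\gamma}\|_{L^2}^2, \\
K_{\omega,\gamma}^{0,-1}(Q_{\omega_0,\gamma})
  &= \tfrac{d}{2}(\omega_0^2-\omega^2)\|Q_{\omega_0,\gamma}\|_{L^2}^2.
\end{align*}
Set $F(\omega):=L_{\omega,\gamma}(Q_{\omega_0,\gamma},i\omega_0 Q_{\omega_0,\gamma})-r_{\omega,\gamma}^{0,-1}$. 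I would verify the envelope identity $\tfrac{d}{d\omega}r_{\omega,\gamma}^{0,-1}=-\omega\|Q_{\omega,\gamma}\|_{L^2}^2$ directly from the scaling $Q_{\omega,\gamma}(x)=(1-\omega^2)^{1/(p-1)}Q_{0,\gamma}((1-\omega^2)^{1/2}x)$ used in Lemma \ref{Strict convexity}. This yields $F(\omega_0)=F'(\omega_0)=0$, and $F''(\omega_0)<0$ by the strict convexity of $r_{\omega,\gamma}^{0,-1}$. Hence $F(\omega_0\pm\varepsilon)<0$ for every sufficiently small $\varepsilon>0$, which combined with the explicit sign of $K_{\omega_0\pm\varepsilon,\gamma}^{0,-1}(Q_{\omega_0,\gamma})$ shows that $(Q_{\omega_0,\gamma},i\omega_0 Q_{\omega_0,\gamma})\in\mathcal{R}_{\omega_0-\varepsilon,+}^{0,-1}\cap\mathcal{R}_{\omega_0+\varepsilon,-}^{0,-1}$.

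To transfer this to $(u_0,u_1)$, I would invoke continuity: the maps $L_{\omega,\gamma}$ and $K_{\omega,\gamma}^{0,-1}$ are continuous on $H^1\times L^2$ and $H^1$ respectively (the $L^{p+1}$ term is controlled by subcritical Sobolev embedding). Furthermore, every $Q\in\mathcal{G}_{\omega_0,\gamma,\text{rad}}$ has identical values of $\|Q\|_{L^2}$, $\|(-\Delta_\gamma)^{1/2}Q\|_{L^2}$ and $\|Q\|_{L^{p+1}}$: the Pohozaev relations $K^{0,-1}_{\omega_0,\gamma}(Q)=K^{1,0}_{\omega_0,\gamma}(Q)=0$ together with $S_{\omega_0,\gamma}(Q)=r_{\omega_0,\gamma}^{0,-1}$ form a linear system that determines the three norms uniquely in terms of $r_{\omega_0,\gamma}^{0,-1}$. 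Thus the positive gaps $r_{\omega_0\pm\varepsilon,\gamma}^{0,-1}-L_{\omega_0\pm\varepsilon,\gamma}(Q,i\omega_0 Q)$ and $|K^{0,-1}_{\omega_0\pm\varepsilon,\gamma}(Q)|$ do not depend on the choice of ground state, so a uniform $\delta(\varepsilon)>0$ exists for which \eqref{127} forces $(u_0,u_1)\in\mathcal{R}_{\omega_0-\varepsilon,+}^{0,-1}\cap\mathcal{R}_{\omega_0+\varepsilon,-}^{0,-1}$.

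Finally, for $\varepsilon$ small enough that $\omega_c<\omega_0-\varepsilon<\omega_0+\varepsilon<1$ we have $r_{\omega_0\pm\varepsilon,\gamma}^{0,-1}>0$, so Lemma \ref{Estimate of virial} yields the flow-invariance and the stated conclusion on $[0,T_{\text{max}})$. The main subtlety I anticipate is the envelope identity for $\tfrac{d}{d\omega}r_{\omega,\gamma}^{0,-1}$: rather than appealing to a general implicit-function argument (since $\mathcal{G}_{\omega_0,\gamma,\text{rad}}$ is only known as a set and need not be a smooth branch in $\omega$), I will derive it by differentiating the explicit representation $r_{\omega,\gamma}^{0,-1}=(1-\omega^2)^{(p+1)/(p-1)-d/2}S_{0,\gamma}(Q_{0,\gamma})$ that underlies Lemma \ref{Strict convexity} and checking consistency with the closed-form expression for $\|Q_{\omega,\gamma}\|_{L^2}^2$ obtained above.
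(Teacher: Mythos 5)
Your proposal is correct and follows essentially the same route as the paper: both arguments hinge on the strict convexity of $\omega\mapsto r_{\omega,\gamma}^{0,-1}$, the envelope identity $\left.\frac{d}{d\omega}\right|_{\omega=\omega_0}r_{\omega,\gamma}^{0,-1}=C(Q_{\omega_0,\gamma},i\omega_0Q_{\omega_0,\gamma})=-\omega_0\|Q_{\omega_0,\gamma}\|_{L^2}^2$ to get $L_{\omega_0\pm\varepsilon,\gamma}(Q_{\omega_0,\gamma},i\omega_0Q_{\omega_0,\gamma})<r_{\omega_0\pm\varepsilon,\gamma}^{0,-1}$, a continuity/uniformity argument to pass to nearby data, and Lemma \ref{Estimate of virial} for flow invariance. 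The only (harmless) difference is that you verify membership in $\mathcal{R}_{\omega_0\mp\varepsilon,\pm}^{0,-1}$ via the explicit sign of $K_{\omega_0\pm\varepsilon,\gamma}^{0,-1}(Q_{\omega_0,\gamma})=\frac{d}{2}(\omega_0^2-(\omega_0\pm\varepsilon)^2)\|Q_{\omega_0,\gamma}\|_{L^2}^2$, whereas the paper uses the $\omega$-independent quantity $T_{\omega,\gamma}^{0,-1}(u_0)=\frac{1}{d}\|(-\Delta_\gamma)^{1/2}u_0\|_{L^2}^2$ together with the equivalent characterization of $\mathcal{R}_{\omega,\pm}^{0,-1}$ from Lemma \ref{Invariant set} and the strict monotonicity of $r_{\omega,\gamma}^{0,-1}$.
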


\begin{proof}
Fix $\varepsilon > 0$.
Since $r_{\omega_0 + \varepsilon,\gamma}^{0,-1} < r_{\omega_0,\gamma}^{0,-1} = \frac{1}{d}\|(-\Delta_\gamma)^\frac{1}{2}Q_{\omega_0,\gamma}\|_{L^2}^2 < r_{\omega_0 - \varepsilon,\gamma}^{0,-1}$ holds for any $Q_{\omega_0,\gamma} \in \mathcal{G}_{\omega_0,\gamma,\text{rad}}$, there exists $\delta_1 > 0$ such that if $\delta_1 > 0$ satisfies
\begin{align*}
	\inf_{Q_{\omega_0,\gamma} \in \mathcal{G}_{\omega_0,\gamma,\text{rad}}}\|(u_0,u_1) - (Q_{\omega_0,\gamma},i\omega_0Q_{\omega_0,\gamma})\|_{H^1 \times L^2}
		< \delta_1,
\end{align*}
then
\begin{align*}
	r_{\omega_0 + \varepsilon,\gamma}^{0,-1}
		< \frac{1}{d}\|(-\Delta_\gamma)^\frac{1}{2}u_0\|_{L^2}^2
		< r_{\omega_0 - \varepsilon,\gamma}^{0,-1}.
\end{align*}
\eqref{112} and Lemma \ref{Strict convexity} deduce that
\begin{align*}
	L_{\omega_0 \pm \varepsilon,\gamma}(Q_{\omega_0,\gamma},i\omega_0Q_{\omega_0,\gamma})
		& = L_{\omega_0,\gamma}(Q_{\omega_0,\gamma},i\omega_0Q_{\omega_0,\gamma}) \pm \varepsilon C(Q_{\omega_0,\gamma},i\omega_0Q_{\omega_0,\gamma}) \\
		& = r_{\omega_0,\gamma}^{0,-1} \pm \varepsilon\left.\frac{d}{d\omega}\right|_{\omega = \omega_0}r_{\omega,\gamma}^{0,-1}
		< r_{\omega_0 \pm \varepsilon,\gamma}^{0,-1}
\end{align*}
for any $Q_{\omega_0,\gamma} \in \mathcal{G}_{\omega_0,\gamma,\text{rad}}$.
Thus, there exists $\delta_2 > 0$ such that if
\begin{align*}
	\inf_{Q_{\omega_0,\gamma} \in \mathcal{G}_{\omega_0,\gamma,\text{rad}}}\|(u_0,u_1) - (Q_{\omega_0,\gamma},i\omega_0Q_{\omega_0,\gamma})\|_{H^1 \times L^2}
		< \delta_2,
\end{align*}
then
\begin{align*}
	L_{\omega_0 \pm \varepsilon,\gamma}(u_0,u_1)
		< r_{\omega_0 \pm \varepsilon,\gamma}^{0,-1}.
\end{align*}
Therefore, if we set $\delta = \min\{\delta_1,\delta_2\}$ and
\begin{align*}
	\inf_{Q_{\omega_0,\gamma} \in \mathcal{G}_{\omega_0,\gamma,\text{rad}}}\|(u_0,u_1) - (Q_{\omega_0,\gamma},i\omega_0Q_{\omega_0,\gamma})\|_{H^1 \times L^2}
		< \delta,
\end{align*}
then $(u_0,u_1) \in \mathcal{R}_{\omega_0 - \varepsilon,+}^{0,-1} \cap \mathcal{R}_{\omega_0 + \varepsilon,-}^{0,-1}$ from Lemma \ref{Invariant set}.
Lemma \ref{Estimate of virial} implies the desired result.
\end{proof}

\begin{proof}[Proof of stability in Theorem \ref{Instability versus stability}]
We prove the theorem by contradiction.
That is, we assume that there exist a positive constant $\varepsilon_0 > 0$, an initial data sequence $\{(u_{0,n},u_{1,n})\} \subset H_\text{rad}^1(\mathbb{R}^d) \times L_\text{rad}^2(\mathbb{R}^d)$, parameters $\{\varepsilon_n\} \subset (0,1)$, and a time sequence $\{t_n\} \subset [0,\infty)$ such that $\varepsilon_n \longrightarrow 0$ as $n \rightarrow \infty$, $\omega_c < |\omega_0 \pm \varepsilon_n| < 1$ for each $n \in \mathbb{N}$,
\begin{align*}
	\inf_{Q_{\omega_0,\gamma} \in \mathcal{G}_{\omega_0,\gamma,\text{rad}}}\|(u_{0,n},u_{1,n}) - (Q_{\omega_0,\gamma},i\omega Q_{\omega_0,\gamma})\|_{H^1 \times L^2}
		< \delta(\varepsilon_n),
\end{align*}
and
\begin{align}
	\inf_{Q_{\omega_0,\gamma} \in \mathcal{G}_{\omega_0,\gamma,\text{rad}}}\|(u_n(t_n),\partial_tu_n(t_n)) - (Q_{\omega_0,\gamma},i\omega Q_{\omega_0,\gamma})\|_{H^1 \times L^2}
		\geq \varepsilon_0, \label{140}
\end{align}
where $\delta$ is given in Lemma \ref{Near the norm} and $(u_n,\partial_tu_n)$ is a solution to \eqref{NLKG} with initial data $(u_{0,n},u_{1,n})$.
From Lemma \ref{Near the norm}, we have $r_{\omega_0 + \varepsilon_n,\gamma}^{0,-1} \leq \frac{1}{d}\|(-\Delta_\gamma)^\frac{1}{2}u_n(t_n)\|_{L^2}^2 \leq r_{\omega_0 - \varepsilon_n,\gamma}^{0,-1}$.
By the continuity of $r_{\omega,\gamma}^{0,-1}$,
\begin{align*}
	\lim_{n \rightarrow \infty}\frac{1}{d}\|(-\Delta_\gamma)^\frac{1}{2}u_n(t_n)\|_{L^2}^2
		= r_{\omega_0,\gamma}^{0,-1}
\end{align*}
holds.
Applying the same argument with the proof of Proposition \ref{Minimizer to r^{0,-1}}, we have boundedness of $\|u_n(t_n)\|_{L^2}^2$, where we note that $(u_n(t_n),\partial_tu_n(t_n)) \in \mathcal{R}_{\omega+\varepsilon_n,-}^{0,-1}$ from Lemma \ref{Invariant set}.
\begin{align*}
	S_{\omega_0 \pm \varepsilon_n,\gamma}(u_n(t_n))
		\leq L_{\omega_0 \pm \varepsilon_n,\gamma}(u_n(t_n),\partial_tu_n(t_n))
		< r_{\omega_0 \pm \varepsilon_n,\gamma}^{0,-1}
\end{align*}
implies that
\begin{align*}
	S_{\omega_0,\gamma}(u_n(t_n))
		= S_{\omega_0 \pm \varepsilon_n,\gamma}(u_n(t_n)) + \frac{\pm 2\varepsilon_n\omega_0 + \varepsilon_n^2}{2}\|u_n(t_n)\|_{L^2}^2
		\longrightarrow r_0
		\leq r_{\omega_0,\gamma}^{0,-1}
\end{align*}
as $n \rightarrow \infty$.
It follows from Lemma \ref{convergence to minimizer} that $r_0 = r_{\omega,\gamma}^{0,-1}$ and $u_n(t_n) \longrightarrow \phi_{\omega_0}$ in $H^1(\mathbb{R}^d)$ as $n \rightarrow \infty$ for some $\phi_{\omega_0} \in \mathcal{G}_{\omega_0,\gamma,\text{rad}}$.
\begin{align*}
	\frac{1}{2}\|\partial_tu_n(t_n) - i(\omega_0 \pm \varepsilon_n)u_n(t_n)\|_{L_x^2}^2
		& = L_{\omega_0 \pm \varepsilon_n,\gamma}(u_n(t_n),\partial_tu_n(t_n)) - S_{\omega_0 \pm \varepsilon_n,\gamma}(u_n(t_n)) \\
		& < r_{\omega_0 \pm \varepsilon_n,\gamma}^{0,-1} - S_{\omega_0 \pm \varepsilon_n,\gamma}(u_n(t_n))
\end{align*}
deduces that
\begin{align*}
	\lim_{n \rightarrow \infty}\|\partial_tu_n(t_n) - i(\omega_0 \pm \varepsilon_n)u_n(t_n)\|_{L_x^2}^2
		\leq r_{\omega_0,\gamma}^{0,-1} - r_{\omega_0,\gamma}^{0,-1}
		= 0.
\end{align*}
On the other hand, we have
\begin{align*}
	\lim_{n \rightarrow \infty}\|\partial_tu_n(t_n) - i(\omega_0 \pm \varepsilon_n)u_n(t_n)\|_{L_x^2}
		= \lim_{n \rightarrow \infty}\|\partial_tu_n(t_n) - i\omega_0 \phi_{\omega_0}\|_{L_x^2}.
\end{align*}
Therefore, we obtain
\begin{align*}
	\lim_{n \rightarrow \infty}\|(u_n(t_n),\partial_tu_n(t_n)) - (\phi_{\omega_0},i\omega_0 \phi_{\omega_0})\|_{H^1 \times L^2}
		= 0.
\end{align*}
This contradicts \eqref{140}.
\end{proof}

\section{Proof of very strong instability}\label{Proof of very strong instability}

Throughout this section, we assume local well-posedness of \eqref{NLKG} with \eqref{IC} for $1 + \frac{2}{d-2} < p < 1 + \frac{4}{d+1}$.
In this section, we prove the very strong instability results in Theorem \ref{Instability versus stability}.
The proof is based on the argument in \cite{OhtTod07}.
If $(u,\partial_tu)$ is a solution to \eqref{NLKG}, then the following identities hold formally:
\begin{gather*}
	- \frac{d}{dt}\text{Re}\int_{\mathbb{R}^d}\{2x\cdot\nabla u(t,x) + d\cdot u(t,x)\}\overline{\partial_tu(t,x)}dx
		= K_{\omega,\gamma}^{d,2}(u(t)), \\
	- \frac{d}{dt}\text{Re}\int_{\mathbb{R}^d}\{2x\cdot\nabla u(t,x) + (d+q) \cdot u(t,x)\}\overline{\partial_tu(t,x)}dx
		= K(u(t)),
\end{gather*}
where $q = \frac{4}{p-1} - d$ and
\begin{gather*}
	K(u(t))
		:= K_{\omega,\gamma}^{d,2}(u(t)) + q \cdot K_1(u(t),\partial_tu(t)), \\
	K_1(u,v)
		:= \|u\|_{L^2}^2 + \|(-\Delta_\gamma)^\frac{1}{2}u\|_{L^2}^2 - \|u\|_{L^{p+1}}^{p+1} - \|v\|_{L^2}^2.
\end{gather*}
In the proof, we use the localized functionals of the functionals.
We take weight functions as \cite{OhtTod07}.
Let $\Phi \in C^2([0,\infty))$ be a nonnegative function satisfying
\begin{equation*}
\Phi(r) =
\begin{cases}
&\hspace{-0.4cm}\displaystyle{d \qquad \text{for} \quad 0 \leq r \leq 1,}\\
&\hspace{-0.4cm}\displaystyle{0 \qquad \text{for} \quad r \geq 2,}
\end{cases}
\qquad \Phi' \leq 0\ \text{ for }\ 1 \leq r \leq 2.
\end{equation*}
For $R > 0$, we define functions $\Phi_R$ and $\Psi_R$ as
\begin{align*}
	\Phi_R(r)
		:= \Phi\left(\frac{r}{R}\right),\ \ \ \ 
	\Psi_R(r)
		:= \frac{1}{r^{d-1}}\int_0^r s^{d-1}\Phi_R(s)ds.
\end{align*}

\begin{lemma}[Ohta--Todorova, \cite{OhtTod07}]\label{Properties of weighted functions}
For $R > 0$, the followings hold:
\begin{gather*}
	\Phi_R(r)
		= d,\ \ \ 
	\Psi_R(r)
		= r,\ \ \ 0 \leq r \leq R, \\
	\Psi_R'(r) + \frac{d-1}{r}\Psi_R(r)
		= \Phi_R(r),\ \ \ r \geq 0, \\
	|\Phi_R^{(k)}(r)|
		\lesssim \frac{1}{R^k},\ \ \ r \geq 0,\ k = 0, 1, 2, \\
	\Psi_R'(r)
		\leq 1,\ \ \ r \geq 0.
\end{gather*}
\end{lemma}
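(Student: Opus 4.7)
The plan is to verify the four claims in order, directly from the definitions
\[
\Phi_R(r) = \Phi(r/R), \qquad \Psi_R(r) = r^{-(d-1)}\int_0^r s^{d-1}\Phi_R(s)\,ds,
\]
and the piecewise properties of $\Phi$. Properties (i), (ii), (iii) are essentially bookkeeping: for (i), on $[0,R]$ the argument $r/R$ lies in $[0,1]$ so $\Phi_R \equiv d$, and then the integral in the definition of $\Psi_R$ evaluates explicitly to $r$. For (ii), multiplying by $r^{d-1}$ turns the identity into $r^{d-1}\Psi_R(r) = \int_0^r s^{d-1}\Phi_R(s)\,ds$, and differentiating via the fundamental theorem of calculus and the product rule produces the stated first-order ODE for $\Psi_R$ (so $\Psi_R$ is nothing but the standard solution of the radial Poisson-type equation with source $\Phi_R$). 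For (iii), the chain rule gives $\Phi_R^{(k)}(r) = R^{-k}\Phi^{(k)}(r/R)$, and since $\Phi \in C^2([0,\infty))$ vanishes on $[2,\infty)$, each $\Phi^{(k)}$ is bounded on $[0,\infty)$, yielding the claimed $R^{-k}$ scaling.

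The only step that requires a brief argument is (iv), since $\Psi_R'\equiv 1$ holds only on $[0,R]$ and one needs a global bound. The key observation is that $\Phi_R$ is non-increasing on $[0,\infty)$: it equals $d$ on $[0,R]$, is non-increasing on $[R,2R]$ by the hypothesis $\Phi' \leq 0$ there, and vanishes beyond $2R$. Consequently $\Phi_R(s) \geq \Phi_R(r)$ for every $s \in [0,r]$, so the defining integral satisfies
\[
\Psi_R(r) \geq \frac{\Phi_R(r)}{r^{d-1}}\int_0^r s^{d-1}\,ds = \frac{r}{d}\Phi_R(r).
\]
Plugging this lower bound into the identity from (ii) gives
\[
\Psi_R'(r) = \Phi_R(r) - \frac{d-1}{r}\Psi_R(r) \leq \Phi_R(r) - \frac{d-1}{d}\Phi_R(r) = \frac{\Phi_R(r)}{d} \leq 1,
\]
where the last inequality uses $\Phi_R \leq d$.

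The main (and only) obstacle is getting $\Psi_R' \leq 1$ on the whole half-line rather than just on $[0,R]$, and as above the monotonicity of $\Phi_R$ delivers a lower bound on $\Psi_R$ that precisely cancels the leading order of $\Phi_R$ in the ODE from (ii). No further subtlety is involved.
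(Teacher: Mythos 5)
The paper cites this lemma from Ohta--Todorova \cite{OhtTod07} and does not reproduce a proof, so there is nothing in the text to compare against line by line. Your verification is nonetheless correct and complete: (i)--(iii) are direct from the definitions and the chain rule, and for (iv) the monotonicity of $\Phi_R$ (which follows from $\Phi'\leq 0$ on $[1,2]$ together with $\Phi$ being constant on $[0,1]$ and on $[2,\infty)$) yields the lower bound $\Psi_R(r)\geq r\Phi_R(r)/d$, which combined with the ODE from (ii) and $\Phi_R\leq d$ gives $\Psi_R'\leq 1$. This is the standard argument and is the one intended by the original reference.
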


\begin{lemma}\label{Estimate for virial functional}
Let $(u,\partial_t u) \in C_t(I;H_x^1(\mathbb{R}^d) \times L_x^2(\mathbb{R}^d))$ be a radially symmetric solution to \eqref{NLKG}.
Then, there exists $C_0 > 0$ such that
\begin{align*}
	- \frac{d}{dt}I_R^1(t)
		& \leq K_{\omega,\gamma}^{d,2}(u(t)) + \frac{d(p-1)}{p+1}\|u(t)\|_{L^{p+1}(|x| \geq R)}^{p+1} + \frac{C_0}{R^2}\|u(t)\|_{L_x^2}^2, \\
	- \frac{d}{dt}I_R^2(t)
		& \leq K(u(t),\partial_tu(t)) + \frac{d(p-1)}{p+1}\|u(t)\|_{L^{p+1}(|x| \geq R)}^{p+1} + \frac{C_0}{R^2}\|u(t)\|_{L_x^2}^2,
\end{align*}
where $I_R^1$ and $I_R^2$ are defined as
\begin{align*}
	I_R^1(t)
		& := 2\text{Re}\int_{\mathbb{R}^d}\Psi_R(x)\partial_ru(t,x)\overline{\partial_tu(t,x)}dx + \text{Re}\int_{\mathbb{R}^d}\Phi_R(x)u(t,x)\overline{\partial_tu(t,x)}dx, \\
	I_R^2(t)
		& := I_R^1(t) + \left(\frac{4}{p-1} - d\right)\text{Re}\int_{\mathbb{R}^d}u(t,x)\overline{\partial_tu(t,x)}dx
\end{align*}
for any $t \in [0,T_\text{max})$.
\end{lemma}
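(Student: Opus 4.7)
The plan is a direct virial-type computation, differentiating $I_R^1(t)$ in time, substituting the equation \eqref{NLKG} in every occurrence of $\partial_t^2u$, and performing radial integrations by parts governed by the key identity $\Psi_R'(r)+\tfrac{d-1}{r}\Psi_R(r)=\Phi_R(r)$ from Lemma \ref{Properties of weighted functions}. Concretely, after differentiation one is left with integrals of the form $2\,\mathrm{Re}\!\int \Psi_R\partial_r u\,\overline{Au}\,dx+\mathrm{Re}\!\int\Phi_R u\,\overline{Au}\,dx$ with $A\in\{\Delta_\gamma,\,-1,\,|u|^{p-1}\}$, together with kinetic cross terms $2\,\mathrm{Re}\!\int\Psi_R\partial_r(\partial_tu)\overline{\partial_tu}\,dx$ which, by integration by parts in $r$ and the identity above, exactly cancel the $\Phi_R|\partial_tu|^2$ contribution. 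The virial machinery then reduces the problem to analyzing three separate pieces, one for each operator $A$.

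For the nonlinearity, writing $2\,\Psi_R\mathrm{Re}(\partial_r u\,|u|^{p-1}\bar u)=\frac{2\Psi_R}{p+1}\partial_r(|u|^{p+1})$, integration by parts and Lemma \ref{Properties of weighted functions} give $-\frac{d}{p+1}\!\int\Phi_R|u|^{p+1}dx$; splitting $\Phi_R=d-(d-\Phi_R)$ with $d-\Phi_R\geq 0$ supported on $|x|\geq R$ yields the nonlinear part of $K_{\omega,\gamma}^{d,2}$ up to the error $\frac{d(p-1)}{p+1}\|u\|_{L^{p+1}(|x|\geq R)}^{p+1}$. For the mass term, an analogous calculation produces $-\!\int\Phi_R|u|^2dx$, contributing the mass part of $K_{\omega,\gamma}^{d,2}$ on $|x|\leq R$. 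For the Laplacian, writing $\Delta_\gamma=\partial_r^2+\frac{d-1}{r}\partial_r-\frac{\gamma}{r^2}$ in the radial frame and integrating by parts in $r$ twice, the dominant part is $2\!\int\Phi_R(|\partial_r u|^2+\frac{\gamma}{r^2}|u|^2)\,dx$ (again the gradient part of $K_{\omega,\gamma}^{d,2}$ on $|x|\leq R$), while the commutator with the cutoff leaves only lower-order terms proportional to $\Phi_R''\,|u|^2$ and $\Phi_R'\,|u|^2/r$. Because $|\Phi_R^{(k)}|\lesssim R^{-k}$ on an annulus of thickness $\sim R$, Lemma \ref{Properties of weighted functions} controls these by $\frac{C_0}{R^2}\|u\|_{L^2}^2$. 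Assembling the three contributions gives exactly $K_{\omega,\gamma}^{d,2}(u(t))$ plus the announced errors.

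For $I_R^2$ I only need the additional ODE identity
\begin{align*}
-\frac{d}{dt}\mathrm{Re}\!\int u(t,x)\overline{\partial_tu(t,x)}\,dx
&=-\|\partial_tu(t)\|_{L^2}^2-\mathrm{Re}\!\int u(t,x)\overline{\partial_t^2u(t,x)}\,dx\\
&=\|u(t)\|_{L^2}^2+\|(-\Delta_\gamma)^{1/2}u(t)\|_{L^2}^2-\|u(t)\|_{L^{p+1}}^{p+1}-\|\partial_tu(t)\|_{L^2}^2,
\end{align*}
which equals $K_1(u(t),\partial_tu(t))$. Multiplying by $q=\frac{4}{p-1}-d$, adding to the previous estimate, and recalling that $K=K_{\omega,\gamma}^{d,2}+q\,K_1$ by definition produces the second inequality with the same error terms.

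The principal obstacle is the handling of the inverse-square potential in the integrations by parts: the term $\frac{\gamma}{r^2}\Psi_R\partial_r u\cdot\bar u$ would naively generate a singular $|x|^{-3}$ contribution, but the algebraic identity $\partial_r(\Psi_R/r^2)+\frac{d-1}{r}\Psi_R/r^2=\Phi_R/r^2-2\Psi_R/r^3$ combined with the explicit behaviour $\Psi_R(r)=r$ on $|x|\leq R$ shows the singular piece is cancelled and that the outside contribution is bounded by $R^{-2}\|u\|_{L^2}^2$ via Hardy's inequality and $\Psi_R(r)/r\leq 1$. The formal computation is then justified by a standard density argument, approximating $(u_0,u_1)$ in $H_{\mathrm{rad}}^1\times L_{\mathrm{rad}}^2$ by data with greater regularity and passing to the limit using the continuous dependence from Theorem \ref{Local well-posedness}.
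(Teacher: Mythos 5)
Your proposal follows essentially the same route as the paper's proof: differentiate $I_R^1$, substitute the equation, and integrate by parts in the radial variable using $\Psi_R'+\tfrac{d-1}{r}\Psi_R=\Phi_R$, with the additional identity $-\tfrac{d}{dt}\mathrm{Re}\int u\overline{\partial_tu}\,dx=K_1(u,\partial_tu)$ handling $I_R^2$. Two small corrections to your bookkeeping: the surviving second-order weight is $2\Psi_R'|\nabla u|^2+\tfrac{2\gamma}{r^3}\Psi_R|u|^2$ (not $2\Phi_R(|\partial_ru|^2+\tfrac{\gamma}{r^2}|u|^2)$, which would carry a spurious factor $d$), and its excess over the corresponding part of $K_{\omega,\gamma}^{d,2}$ on $|x|\ge R$ is discarded by the sign conditions $\Psi_R'\le 1$, $\Psi_R(r)\le r$ and $\gamma>0$ from Lemma \ref{Properties of weighted functions} rather than absorbed into the $C_0R^{-2}\|u\|_{L^2}^2$ error, which only controls the $\Delta\Phi_R$ and potential-tail terms.
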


\begin{proof}
Multiplying \eqref{NLKG} by $\Psi_R\overline{\partial_ru}$ and $\Phi_R\overline{u}$ respectively, we have
\begin{gather}
	- \Psi_R\overline{\partial_ru}\partial_t^2u + \Psi_R\overline{\partial_ru}\Delta_\gamma u - u\Psi_R\overline{\partial_ru}
		= - |u|^{p-1}u\Psi_R\overline{\partial_ru}, \label{106} \\
	- \Phi_R\overline{u}\partial_t^2u + \Phi_R\overline{u}\Delta_\gamma u - \Phi_R|u|^2
		= - \Phi_R|u|^{p+1}. \label{107}
\end{gather}
Integrating \eqref{106} and \eqref{107} respectively, we have
\begin{align*}
	- 2\frac{d}{dt}\text{Re}\int_{\mathbb{R}^d}\Psi_R\partial_ru\overline{\partial_tu}dx
		& = - 2\text{Re}\int_{\mathbb{R}^d}\Psi_R\partial_r\partial_tu\overline{\partial_tu}dx - 2\text{Re}\int_{\mathbb{R}^d}\Psi_R\partial_ru\overline{\partial_t^2u}dx \\
		& \hspace{-1.5cm} = 2\text{Re}\int_{\mathbb{R}^d}\left(\Psi_R'|\partial_tu|^2 + \Psi_R\partial_tu\overline{\partial_r\partial_tu} + \frac{d-1}{r}\Psi_R|\partial_tu|^2\right)dx \\
		& - 2\text{Re}\int_{\mathbb{R}^d}(\Psi_R\overline{\partial_ru}\Delta_\gamma u - u\Psi_R\overline{\partial_ru} + \Psi_R|u|^{p-1}u\overline{\partial_ru})dx \\
		& \hspace{-1.5cm} = \int_{\mathbb{R}^d}\left(\Psi_R' + \frac{d-1}{r}\Psi_R\right)|\partial_tu|^2dx + \int_{\mathbb{R}^d}\left(\Psi_R' - \frac{d-1}{r}\Psi_R\right)|\nabla u|^2dx \\
		& - \int_{\mathbb{R}^d}\left(\Psi_R'\frac{\gamma}{|x|^2} + \Psi_R\frac{(d-3)\gamma}{|x|^3}\right)|u|^2dx - \int_{\mathbb{R}^d}\left(\Psi_R' + \frac{d-1}{r}\Psi_R\right)|u|^2dx \\
		& + \frac{2}{p+1}\int_{\mathbb{R}^d}\left(\Psi_R' + \frac{d-1}{r}\Psi_R\right)|u|^{p+1}dx
\end{align*}
and
\begin{align*}
	- \frac{d}{dt}\text{Re}\int_{\mathbb{R}^d}\Phi_Ru\overline{\partial_tu}dx
		& = - \int_{\mathbb{R}^d}\Phi_R|\partial_tu|^2dx - \text{Re}\int_{\mathbb{R}^d}\Phi_Ru\overline{\partial_t^2u}dx \\
		& = - \int_{\mathbb{R}^d}\Phi_R|\partial_tu|^2dx - \text{Re}\int_{\mathbb{R}^d}(\Phi_R\overline{u}\Delta_\gamma u - \Phi_R|u|^2 + \Phi_R|u|^{p+1})dx \\
		& = - \int_{\mathbb{R}^d}\Phi_R|\partial_tu|^2dx - \frac{1}{2}\int_{\mathbb{R}^d}\Delta\Phi_R|u|^2dx + \int_{\mathbb{R}^d}\Phi_R|\nabla u|^2dx \\
		& \hspace{1.5cm} + \int_{\mathbb{R}^d}\frac{\gamma}{|x|^2}\Phi_R|u|^2dx + \int_{\mathbb{R}^d}\Phi_R|u|^2dx - \int_{\mathbb{R}^d}\Phi_R|u|^{p+1}dx.
\end{align*}
From Lemma \ref{Properties of weighted functions}, we have
\begin{align*}
	- \frac{d}{dt}I_R^1(t)
		& = - 2\frac{d}{dt}\text{Re}\int_{\mathbb{R}^d}\Psi_R\partial_ru\overline{\partial_tu}dx - \frac{d}{dt}\text{Re}\int_{\mathbb{R}^d}\Phi_Ru\overline{\partial_tu}dx \\
		& = \int_{\mathbb{R}^d}\left(\Psi_R' + \frac{d-1}{r}\Psi_R\right)|\partial_tu|^2dx + \int_{\mathbb{R}^d}\left(\Psi_R' - \frac{d-1}{r}\Psi_R\right)|\nabla u|^2dx \\
		& \hspace{1.0cm} - \int_{\mathbb{R}^d}\left(\Psi_R'\frac{\gamma}{|x|^2} + \Psi_R\frac{(d-3)\gamma}{|x|^3}\right)|u|^2dx - \int_{\mathbb{R}^d}\left(\Psi_R' + \frac{d-1}{r}\Psi_R\right)|u|^2dx \\
		& \hspace{1.0cm} + \frac{2}{p+1}\int_{\mathbb{R}^d}\left(\Psi_R' + \frac{d-1}{r}\Psi_R\right)|u|^{p+1}dx - \int_{\mathbb{R}^d}\Phi_R|\partial_tu|^2dx - \frac{1}{2}\int_{\mathbb{R}^d}\Delta\Phi_R|u|^2dx \\
		& \hspace{1.0cm} + \int_{\mathbb{R}^d}\Phi_R|\nabla u|^2dx + \int_{\mathbb{R}^d}\frac{\gamma}{|x|^2}\Phi_R|u|^2dx + \int_{\mathbb{R}^d}\Phi_R|u|^2dx - \int_{\mathbb{R}^d}\Phi_R|u|^{p+1}dx \\
		& = 2\int_{\mathbb{R}^d}\Psi_R'|\nabla u|^2dx - \frac{1}{2}\int_{\mathbb{R}^d}\Delta\Phi_R|u|^2dx + \int_{\mathbb{R}^d}\frac{2\gamma}{|x|^3}\Psi_R|u|^2dx - \frac{p-1}{p+1}\int_{\mathbb{R}^d}\Phi_R|u|^{p+1}dx \\
		& = K_{\omega,\gamma}^{d,2}(u) + 2\int_{|x| \geq R}(\Psi_R'-1)|\nabla u|^2dx - \frac{1}{2}\int_{\mathbb{R}^d}\Delta\Phi_R|u|^2dx \\
		& \hspace{1.0cm} + \int_{\mathbb{R}^d}\left(\frac{2}{|x|}\Psi_R - 2\right)\frac{\gamma}{|x|^2}|u|^2dx - \frac{p-1}{p+1}\int_{|x| \geq R}(\Phi_R-d)|u|^{p+1}dx \\
		& \leq K_{\omega,\gamma}^{d,2}(u) + \frac{d(p-1)}{p+1}\|u\|_{L^{p+1}(|x| \geq R)}^{p+1} + \frac{C_0}{R^2}\|u\|_{L^2}^2.
\end{align*}
\begin{align*}
	- \frac{d}{dt}I_R^2(t)
		& = - \frac{d}{dt}I_R^1(t) - \left(\frac{4}{p-1} - d\right)\frac{d}{dt}\text{Re}\int_{\mathbb{R}^d}u(t,x)\overline{\partial_tu(t,x)}dx \\
		& = - \frac{d}{dt}I_R^1(t) - \left(\frac{4}{p-1} - d\right)\left\{\|\partial_tu\|_{L^2}^2 + \text{Re}\int_{\mathbb{R}^d}u\overline{\partial_t^2u}dx\right\} \\
		& = - \frac{d}{dt}I_R^1(t) - \left(\frac{4}{p-1} - d\right)\left\{\|\partial_tu\|_{L^2}^2 + \text{Re}\int_{\mathbb{R}^d}\overline{u}(\Delta_\gamma u - u + |u|^{p-1}u)dx\right\} \\
		& = - \frac{d}{dt}I_R^1(t) - \left(\frac{4}{p-1} - d\right)(\|\partial_tu\|_{L^2}^2 - \|(-\Delta_\gamma)^\frac{1}{2}u\|_{L^2}^2 - \|u\|_{L^2}^2 + \|u\|_{L^{p+1}}^{p+1}) \\
		& = - \frac{d}{dt}I_R^1(t) + \left(\frac{4}{p-1} - d\right)K_1(u,\partial_tu) \\
		& \leq K(u,\partial_tu) + \frac{d(p-1)}{p+1}\|u\|_{L^{p+1}(|x| \geq R)}^{p+1} + \frac{C_0}{R^2}\|u\|_{L^2}^2.
\end{align*}
\end{proof}

We consider the initial data $(\lambda Q_{\omega,\gamma},\lambda i\omega Q_{\omega,\gamma})$ for $\lambda > 1$.
Let $(u_\lambda,\partial_tu_\lambda)$ be the solution to \eqref{NLKG} with the initial data $(\lambda Q_{\omega,\gamma},\lambda i\omega Q_{\omega,\gamma})$.
Let $[0,T_\lambda)$ be the positive maximal life-span of $(u_\lambda,\partial_tu_\lambda)$.\\

\noindent
8.1. \textbf{Case $1 < p < 1 + \frac{4}{d}$.}

\begin{proof}[Proof of very strong instability for $1 < p < 1 + \frac{4}{d}$ in Theorem \ref{Instability versus stability}]
For any $\varepsilon > 0$, we take $\lambda_0 > 1$ satisfying
\begin{align*}
	\|(\lambda_0Q_{\omega,\gamma},\lambda_0i\omega Q_{\omega,\gamma}) - (Q_{\omega,\gamma},i\omega Q_{\omega,\gamma})\|_{H^1 \times L^2}
		= \varepsilon.
\end{align*}
We consider initial data $(\lambda Q_{\omega,\gamma}, \lambda i\omega Q_{\omega,\gamma})$ for $\lambda \in (1,\lambda_0)$.

First, we prove very strong instability of $\mathcal{G}_{\omega,\gamma,\text{rad}}$.
Set
\begin{align}
	\delta_1
		& := (q+2)\left\{r_{\omega,\gamma}^{2,p-1} - L_{\omega,\gamma}(\lambda Q_{\omega,\gamma},\lambda i\omega Q_{\omega,\gamma})\right\}, \label{136} \\
	\delta_2
		& := q\left\{- \omega C(\lambda Q_{\omega,\gamma},\lambda i\omega Q_{\omega,\gamma}) - \frac{(q+2)\omega^2}{1 - \omega^2}r_{\omega,\gamma}^{2,p-1}\right\}, \label{137}
\end{align}
and $\delta = \delta_1 + \delta_2$.
We note that $\delta_1 > 0$ by Lemma \ref{Estimates for the ground state}.
From Proposition \ref{Characterization of the ground state in mass-subcritical 1}, we have
\begin{align*}
	\frac{(q+2)\omega^2}{1 - \omega^2}r_{\omega,\gamma}^{2,p-1}
		= \omega^2\|Q_{\omega,\gamma}\|_{L^2}^2
		< \lambda^2\omega^2\|Q_{\omega,\gamma}\|_{L^2}^2
		= - \omega C(\lambda Q_{\omega,\gamma},\lambda i\omega Q_{\omega,\gamma}),
\end{align*}
that is, $\delta_2 > 0$ and $\delta > 0$ hold.
We assume for contradiction that there exists $\lambda_1 \in (1,\lambda_0)$ such that a solution $(u_{\lambda_1},\partial_tu_{\lambda_1})$ to \eqref{NLKG} with data $(\lambda_1 Q_{\omega,\gamma},\lambda_1 i\omega Q_{\omega,\gamma})$ satisfies
\begin{align}
	M_1
		:= \sup_{t \geq 0}\|(u_{\lambda_1}(t),\partial_tu_{\lambda_1}(t))\|_{H_x^1 \times L_x^2}
		< \infty. \label{134}
\end{align}
From Lemma \ref{Radial Sobolev inequality} and \eqref{134},
\begin{align*}
	\|u_{\lambda_1}(t)\|_{L^{p+1}(|x| \geq R)}^{p+1}
		\leq CR^{-\frac{(d-1)(p-1)}{2}}\|u_{\lambda_1}(t)\|_{L^2(|x| \geq R)}^\frac{p+3}{2}\|\nabla u_{\lambda_1}(t)\|_{L^2(|x| \geq R)}^\frac{p-1}{2}
		\leq CM_1^{p+1}R^{-\frac{(d-1)(p-1)}{2}}
\end{align*}
for any $t \geq 0$ and $R > 0$.
We take a positive constant $R_0$ such that
\begin{align}
	\sup_{t \geq 0}\left\{\frac{d(p-1)}{p+1}\|u_{\lambda_1}(t)\|_{L^{p+1}(|x| \geq R_0)}^{p+1} + \frac{C_0}{R_0^2}\|u_{\lambda_1}(t)\|_{L^2}^2\right\}
		< \delta. \label{135}
\end{align}
Since $(u_{\lambda_1},\partial_tu_{\lambda_1})$ is radially symmetric with respect to $x$ for any $t \geq 0$, we have
\begin{align*}
	\frac{d}{dt}I_{R_0}^2(t)
		& \geq - K(u_{\lambda_1}(t),\partial_tu_{\lambda_1}(t)) - \frac{d(p-1)}{p+1}\|u_{\lambda_1}(t)\|_{L^{p+1}(|x| \geq R_0)}^{p+1} - \frac{C_0}{R_0^2}\|u_{\lambda_1}(t)\|_{L^2}^2 \\
		& \geq - K(u_{\lambda_1}(t),\partial_tu_{\lambda_1}(t)) - \delta \\
		& \geq - 2(q+2)L_{\omega,\gamma}(u_{\lambda_1}(t),\partial_tu_{\lambda_1}(t)) - 2q\omega C(u_{\lambda_1}(t),\partial_tu_{\lambda_1}(t)) \\
		& \hspace{2.5cm} + 2\{1 - (q+1)\omega^2\}\|u_{\lambda_1}(t)\|_{L^2}^2 - \delta \\
		& \geq - 2(q+2)L_{\omega,\gamma}(\lambda Q_{\omega,\gamma},\lambda i\omega Q_{\omega,\gamma}) - 2q\omega C(\lambda Q_{\omega,\gamma},\lambda i\omega Q_{\omega,\gamma}) \\
		& \hspace{2.5cm} + \frac{2(q+2)\{1 - (q+1)\omega^2\}}{1 - \omega^2}r_{\omega,\gamma}^{2,p-1} - \delta \\
		& = 2\delta - \delta
		= \delta
		> 0
\end{align*}
for any $t \geq 0$ by using \eqref{135}, \eqref{136}, and \eqref{137}, which implies $\lim_{t \rightarrow \infty}I_R^2(t) = \infty$.
On the other hand,
\begin{align*}
	I_{R_0}^2(t)
		\lesssim I_{R_0}^1(t) + \|u_{\lambda_1}(t)\|_{L^2}\|\partial_tu_{\lambda_1}(t)\|_{L^2}
		\lesssim M_1^2
		< \infty
\end{align*}
for any $t \geq 0$.
This is contradiction.
Therefore, for any $\lambda \in (1,\lambda_0)$, the solution $(u,\partial_tu)$ to \eqref{NLKG} with the data $(\lambda Q_{\omega,\gamma},\lambda i\omega Q_{\omega,\gamma})$ either blows up in finite time or ``exists globally in time and $\limsup_{t \rightarrow \infty}\|(u(t),\partial_tu(t))\|_{H_x^1 \times L_x^2} = \infty$''.
Theorem \ref{Global implies boundedness} deduces very strong instability of $\mathcal{G}_{\omega,\gamma,\text{rad}}$.

Next, we prove very strong instability of $\mathcal{A}_{\omega,\gamma,\text{rad}}$.
Let $|\omega| = \omega_c$.
We set
\begin{align*}
	\delta
		= - q\omega_cC(\lambda Q_{{\omega_c},\gamma},\lambda i\omega_cQ_{{\omega_c},\gamma}) - (q+2)(E + \omega_cC)(\lambda Q_{{\omega_c},\gamma},\lambda i\omega_cQ_{{\omega_c},\gamma}).
\end{align*}
From $S_{\omega_c,\gamma}'(Q_{{\omega_c},\gamma}) = 0$, we have $(E + \omega_cC)(\lambda Q_{{\omega_c},\gamma},\lambda i\omega_cQ_{{\omega_c},\gamma}) = S_{\omega_c}(\lambda Q_{\omega_c,\gamma}) < S_{\omega_c}(Q_{\omega_c,\gamma})$ for $\lambda > 1$.
$- C(\lambda Q_{{\omega_c},\gamma},\lambda i\omega_cQ_{{\omega_c},\gamma}) = \omega_c\lambda^2\|Q_{\omega_c,\gamma}\|_{L^2}^2 > \omega_c\|Q_{\omega_c,\gamma}\|_{L^2}^2$ holds for $\lambda > 1$.
Therefore, it follows that
\begin{align*}
	\delta
		> q\omega_c^2\|Q_{\omega_c,\gamma}\|_{L^2}^2 - (q+2)S_{\omega_c}(Q_{\omega_c,\gamma})
		= - \frac{1}{p-1}K_{\omega_c,\gamma}^{2,p-1}(Q_{\omega_c,\gamma})
		= 0.
\end{align*}
Here, we assume for contradiction that there exists $\lambda \in (1,\lambda_0)$ such that the solution $(u_\lambda,\partial_tu_\lambda)$ to \eqref{NLKG} with initial data $(\lambda Q_{{\omega_c},\gamma},\lambda i\omega_cQ_{{\omega_c},\gamma})$ satisfies \eqref{134}.
By the same argument with $\mathcal{G}_{\omega,\gamma,\text{rad}}$ in this proof, we obtain $\frac{d}{dt}I_R^2(t) \geq \delta > 0$ for sufficiently large $R > 0$.
This is contradiction.
\end{proof}

\noindent
8.2. \textbf{Case $1 + \frac{4}{d} \leq p < 1 + \frac{4}{d-2}$.}

\begin{proof}[Proof of very strong instability for $1 + \frac{4}{d} \leq p < 1 + \frac{4}{d-2}$ in Theorem \ref{Instability versus stability}]
For any $\varepsilon > 0$, we take $\lambda_0 > 1$ satisfying
\begin{align*}
	\|(\lambda_0 Q_{\omega,\gamma},\lambda_0 i\omega Q_{\omega,\gamma}) - (Q_{\omega,\gamma},i\omega  Q_{\omega,\gamma})\|_{H^1 \times L^2}
		= \varepsilon.
\end{align*}
We consider initial data $(\lambda Q_{\omega,\gamma}, \lambda i\omega Q_{\omega,\gamma})$ for $\lambda \in (1,\lambda_0)$.
Set
\begin{align}
	\delta
		:= \frac{d(p-1)}{2}\left\{r_{\omega,\gamma}^{d,2} - L_{\omega,\gamma}(\lambda Q_{\omega,\gamma},\lambda i\omega Q_{\omega,\gamma})\right\}. \label{105}
\end{align}
We note that $\delta > 0$ by Lemma \ref{Estimates for the ground state} (2).
We assume for contradiction that there exists $\lambda_1 \in (1,\lambda_0)$ such that a solution $(u_{\lambda_1},\partial_tu_{\lambda_1})$ to \eqref{NLKG} with data $(\lambda_1 Q_{\omega,\gamma},\lambda_1 i\omega Q_{\omega,\gamma})$ satisfies
\begin{align}
	M_1
		:= \sup_{t \geq 0}\|(u_{\lambda_1}(t),\partial_tu_{\lambda_1}(t))\|_{H_x^1 \times L_x^2}
		< \infty. \label{102}
\end{align}
From Lemma \ref{Radial Sobolev inequality} and \eqref{102},
\begin{align*}
	\|u_{\lambda_1}(t)\|_{L^{p+1}(|x| \geq R)}^{p+1}
		\leq CR^{-\frac{(d-1)(p-1)}{2}}\|u_{\lambda_1}(t)\|_{L^2(|x| \geq R)}^\frac{p+3}{2}\|\nabla u_{\lambda_1}(t)\|_{L^2(|x| \geq R)}^\frac{p-1}{2}
		\leq CM_1^{p+1}R^{-\frac{(d-1)(p-1)}{2}}
\end{align*}
for any $t \geq 0$ and $R > 0$.
We take a positive constant $R_0$ such that
\begin{align}
	\sup_{t \geq 0}\left\{\frac{d(p-1)}{p+1}\|u_{\lambda_1}(t)\|_{L^{p+1}(|x| \geq R_0)}^{p+1} + \frac{C_0}{R_0^2}\|u_{\lambda_1}(t)\|_{L^2}^2\right\}
		< \delta. \label{104}
\end{align}
Since $(u_{\lambda_1},\partial_tu_{\lambda_1})$ is radially symmetric with respect to $x$ for any $t \geq 0$, we have
\begin{align*}
	\frac{d}{dt}I_R^1(t)
		& \geq - K_{\omega,\gamma}^{d,2}(u_{\lambda_1}(t)) - \frac{d(p-1)}{p+1}\|u_{\lambda_1}(t)\|_{L^{p+1}(|x| \geq R)}^{p+1} - \frac{C_0}{R^2}\|u_{\lambda_1}(t)\|_{L^2}^2 \\
		& \geq 2\delta - \delta
		= \delta
		> 0
\end{align*}
for any $t \geq 0$ by using Lemmas \ref{Estimate for virial functional}, \ref{Invariant set}, \ref{Estimate of virial}, \eqref{105}, and \eqref{104}.
This inequality implies $\lim_{t \rightarrow \infty}I_R^1(t) = \infty$.
On the other hand,
\begin{align*}
	I_{R_0}^1(t)
		\lesssim \|\Psi_{R_0}\|_{L^\infty}\|\nabla u_{\lambda_1}(t)\|_{L_x^2}\|\partial_tu_{\lambda_1}(t)\|_{L_x^2} + \|\Phi_{R_0}\|_{L^\infty}\|u_{\lambda_1}(t)\|_{L_x^2}\|\partial_tu_{\lambda_1}(t)\|_{L_x^2}
		\lesssim M_1^2
		< \infty
\end{align*}
for any $t \geq 0$.
This is contradiction.
Therefore, for any $\lambda \in (1,\lambda_0)$, the solution $(u,\partial_tu)$ to \eqref{NLKG} with the data $(\lambda Q_{\omega,\gamma},\lambda i\omega Q_{\omega,\gamma})$ either blows up in finite time or ``exists globally in time and $\limsup_{t \rightarrow \infty}\|(u,\partial_tu)\|_{H^1 \times L^2} = \infty$''.
Proposition \ref{Uniform estimate of global solution} implies that there exists a positive constant $J$ such that
\begin{align*}
	\int_T^{T+1}\int_{\mathbb{R}^d}(|u(t)|^2+|\nabla u(t)|^2+|\partial_tu(t)|^2)dxdt
		\leq J
\end{align*}
for each $T$.
By the mean value theorem, for any $j \in \mathbb{N} \cup \{0\}$, there exists $T_j \in [j,j+1]$ such that
\begin{align*}
	\int_{\mathbb{R}^d}(|u(T_j)|^2 + |\nabla u(T_j)|^2 + |\partial_tu(T_j)|^2)dx
		\leq J.
\end{align*}
We define a function $J_R(t)$ as
\begin{align*}
	\frac{d}{dt}I_R^1(t)
		\geq 2\delta - \frac{d(p-1)}{p+1}\|u(t)\|_{L^{p+1}(|x| \geq R)}^{p+1} - \frac{C}{R^2}\|u(t)\|_{L^2}^2
		=: 2\delta - J_R(t).
\end{align*}
Integrating this inequality on $[T_j,T_{j+2}]$, we have
\begin{align}
	I_R^1(T_{j+2}) - I_R^1(T_j)
		\geq 2\delta(T_{j+2} - T_j) - \int_{T_j}^{T_{j+2}}J_R(t)dt
		\geq 2\delta - \int_{T_j}^{T_{j+2}}J_R(t)dt \label{139}
\end{align}
for any $j \in \mathbb{N} \cup \{0\}$.
We take a function $\chi(t,r) \in C^\infty(\mathbb{R} \times \mathbb{R})$ satisfying $\chi(t,r) = 1$ on $\{(t,r) : |t| \leq 2\text{ and }|r| \geq 1\}$ and $\chi(t,r) = 0$ on $\{(t,r) : |t| \geq 4\text{ or }|r| \leq \frac{1}{2}\}$.
For $R > 1$ and $T > 4$, we set $v_k(t,r) = \chi(t-T,\frac{r}{2^kR})u(t,|r|)$.
Applying the following inequality
\begin{align*}
	\int_{\mathbb{R}^2}(|\partial_tv_k|^2 + |\partial_rv_k|^2 + |v_k|^2)drdt
		& = \int_{T-4}^{T+4}\int_{|r| \geq 2^{k-1}R}(|\partial_tv_k|^2 + |\partial_rv_k|^2 + |v_k|^2)drdt \\
		& \leq C\int_{T-4}^{T+4}\int_{2^{k-1}R}^\infty(|\partial_tu|^2 + |\partial_ru|^2 + |u|^2)drdt \\
		& \leq C(2^{k-1}R)^{1-d}\int_{T-4}^{T+4}\int_{|x| \geq 2^{k-1}R}(|\partial_tu|^2 + |\nabla u|^2 + |u|^2)dxdt \\
		& \leq 8C(2^{k-1}R)^{1-d}J,
\end{align*}
we have
\begin{align*}
	\int_{T-2}^{T+2}\int_{|x| \geq R}|u(t,x)|^{p+1}dxdt
		& = C\sum_{k=0}^\infty \int_{T-2}^{T+2}\int_{2^kR \leq r \leq 2^{k+1}R}|v_k(t,r)|^{p+1}r^{d-1}drdt \\
		& \leq C\sum_{k=0}^\infty (2^{k+1}R)^{d-1} \int_\mathbb{R}\int_\mathbb{R}|v_k(t,r)|^{p+1}drdt \\
		& \leq C\sum_{k=0}^\infty (2^{k+1}R)^{d-1} \left(\int_{\mathbb{R}^2}|\partial_tv_k|^2 + |\partial_rv_k|^2 + |v_k|^2drdt\right)^\frac{p+1}{2} \\
		& \leq C\sum_{k=0}^\infty (2^{k+1}R)^{d-1}\cdot\{8C(2^{k-1}R)^{1-d}J\}^\frac{p+1}{2} \\
		& = CJ^\frac{p+1}{2}R^{-\frac{(d-1)(p-1)}{2}}\sum_{k=0}^\infty 2^{-\frac{k(d-1)(p-1)}{2}}.
\end{align*}
There exists $R_0 > 0$ such that
\begin{align*}
	\int_{T_j}^{T_j + 2}J_{R_0}(t)dt
		< \delta
\end{align*}
for any $j \geq 4$.
Combining this inequality and \eqref{139}, we have $I_{R_0}^1(T_{j+2}) - I_{R_0}^1(T_j) \geq \delta$ for any $j \geq 4$.
This inequality deduces that $\lim_{j \rightarrow \infty}I_{R_0}^1(T_j) = \infty$.
On the other hand, we have $I_{R_0}^1(T_j) \leq CR_0J$ for any $j \geq 1$.
This is contradiction.
\end{proof}

\subsection*{Acknowledgements}
The authors would like to express deep appreciation to Professor Noriyoshi Fukaya for teaching about the detail of proof for Proposition \ref{Regularity of the ground state}.
M.H. is supported by JSPS KAKENHI Grant Number JP19J13300.
M.I. is supported by JSPS KAKENHI Grant Number JP18H01132, JP19K14581, and JST CREST Grant Number JPMJCR1913.

\end{document}